\renewcommand\labelenumi{(\roman{enumi})}
\renewcommand\theenumi\labelenumi
\providecommand\@dotsep{5}
\newcommand{\bbB}{\mathbb B}
\newcommand{\bbC}{\mathbb C}
\newcommand{\bbH}{\mathbb H}
\newcommand{\bbR}{\mathbb R}
\newcommand{\bbW}{\mathbb W}
\newcommand{\bbZ}{\mathbb Z}
\newcommand{\cA}{\mathcal A}
\newcommand{\cB}{\mathcal B}
\newcommand{\cD}{\mathcal D}
\newcommand{\cF}{\mathcal F}
\newcommand{\cH}{\mathcal H}
\newcommand{\cK}{\mathcal K}
\newcommand{\cP}{\mathcal P}
\newcommand{\cQ}{\mathcal Q}
\newcommand{\cZ}{\mathcal Z}
\newcommand{\frW}{\mathfrak{W}}
\newcommand{\<}{\langle}
\renewcommand{\>}{\rangle}
\newcommand{\wt}{\widetilde}
\DeclareMathOperator{\id}{id}
\DeclareMathOperator{\Z}{\mathbb Z}
\DeclareMathOperator{\R}{\mathbb R}
\DeclareMathOperator{\C}{\mathbb C}
\DeclareMathOperator{\ra}{\rightarrow}
\newcommand{\xra}{\xrightarrow}
\DeclareMathOperator{\cone}{cone}
\DeclareMathOperator{\Hom}{Hom}
\DeclareMathOperator{\Stab}{Stab}
\DeclareMathOperator{\Kom}{Kom}
\DeclareMathOperator{\gr}{gr}
\declaretheorem[numberwithin=section]{theorem}
\declaretheorem[sibling=theorem]{lemma}
\declaretheorem[sibling=theorem]{corollary}
\declaretheorem[sibling=theorem]{proposition}
\declaretheorem[sibling=theorem, style=remark]{remark}
\declaretheorem[sibling=theorem, style=definition]{definition}
\declaretheorem[sibling=theorem, style=definition]{example}
\declaretheorem[sibling=theorem, style=definition]{exercise}
\newtheorem*{theorem*}{Theorem}
\crefname{lemma}{Lemma}{Lemma}
  \crefname{corollary}{Corollary}{Corollary}
  \crefname{theorem}{Theorem}{Theorem}
  \crefname{definition}{Definition}{Definition}
   \crefname{proposition}{Proposition}{Proposition}
 \crefname{section}{Section}{Section} 
   \crefname{construction}{Construction}{Construction}
   \crefname{generalization}{Generalization}{Generalization}
  \crefname{construction}{Construction}{Construction}
  \crefname{notation}{Notation}{Notation}
   \crefname{example}{Example}{Example}
  \crefname{remark}{Remark}{Remark}
  \crefname{fact}{Fact}{Fact}
  \crefname{conjecture}{Conjecture}{Conjecture}
  \crefname{motivation}{Motivation}{Motivation}  
  \crefname{figure}{Figure}{Figure}  
  \crefname{assumption}{Assumption}{Assumption}
  \crefname{exercise}{Exercise}{Exercise}
\renewcommand{\comment}[1]{}
\begin{document}

\title[Intro to stability conditions and $K(\pi,1)$]{Introduction to stability conditions and its relation to the $K(\pi,1)$ conjecture for Artin groups}

\author[]{Edmund Heng}
\address{Institut des Hautes Etudes Scientifiques (IHES). Le Bois-Marie, 35, route de Chartres, 91440 Bures-sur-Yvette (France)}
\email{heng@ihes.fr}

\begin{abstract}
We give a brief introduction to the relationship between stability conditions and the $K(\pi,1)$ conjecture for Artin groups.
These notes have been written as pre-reading for the MFO mini-workshop 2405a \emph{Artin groups meet triangulated categories}.
\end{abstract}

\maketitle

\tableofcontents

\section{Introduction}
The aim of these notes is to give a brief introduction to Bridgeland stability conditions and to discuss their relation to the $K(\pi,1)$ conjecture for Artin groups.
The main take away is as follows: for each simply-laced\footnote{The non-simply-laced case will be briefly discussed in \cref{rem:nonsimplylaced}.} Coxeter graph $\Gamma$, the space of stability conditions $\Stab(\cK_\Gamma)$ (of some triangulated category $\cK_\Gamma$ associated to $\Gamma$) is a covering space of the hyperplane complement associated to $\Gamma$.
The general conjecture is that spaces of stability conditions are always contractible, which implies the $K(\pi,1)$ conjecture. 
In fact, for $\Gamma$ = ADE, the contractibility of $\Stab(\cK_\Gamma)$ can be proven independently of Deligne's result. 

These notes are organised as follows.
In \cref{sec:admissiblewalks}, we associate a category of admissible walks $\wt{\Gamma}$ to each simply-laced Coxeter graph $\Gamma$. 
In \cref{sec:homotopycat}, we introduce the (formal) additive completion of $\widetilde{\Gamma}$ into an additive category $\cA_{\Gamma}$, whose homotopy category $\cK_{\Gamma}\coloneqq \Kom^b(\cA_{\Gamma})$ will be the triangulated category of interest.
In \cref{sec:artinact}, we discuss the action of the Artin group $\bbB(\Gamma)$ on the triangulated category $\cK_\Gamma$.
In \cref{sec:triangulated} and \cref{sec:heartlinear}, we discuss some important structures associated to $\cK_\Gamma$ (triangulated structure and hearts respectively). 
We then introduce stability conditions and the complex manifold structure on the space of stability condition $\Stab(\cD)$ in \cref{sec:stab}.
Finally in \cref{sec:coveringstab}, we see that $\Stab(\cK_{\Gamma})$ is a covering space for the (quotient of) hyperplane complement associated to the Coxeter group $\bbW(\Gamma)$ for $\Gamma = $ ADE, with $\bbB(\Gamma)$ acting via deck transformations.
The fact that $\Stab(\cK_{\Gamma})$ is contractible then gives a new proof of the $K(\pi,1)$ conjecture.
We briefly discuss the cases of non-finite types and non-simply-laced types $\Gamma$ at the end of the section.

\subsection*{Acknowledgement} 
These notes were written for the Oberwolfach mini-workshop 2405a: \emph{Artin groups meet triangulated categories}.
A companion set of notes can be found in \cite{Boyd_MFOnotes}.
The corresponding Oberwolfach Report is \cite{MFOreport}.

The author would like to thank the workshop co-organisers, Rachael Boyd and Viktoriya Ozornova, for their wonderful support throughout the organisation and preparation of this workshop.
The author would also like to thank the participants for their feedback on the draft(s) of these notes.
Special thanks to Jon McCammond and Tony Licata for running the lecture series of this workshop.
The workshop organisers would like to thank MFO for the opportunity to organise a mini-workshop on this topic.
The MFO and the workshop organisers would like to thank the National Science Foundation for supporting the participation of junior researchers in the workshop by the grant DMS-2230648, ``US Junior Oberwolfach Fellows''.

\section{The category of admissible walks on graphs}\label{sec:admissiblewalks}
Let $\Gamma$ be a simply-laced Coxeter graph\footnote{We use the Coxeter graph convention where two vertices $i,j$ not connected by an edge means their Coxeter generators commute in the Coxeter group.}; this means that all edges in our graph are unlabelled and between any two vertices there is at most one edge connecting them.
We denote its set of vertices by $\Gamma_0$ and its set of edges $\Gamma_1$.
Since $\Gamma$ is simply-laced, if $i,j \in \Gamma_0$ are connected by an edge, we shall use $(i,j)=(j,i)$ to denote the edge in $\Gamma_1$.

A \emph{walk} on a graph is a finite sequence of edges which joins a sequence of vertices (we allow the same edge to be used more than once).
Given an edge $(i,j) \in \Gamma_1$, we shall use $(i|j)$ to denote the (length $1$) walk from from vertex $i$ to vertex $j$, and conversely $(j|i)$ will denote the opposite walk from vertex $j$ to vertex $i$.
We warn the reader that while both walks use the same edge $(i,j)=(j,i)$, they are different as walks, i.e.\ $(i|j) \neq (j|i)$.
We will use the function composition convention for walks -- the sequence of edges is read from right to left.
For example, the walk from $i$ to $j$ and then $j$ to $k$ is denoted by $(j|k)(i|j)$ (this convention will make our lives easier later on).
By definition, we shall also allow the \emph{constant walk} on a vertex $i$: it is denoted by $e_i$ and it has length 0 by definition.
Nonetheless, we shall not differentiate between walks that have extra constant walks sandwiched between; namely for any walk $p$ starting at $i$ and ending at $j$, we have
\[
pe_i = p = e_jp \text{ as walks}.
\]
In particular, $e_i e_i = e_i$.

By definition, a walk $p$ on $\Gamma$ is said to be \emph{admissible} if it satisfies (both of) the following conditions:
\begin{itemize}
\item the length of $p$ $\leq 2$; and
\item if $p$ has length 2, then it must start and end at the same vertex.
\end{itemize}
Note that this includes the constant walks $e_i$'s.
As such, starting from a vertex $i$, there are only three types of admissible walks:
\begin{enumerate}
\item[(length 0)] the constant walk $e_i$;
\item[(length 1)] for each neighbouring vertex $j$ (i.e.\ $(i,j) \in \Gamma_1$), we have $(i|j)$; and 
\item[(length 2)] for each neighbouring vertex $j$, we have $(j|i)(i|j)$ (a closed walk at $i$).
\end{enumerate}
On the set of admissible walks, we define the following equivalence relation:
\[
\text{for each } i \in \Gamma_0, \quad (j|i)(i|j) \sim (k|i)(i|k).
\]
In other words, for each vertex $i$, all the closed walks (of length 2) at $i$ fall into the same equivalence class, which we shall denote by $X_i$.
The composition of two equivalence classes of admissible walks can still be defined: we compose them as walks by picking \emph{any} representative -- the caveat is that if the composition is no longer admissible, we send it to the empty set.
For example, the following compositions are all sent to the empty set:
\begin{enumerate}
    \item $(j|k)(i|j)$ for $i \neq k$ (length 2 with different starting and ending vertex);
    \item $(i|j)X_i$ and $X_i(j|i)$ (length $> 2$).
\end{enumerate}
Abusing notation, the equivalence classes of admissible walks will still be called admissible walks.

\begin{definition}
We associated to $\Gamma$ a category $\wt{\Gamma}$ as follows.
\begin{itemize}
\item The objects are $P_i\<m\>$ for each $i \in \Gamma_0$ and $m \in \Z$. When $m=0$, we will simply write $P_i \coloneqq P_i\<0\>$.
\item For each pair of vertices $i, j \in \Gamma_0$ (not necessarily distinct), the space of morphisms
$\hom(P_i\<m\>, P_j\<n\>)$ from $P_i\<m\>$ to $P_j\<n\>$ is given by the formal $\bbC$-linear span of admissible walks from $i$ to $j$ of length $m-n$ (zero if empty):
\begin{equation} \label{eq:admissiblepaths}
\hom(P_i\<m\>, P_j\<n\>) \coloneqq \begin{cases}
	\bbC \cdot e_i, &\text{ if } i=j, m=n; \\
	\bbC \cdot X_i, &\text{ if } i=j, m-n=2; \\
	\bbC \cdot (i|j), &\text{ if } (i,j)=(j,i) \in \Gamma_1, m-n=1; \\
	\bbC \cdot \emptyset \coloneqq 0, &\text{ otherwise}.
	\end{cases}
\end{equation}
\end{itemize}
\end{definition}
The composition of morphisms is as in composition of admissible walks (the scalars simply multiply); if the composition is no longer admissible, it is (by definition) zero.

\section{The homotopy category of complexes \texorpdfstring{$\cK_{\Gamma}$}{K}} \label{sec:homotopycat}
Recall that a (cochain) complex (in some additive category $\cA$) is a sequence of morphisms
\[
\cdots \xra{d_{-2}} C^{-1} \xra{d_{-1}} C^0 \xra{d_0} C^1 \xra{d_1} C^2 \xra{d_2} \cdots
\]
such that $d_{i+1} d_i = 0$ ($d^2 = 0$).
For our purposes, all complexes will be \emph{bounded}, so that $C^i = 0 = C^{-i}$ for large $i \gg 0$.
A morphism of complexes from $(C^i, d_i)$ to $(D^i, d'_i)$ is a collection of morphisms $f_i\colon C^i \ra D^i$ for each $i \in \Z$ such that all squares in the following diagram commute:
\begin{equation} \label{eq:morphcomplex}
\begin{tikzcd}
\cdots \ar[r, "d_{-2}"] & C^{-1} \ar[r, "d_{-1}"] \ar[d, "f_{-1}"] & C^0 \ar[r, "d_0"] \ar[d, "f_0"] & C^1 \ar[r, "d_1"]  \ar[d, "f_1"] & C^2 \ar[r, "d_2"]  \ar[d, "f_2"] & \cdots \\
\cdots \ar[r, "d'_{-2}"] & D^{-1} \ar[r, "d'_{-1}"] & D^0 \ar[r, "d'_0"] & D^1 \ar[r, "d'_1"] & D^2 \ar[r, "d'_2"] & \cdots 
\end{tikzcd}
\end{equation}

A \emph{homotopy} of complexes from $(C^i, d_i)$ to $(D^i, d'_i)$ is a collection of (degree $-1$) morphisms $h_i\colon C^i \ra D^{i-1}$ (warning: no commutativity requirement!):
\[
\begin{tikzcd}
\cdots \ar[r, "d_{-2}"] & C^{-1} \ar[r, "d_{-1}"] & C^0 \ar[r, "d_0"] \ar[dl, "h_0", swap] & C^1 \ar[r, "d_1"]  \ar[dl, "h_1", swap] & C^2 \ar[r, "d_2"]  \ar[dl, "h_2", swap] & \cdots \\
\cdots \ar[r, "d'_{-2}"] & D^{-1} \ar[r, "d'_{-1}"] & D^0 \ar[r, "d'_0"] & D^1 \ar[r, "d'_1"] & D^2 \ar[r, "d'_2"] & \cdots 
\end{tikzcd}
\]
Note that $d'_{i-1} h_i + h_{i+1} d_i$ is a morphism from $C^i$ to $D^i$.
Two morphisms of complexes $(f_i), (f'_i)\colon (C^i, d_i) \ra (D^i, d'_i)$ are \emph{homotopic} if there exists a homotopy $(h_i)$ such that 
\[
f_i + d'_{i-1} h_i + h_{i+1} d_i = f'_i
\]
for all $i \in \bbZ$.
In particular, a morphism of complexes $(f_i)\colon (C^i, d_i) \ra (D^i, d'_i)$ is a \emph{homotopy equivalence} if there exists a morphism of complexes $(g_i)\colon (D^i, d'_i) \ra (C^i, d_i)$ such that
\[
g_i f_i + d_{i-1} h_i + h_{i+1} d_i = \id_{C^i}, \quad f_i g_i + d'_{i-1} h'_i + h'_{i+1} d'_i = \id_{D^i}
\]
for some homotopies $(h_i)$ and $(h'_i)$.

The \emph{cone} of a morphism $f\coloneqq (f_i)\colon (C^i, d_i) \ra (D^i, d'_i)$ is a complex defined by
\begin{equation} \label{eq:cone}
\cone(f)\coloneqq
\begin{tikzcd}
    \cdots                        \ar[r, "-d_{-1}"] \ar[rd, "f_{-1}"description] & 
	C^0 \ar[d, phantom, "\oplus"] \ar[r, "-d_0"] \ar[rd, "f_0"description] & 
	C^1 \ar[d, phantom, "\oplus"] \ar[r, "-d_1"] \ar[rd, "f_1"description] & 
	C^2 \ar[d, phantom, "\oplus"] \ar[r, "-d_2"] \ar[rd, "f_2"description] & 
	C^3 \ar[d, phantom, "\oplus"] \ar[r, "-d_3"] \ar[rd, "f_3"description] & 
	\cdots \\
\cdots \ar[r, "d'_{-2}"] & D^{-1} \ar[r, "d'_{-1}"] & D^0 \ar[r, "d'_0"] & D^1 \ar[r, "d'_1"] & D^2 \ar[r, "d'_2"] & \cdots 
\end{tikzcd},
\end{equation}
where $C^1 \oplus D^0$ sits in cohomological degree zero.
Notice the negative signs on the $d_i$'s; these ensure $\cone(f)$ is indeed a complex.

We would like to consider (the homotopy category of) complexes of $\wt{\Gamma}$, but there is no zero object in $\wt{\Gamma}$. 
Moreover, taking `cone' will not make sense: we can't take direct sums in $\wt{\Gamma}$.
Luckily, there is an easy fix by formally adding direct sums of objects as follows.
\begin{definition}
We define the category $\cA_{\Gamma}$ to be the additive
completion of $\wt{\Gamma}$.
The objects are formal finite direct sums $\oplus_i x_i$ of objects $x_i$ in $\wt{\Gamma}$ (the $x_i$'s need not be distinct); the empty direct sum is by definition the zero object $0$.

The morphism spaces are defined as follows:
\[
\Hom_{\cA_{\Gamma}}(\oplus_{i=1}^a x_i, \oplus_{j=1}^b y_j) \coloneqq \bigoplus_{i,j} \hom(x_i, y_j),
\]
where each $f\coloneqq(f_{j,i})\colon \oplus_{i=1}^a x_i \ra \oplus_{j=1}^b y_j$ is written as an $b\times a$ matrix whose columns are indexed by $x_i$ and rows are indexed by $y_j$, with entries $f_{j,i} \in \hom(x_i, y_j)$.
The composition and addition of morphisms are done following the rule of matrices.
\end{definition}
\begin{remark}
We warn the reader that $\cA_\Gamma$ is only an additive ($\bbC$-linear) category and it is \emph{not} an abelian category.
\end{remark}
\begin{remark}
The additive category $\cA_\Gamma$ can be realised as the category of graded, (finitely generated) projective modules over the zigzag algebra associated to $\Gamma$ (with the path length grading); see \cite{HueKho, LQ_ADE}.
\end{remark}
\begin{example}
Let $\Gamma = A_2 = \begin{tikzcd}[column sep=small] 1 \ar[r,no head] & 2 \end{tikzcd}$.
Consider the two morphisms in $\cA_{A_2}$:
\[
	{P_1\<3\>\oplus P_1\<1\>}
	    \xra{\begin{bsmallmatrix} X_1 & e_1 \\ 0 & (1|2) \end{bsmallmatrix}}
	{P_1\<1\>\oplus P_2}, \quad
	{P_1\<1\>\oplus P_2}
	    \xra{\begin{bsmallmatrix} -X_1 & (2|1) \\ (1|2) & e_2 \end{bsmallmatrix}}
	{P_1\<-1\>\oplus P_2}.
\]
Their composition is given by
\begin{align*}
\begin{bmatrix} -X_1 & (2|1) \\ (1|2) & e_2 \end{bmatrix} \begin{bmatrix} X_1 & e_1 \\ 0 & (1|2) \end{bmatrix}
    &= \begin{bmatrix} (-X_1)(X_1) & (-X_1)(e_1) + (2|1)(1|2) \\ (1|2)(X_1) & (1|2)(e_1) + (e_2)(1|2) \end{bmatrix} \\
    &= \begin{bmatrix} 0 & 0 \\ 0 & 2\cdot (1|2) \end{bmatrix}.
\end{align*}
\end{example}
\begin{remark}[Notation] \label{rem:complexnotation}
Sometimes it is easier to present the morphisms between the direct summands by explicit arrows between them, where it is implicit that no arrows means it is given by the zero morphism. 
For example, the morphism 
\[
    {P_1\<3\>\oplus P_1\<1\>}
	    \xra{\begin{bsmallmatrix} X_1 & e_1 \\ 0 & (1|2) \end{bsmallmatrix}}
    {P_1\<1\>\oplus P_2}    
\]
in the previous example can be written as:
\[\begin{tikzcd}
	{P_1\<3\>} & {P_1\<1\>} \\
	{P_1\<1\>} & {P_2}
	\arrow["{X_1}", from=1-1, to=1-2]
	\arrow["{(1|2)}", from=2-1, to=2-2]
	\arrow["{e_1}", from=2-1, to=1-2]
	\arrow["\oplus"{marking, allow upside down}, draw=none, from=1-1, to=2-1]
	\arrow["\oplus"{marking, allow upside down}, draw=none, from=1-2, to=2-2]
\end{tikzcd};\]
whereas the resulting composition given by the matrix $\begin{bsmallmatrix} 0 & 0 \\ 0 & 2\cdot(1|2) \end{bsmallmatrix}$ is written as:
\[\begin{tikzcd}
	{P_1\<3\>} & {P_1\<-1\>} \\
	{P_1\<1\>} & {P_2}
	\arrow["{2\cdot (1|2)}", from=2-1, to=2-2]
	\arrow["\oplus"{marking, allow upside down}, draw=none, from=1-1, to=2-1]
	\arrow["\oplus"{marking, allow upside down}, draw=none, from=1-2, to=2-2]
\end{tikzcd}.\]
\end{remark}
\begin{definition}
The \emph{homotopy category of complexes in $\cA_{\Gamma}$}, denoted by $\cK_{\Gamma}$, is the category whose objects are complexes in $\cA_{\Gamma}$ and morphisms are morphisms of complexes \emph{modulo homotopy}.
\end{definition}
Note that this means isomorphisms in $\cK_{\Gamma}$ are homotopy equivalences.
From here on, we will use both ``$X^\bullet$ is isomorphic to $Y^\bullet$ in $\cK_{\Gamma}$'' (or $X^\bullet \cong Y^\bullet \in \cK_{\Gamma}$ for short) and ``$X^\bullet$ is homotopy equivalent to $Y^\bullet$'' interchangeably.
\begin{exercise}
Let $\Gamma = A_2 = \begin{tikzcd}[column sep=small] 1 \ar[r,no head] & 2 \end{tikzcd}$.
Show that the morphism of complexes in $\cK_{A_2}$
\[\begin{tikzcd}
	0 & 0 & {P_1\<1\>} & 0 \\
	0 & {P_1\<1\>} & {P_2} & 0
	\arrow["{(1|2)}", from=2-2, to=2-3]
	\arrow[from=2-1, to=2-2]
	\arrow[from=2-3, to=2-4]
	\arrow["{(1|2)}", from=1-3, to=2-3]
	\arrow[from=1-2, to=1-3]
	\arrow[from=1-3, to=1-4]
	\arrow[from=1-2, to=2-2]
	\arrow[from=1-4, to=2-4]
	\arrow[from=1-1, to=1-2]
	\arrow[from=1-1, to=2-1]
\end{tikzcd}\]
is homotopic to the zero morphism.
Deduce the dimension of hom space between the two complexes in $\cK_{A_2}$.
\end{exercise}

The following operation on $\cK_{\Gamma}$ will be \emph{very important} for computation.
\begin{lemma}[Gaussian elimination \protect{\cite[Lemma 3.2]{bar-natan_burgos-soto_2014}}]\label{lem:gaussianelimination}
Consider a complex in $\cK_{\Gamma}$ of the form
\[
\cdots 
	\ra 
C 
	\xra{\begin{bmatrix}
		\alpha \\
		\beta
		\end{bmatrix}
		} 
b_1 \oplus D 
	\xra{\begin{bmatrix}
		\phi & \delta \\
		\gamma & \epsilon
		\end{bmatrix}
		}
b_2 \oplus E 
	\xra{\begin{bmatrix}
		\mu & \nu
		\end{bmatrix}
		}
F 
	\ra 
\cdots
\]
with $\phi\colon b_1 \ra b_2$ an isomorphism in $\cA_{\Gamma}$.
Then the complex is homotopy equivalent to
\[
\cdots 
	\ra 
C 
	\xra{\beta
		} 
D 
	\xra{\epsilon - \gamma  \phi^{-1} \delta
		}
E 
	\xra{\nu
		}
F 
	\ra 
\cdots.
\]
\end{lemma}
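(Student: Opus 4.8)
The plan is to construct an explicit homotopy equivalence between the two complexes in both directions and verify that the relevant compositions differ from the identity by a homotopy, following the standard Gaussian elimination argument. First I would set up the candidate chain maps. Write $X^\bullet$ for the original complex and $Y^\bullet$ for the simplified one; they agree in all degrees except at $b_1 \oplus D$ (resp.\ $D$) and $b_2 \oplus E$ (resp.\ $E$). Define $F\from X^\bullet \to Y^\bullet$ to be the identity in unchanged degrees, the projection $\begin{bsmallmatrix} 0 & \id_D \end{bsmallmatrix} \from b_1 \oplus D \to D$ in the first changed degree, and $\begin{bsmallmatrix} -\gamma\phi^{-1} & \id_E \end{bsmallmatrix} \from b_2 \oplus E \to E$ in the second; the correction term $-\gamma\phi^{-1}$ is exactly what is needed to make the square involving $\begin{bsmallmatrix}\phi & \delta \\ \gamma & \epsilon\end{bsmallmatrix}$ commute. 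Define $G\from Y^\bullet \to X^\bullet$ to be the identity in unchanged degrees, $\begin{bsmallmatrix} -\phi^{-1}\delta \\ \id_D \end{bsmallmatrix} \from D \to b_1 \oplus D$, and the inclusion $\begin{bsmallmatrix} 0 \\ \id_E \end{bsmallmatrix} \from E \to b_2 \oplus E$.

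The key steps are then: (1) check that $F$ and $G$ are genuine chain maps, i.e.\ all the newly-formed squares commute — this is where the differential $\epsilon - \gamma\phi^{-1}\delta$ in $Y^\bullet$ arises, and where one uses that $\begin{bmatrix}\alpha \\ \beta\end{bmatrix}$ composed with $\begin{bmatrix}\phi & \delta \\ \gamma & \epsilon\end{bmatrix}$ being zero forces $\phi\alpha + \delta\beta = 0$, hence $\alpha = -\phi^{-1}\delta\beta$ (which is what makes the square at $C$ commute after replacing $\begin{bsmallmatrix}\alpha\\\beta\end{bsmallmatrix}$ by $\beta$); similarly $\begin{bmatrix}\mu & \nu\end{bmatrix}$ annihilating the middle differential gives $\mu\phi + \nu\gamma = 0$. (2) Compute $F \circ G$: this is the identity on $Y^\bullet$ on the nose. (3) Compute $G \circ F$: in the changed degrees this is $\begin{bsmallmatrix} 0 & -\phi^{-1}\delta \\ 0 & \id_D \end{bsmallmatrix}$ and $\begin{bsmallmatrix} 0 & 0 \\ 0 & \id_E \end{bsmallmatrix}$, which is not the identity, so one must produce a homotopy $h$ with $\id_{X^\bullet} - G\circ F = d_X h + h d_X$. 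The homotopy is supported in a single spot: $h \from b_2 \oplus E \to b_1 \oplus D$ given by $\begin{bsmallmatrix} \phi^{-1} & 0 \\ 0 & 0 \end{bsmallmatrix}$, and zero elsewhere.

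The verification in step (3) that this single-component $h$ does the job is the one genuinely computational point: one expands $d_X h + h d_X$ at each of the affected degrees ($C \to b_1\oplus D$, $b_1 \oplus D \to b_2 \oplus E$, and $b_2 \oplus E \to F$) and checks the $2\times 2$ (or $1\times 2$, $2\times 1$) matrix identities, again invoking $\alpha = -\phi^{-1}\delta\beta$ and $\mu = -\nu\gamma\phi^{-1}$. I expect the main obstacle to be purely bookkeeping: keeping the block-matrix entries and signs consistent — in particular the sign conventions for the differential are not visible in the statement as written, so one has to be careful that $d^2 = 0$ is used with the correct signs when deriving the identities $\phi\alpha+\delta\beta=0$ and $\mu\phi+\nu\gamma=0$. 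There is no conceptual difficulty: this is a finite linear-algebra check that everything in sight is a complex, the maps are chain maps, and $G\circ F \simeq \id$ via the displayed $h$ while $F\circ G = \id$ strictly. Alternatively, one could cite \cite[Lemma 3.2]{bar-natan_burgos-soto_2014} directly, since the ambient additive category $\cA_\Gamma$ plays no special role — the argument is valid in any additive category — but writing out the homotopy explicitly makes the lemma self-contained and directly usable for the later computations.
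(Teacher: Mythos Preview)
The paper does not give its own proof of this lemma: it simply states it with the citation to \cite[Lemma 3.2]{bar-natan_burgos-soto_2014} and moves on. Your proposal is the standard explicit homotopy-equivalence argument (exactly the one in the cited reference), and it is correct. One small slip worth noting: the composite $G\circ F$ at $b_2\oplus E$ is
\[
\begin{bmatrix}0\\ \id_E\end{bmatrix}\begin{bmatrix}-\gamma\phi^{-1} & \id_E\end{bmatrix}
=\begin{bmatrix}0 & 0\\ -\gamma\phi^{-1} & \id_E\end{bmatrix},
\]
not $\begin{bsmallmatrix}0 & 0\\ 0 & \id_E\end{bsmallmatrix}$ as you wrote; but your homotopy $h=\begin{bsmallmatrix}\phi^{-1} & 0\\ 0 & 0\end{bsmallmatrix}$ already accounts for this, since $d_X h$ at that degree gives $\begin{bsmallmatrix}\id & 0\\ \gamma\phi^{-1} & 0\end{bsmallmatrix} = \id - G\circ F$. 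So the verification goes through unchanged.
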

\begin{exercise}
\begin{enumerate}[(i)]
\item Show that the complex $\cdots \ra 0 \ra A \xra{\id_A} A \ra 0 \ra \cdots$ is homotopy equivalent to the zero complex.
Such a complex is said to be \emph{contractible}.
\item Note that one has to be extra careful in applying multiple Gaussian elimination simultaneously.
Consider the complex
\[
\cdots \ra 0 \ra X\oplus X \xra{\begin{bsmallmatrix} \id_X & \id_X \\ \id_X & \id_X \end{bsmallmatrix}} X \oplus X \ra 0 \ra \cdots.
\]
Is the complex above homotopy equivalent to the zero complex? (Hint: Is the matrix $\begin{bsmallmatrix} \id_X & \id_X \\ \id_X & \id_X \end{bsmallmatrix}$ invertible?) 
Apply Gaussian elimination \emph{carefully} to obtain the correct result.
\end{enumerate}
\end{exercise}
\begin{definition}
A complex $(C^j , d_j) \in \cK_{\Gamma}$ is called \emph{minimal} if there are no summands $X \overset{\oplus}{\subseteq} C^i$ and $Y \overset{\oplus}{\subseteq} C^{i+1}$ such that $d_j|_X\colon X \ra Y$ is an isomorphism in $\cA_{\Gamma}$.
\end{definition}
In other words, we have applied all possible Gaussian eliminations.

Note that the only isomorphisms between indecomposable objects in $\cA_{\Gamma}$ (i.e.\ objects of the form $P_s\<m\>$) are non-zero scalar multiples of the identity maps.
Since every complex in $\cK_{\Gamma}$ contains only a finite number of summands $P_s\<m\>$, there are only finitely many possible Gaussian eliminations that one can perform -- each Gaussian elimination process eliminates a finite (even) number of $P_s\<m\>$'s from the complex.
In other words, every complex in $\cK_{\Gamma}$ is homotopy equivalent to some complex which is minimal.
One might worry that different sequences of Gaussian eliminations will produce different minimal complexes, but the following result ensures that they will all be isomorphic \emph{as complexes} (no homotopy required).
\begin{proposition}[\protect{\cite[Section 6.1]{EW_hodge}}]\label{prop:minimaliso}
Let $M^\bullet, M'^\bullet$ be two minimal complexes of $C^\bullet$ obtained through Gaussian eliminations.
Then $M^\bullet$ and $M'^\bullet$ are isomorphic as complexes (no homotopy required).
\end{proposition}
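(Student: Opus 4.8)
The plan is to reduce the proposition to the following assertion: \emph{every chain homotopy equivalence between two minimal complexes in $\cK_\Gamma$ is already an isomorphism of complexes}. Granting this, the proposition is immediate. Since $M^\bullet$ and $M'^\bullet$ are each obtained from $C^\bullet$ by finitely many Gaussian eliminations, \cref{lem:gaussianelimination} furnishes chain homotopy equivalences $C^\bullet \simeq M^\bullet$ and $C^\bullet \simeq M'^\bullet$; composing one with a homotopy inverse of the other produces a homotopy equivalence $f \colon M^\bullet \to M'^\bullet$, which the assertion promotes to an isomorphism of complexes.

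To prove the assertion I would first isolate a purely componentwise statement: if $(M^\bullet, d)$ is minimal and $\phi \colon M^\bullet \to M^\bullet$ is a chain map with $\phi - \id_{M^\bullet} = dh + hd$ for some homotopy $h = (h^i \colon M^i \to M^{i-1})$, then every component $\phi^i = \id_{M^i} + d^{i-1}h^i + h^{i+1}d^i$ is an isomorphism in $\cA_\Gamma$. The essential input is the explicit description \eqref{eq:admissiblepaths}: a nonzero morphism $P_s\<m\> \to P_t\<n\>$ in $\cA_\Gamma$ forces $m \geq n$, with equality $m = n$ only for nonzero scalar multiples of some $e_s$ --- which are isomorphisms. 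Minimality of $M^\bullet$ says exactly that no matrix entry of any differential $d^j$ is such an isomorphism, hence every nonzero entry of $d$ \emph{strictly} lowers the shift $m$. Writing each $M^i$ as a sum of indecomposables and composing matrices, it follows that every nonzero entry of $d^{i-1}h^i$ and of $h^{i+1}d^i$ also strictly lowers $m$: each such entry is a sum of composites one factor of which is an entry of $d$. Hence $\psi^i \coloneqq d^{i-1}h^i + h^{i+1}d^i$, ordered by the finitely many shifts occurring in $M^i$, is strictly shift-decreasing, so it is nilpotent in the finite-dimensional algebra $\mathrm{End}_{\cA_\Gamma}(M^i)$; therefore $\phi^i = \id + \psi^i$ is invertible, with inverse the finite sum $\sum_{k \geq 0} (-\psi^i)^k$. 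A chain map all of whose components are isomorphisms is an isomorphism of complexes, because the componentwise inverses automatically commute with the differentials.

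Granting the componentwise statement, the assertion follows formally. Let $f \colon M^\bullet \to M'^\bullet$ be a homotopy equivalence with homotopy inverse $g \colon M'^\bullet \to M^\bullet$. Applying the statement to $\phi = gf$, which is homotopic to $\id_{M^\bullet}$ and lives on the minimal complex $M^\bullet$, shows $gf$ is an isomorphism of complexes, so $f$ is a split monomorphism; applying it to $\phi = fg$, homotopic to $\id_{M'^\bullet}$ on the minimal complex $M'^\bullet$, shows $fg$ is an isomorphism of complexes, so $f$ is a split epimorphism. A morphism in an additive category that is both a split monomorphism and a split epimorphism is an isomorphism (its left and right inverses agree), so $f \colon M^\bullet \to M'^\bullet$ is an isomorphism of complexes, proving the assertion and hence the proposition.

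I expect the one genuine obstacle to be the invertibility of the $\phi^i$ --- the sole point where the specific structure of $\cA_\Gamma$, rather than that of a general additive category, is used. It rests on the fact that nonzero non-identity morphisms of $\cA_\Gamma$ strictly decrease the shift degree; invariantly, this is the statement that $\cA_\Gamma$ is a Krull--Schmidt category whose radical is spanned by the non-isomorphisms between indecomposables and acts nilpotently on any object with finitely many indecomposable summands (also transparent from the description of $\cA_\Gamma$ as graded projective modules over the zigzag algebra). Along the way I would record the routine supporting facts that each Gaussian elimination step of \cref{lem:gaussianelimination} is realised by a genuine chain homotopy equivalence, and that a degreewise-invertible chain map is invertible as a chain map.
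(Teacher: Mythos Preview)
Your argument is correct, and at its core it uses the same fact as the paper's proof: the non-isomorphism morphisms between indecomposables of $\cA_\Gamma$ strictly lower the internal grading $\langle m\rangle$, so on any object with finitely many summands they act nilpotently, and hence $\id + (\text{such a map})$ is invertible.

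The packaging is different, however. The paper does not compute with $\psi^i = d h + h d$ directly; instead it introduces the semisimple quotient $\cA_\Gamma^{ss}$ obtained by killing all $(i|j)$ and $X_i$, together with the induced functor $\wt{\cF}\colon \cK_\Gamma \to \cK_\Gamma^{ss}$. Under $\wt{\cF}$ a minimal complex becomes a complex with zero differential, and a homotopy equivalence between zero-differential complexes is automatically a degreewise isomorphism. One then lifts back: since $\cF$ is the identity on objects and its kernel is exactly the nilpotent radical you identified, a morphism that is invertible after applying $\cF$ was already invertible in $\cA_\Gamma$. Your hands-on nilpotency argument and the paper's pass-to-the-semisimplification argument are two expressions of the same mechanism; yours is more self-contained and stays entirely inside $\cA_\Gamma$, while the paper's functorial formulation is the one that generalises cleanly to arbitrary Krull--Schmidt categories (the viewpoint you allude to in your final paragraph).
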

\begin{proof}
Let $\cA_\Gamma^{ss}$ denote the (quotient) category with the same objects in $\cA_\Gamma$, but all morphisms of the form $(i|j)$ and $X_i$ are further identified with zero (so the only morphisms that survive are scalar multiplies of the identity maps $e_i$'s).
One can check that the canonical functor $\cF\colon \cA_\Gamma \ra \cA_\Gamma^{ss}$ that is the identity on objects and sends morphisms to their equivalence classes is well-defined\footnote{In other words, the equivalence relation generated by $(i|j) \sim 0$ and $X_i \sim 0$ forms an ideal in $\cA_\Gamma$.}.

With $\cK_\Gamma^{ss}$ denoting the homotopy category of complexes in $\cA_\Gamma^{ss}$, the functor above extends immediately to $\wt{\cF}\colon \cK_\Gamma \ra \cK_\Gamma^{ss}$ by applying $\cF$ to the complexes.
Note that any minimal complex $M^\bullet$ is mapped to a complex with zero differential under $\wt{\cF}$.
So if $f_\bullet\colon M^\bullet \ra M'^\bullet$ is a homotopy equivalence between two minimal complexes, then $\wt{\cF}(f_\bullet)\colon \wt{\cF}(M^\bullet) \ra \wt{\cF}(M'^\bullet)$ is a homotopy equivalence between two complexes with zero differential.
In particular, $\wt{\cF}(f_\bullet)$ is actually an isomorphism of complexes (no homotopy required), and so is $f_\bullet$.
\end{proof}
In particular, any two minimal complexes of the same complex in $\cK_\Gamma$ can only differ in their differentials up to invertible matrices.

\section{The Artin group action on \texorpdfstring{$\cK_{\Gamma}$}{K}} \label{sec:artinact}
For each $s \in \Gamma_0$ and each complex $C^\bullet\coloneqq (C^j,d_j) \in \cK_\Gamma$, we define a complex $\sigma_s(C^\bullet) \in \cK_{\Gamma}$ as follows.
Let $a(m,j) \coloneqq \dim \Hom_{\cA_{\Gamma}}(P_s\<m\>, C^j)$.
This is the same as counting the total number of length $(m-n)$ admissible walks from $s$ to $t$ for each $P_t\<n\>$ appearing as a summand of $C^j$.
Above each cohomological degree $j$ component of $C^\bullet$, we attach $\oplus_{m \in \Z} P_s\<m\>^{\oplus a(m,j)}$ as follows
\[
\begin{tikzcd}
\cdots
	& \oplus_{m \in \Z} P_s\<m\>^{\oplus a(m,-1)} \ar[d]
	& \oplus_{m \in \Z} P_s\<m\>^{\oplus a(m, 0)} \ar[d]
	& \oplus_{m \in \Z} P_s\<m\>^{\oplus a(m, 1)} \ar[d]
	& \cdots \\
\cdots \ar[r, "d_{-2}"] & C^{-1} \ar[r, "d_{-1}"] & C^0 \ar[r, "d_0"] & C^1 \ar[r, "d_1"]  & \cdots 
\end{tikzcd}
\]
with vertical maps from each summand $P_s\<m\>$ given by the standard basis elements of $\Hom_{\cA_\Gamma}(P_s\<m\>, C^j)$ (i.e.\ admissible walks) into the corresponding $C^j$.
The non-zero morphisms of the composition
\[
\begin{tikzcd}
\oplus_{m \in \Z} P_s\<m\>^{\oplus a(m,j)} \ar[d] \\
C^j \ar[r, "d_j"] & C^{j+1}
\end{tikzcd}
\]
are again the admissible walks into summands of $C^{j+1}$ (possibly up to rescaling).
As such, there is a unique way of choosing (appropriate scalar multiples of) identity maps (i.e.\ $e_s$) between the $P_s\<m\>$'s as the horizontal maps making the following diagram commutative:
\begin{equation} \label{eq:addingPabove}
\begin{tikzcd}
\cdots \ar[r]
	& \oplus_{m \in \Z} P_s\<m\>^{\oplus a(m,-1)} \ar[r] \ar[d]
	& \oplus_{m \in \Z} P_s\<m\>^{\oplus a(m, 0)} \ar[r] \ar[d]
	& \oplus_{m \in \Z} P_s\<m\>^{\oplus a(m, 1)} \ar[r] \ar[d]
	& \cdots \\
\cdots \ar[r, "d_{-2}"] & C^{-1} \ar[r, "d_{-1}"] & C^0 \ar[r, "d_0"] & C^1 \ar[r, "d_1"]  & \cdots 
\end{tikzcd}
\end{equation}
One can check that the first row is indeed a complex (i.e.\ horizontal maps square to zero) and it follows from the commutativity of the squares that the vertical morphisms define a morphism of complexes.
The cone of \eqref{eq:addingPabove} is the complex $\sigma_s(C^\bullet)$.
This construction is a mouthful, but the following example should be enlightening.
\begin{example}
Let $\Gamma = A_2 = \begin{tikzcd}[column sep=small] 1 \ar[r,no head] & 2 \end{tikzcd}$.
Consider the complex 
\[
C^\bullet\coloneqq 
\begin{tikzcd}
	0 & {P_1\<1\>} & {P_2} & 0 \\
	& {P_1\<-1\>}
	\arrow[from=1-2, to=1-3, "-(1|2)"]
	\arrow["\oplus"{description}, draw=none, from=1-2, to=2-2]
	\arrow[from=1-1, to=1-2]
	\arrow[from=1-3, to=1-4]
	\arrow[from=1-1, to=2-2]
\end{tikzcd}
\]
(The map from $P_1\<-1\>$ to $P_2$ is zero and the arrow is omitted; see \cref{rem:complexnotation}.)
Then  $\sigma_1(C^\bullet)$ is the cone of the following diagram
\[\begin{tikzcd}
	0 & {P_1\<1\>\oplus P_1\<3\>} & {P_1\<1\>} & 0 \\
	& {P_1\<-1\> \oplus P_1\<1\>} \\
	\\
	0 & {P_1\<1\>} & {P_2} & 0 \\
	& {P_1\<-1\>}
	\arrow["{{{{{-(1|2)}}}}}", from=4-2, to=4-3]
	\arrow["\oplus"{description}, draw=none, from=4-2, to=5-2]
	\arrow[from=4-1, to=4-2]
	\arrow[from=4-3, to=4-4]
	\arrow[from=4-1, to=5-2]
	\arrow[from=1-1, to=1-2]
	\arrow[from=1-1, to=4-1]
	\arrow[from=1-1, to=2-2]
	\arrow["\oplus"{description}, draw=none, from=1-2, to=2-2]
	\arrow["{{{[e_1 \ X_1]}}}"'{pos=0.4}, shift right=3, curve={height=24pt}, from=2-2, to=5-2]
	\arrow["{{{[e_1 \ X_1]}}}"{pos=0.5}, shift left=4, curve={height=-35pt}, from=1-2, to=4-2]
	\arrow["{{{{[-e_1 \ 0]}}}}", from=1-2, to=1-3]
	\arrow[from=1-3, to=1-4]
	\arrow["{{{(1|2)}}}", from=1-3, to=4-3]
	\arrow[from=1-4, to=4-4]
\end{tikzcd}.\]
\end{example}
\begin{exercise}
Compute the minimal complex of $\sigma_1(C^\bullet)$ above.
\end{exercise}

Given a morphism of complexes $f_{\bullet}\colon C^\bullet \ra D^\bullet$, we also have an induced morphism $\sigma_s(f_\bullet)\colon \sigma_s(C^\bullet) \ra \sigma_s(D^\bullet)$ defined as follows.
Before taking cones (i.e.\ \eqref{eq:addingPabove}), focusing on each column $i$, each $f_i\colon C^i \ra D^i$ induces a commutative square
\[\begin{tikzcd}[column sep=small]
	{\oplus_{m \in \bbZ} P_s\<m\>^{\oplus a_C(m,i)}} \\
	{C^i} & {\oplus_{m \in \bbZ} P_s\<m\>^{\oplus a_D(m,i)}} \\
	& {D^i}
	\arrow[from=1-1, to=2-1]
	\arrow["{f_i}", from=2-1, to=3-2]
	\arrow[from=1-1, to=2-2]
	\arrow[from=2-2, to=3-2]
\end{tikzcd}
,
\]
where the morphisms between the summands $P_s\<m\>$'s are again (scalar multiples of) identity maps from each summand that make the diagram commutative.
One can check that after taking cones to obtain $\sigma_s(C^\bullet)$ and $\sigma_s(D^\bullet)$, this defines a morphism of complexes from $\sigma_s(C^\bullet)$ to $\sigma_s(D^\bullet)$.
In fact, the operation on objects and morphisms makes $\sigma_s$ a \emph{functor} from $\cK_{\Gamma}$ to itself (an \emph{endofunctor}).

There is also an inverse operation: for each $s \in \Gamma_0$ and each complex $C^\bullet\coloneqq (C^j,d_j) \in \cK_\Gamma$, we define a complex $\sigma^{-1}_s(C^\bullet) \in \cK_{\Gamma}$ as follows.
Let $a'(m,j)\coloneqq \dim \Hom_{\cA_{\Gamma}}(C^j,P_s\<m\>)$; this is instead counting the total number of length $(n-m)$ admissible walks from $t$ to $s$ for each $P_t\<n\>$ appearing as a summand of $C^j$.
The complex $\sigma^{-1}_s(C^\bullet)$ is then the cone of the following:
\begin{equation} \label{eq:addingPbelow}
\begin{tikzcd}
\cdots \ar[r, "d_{-2}"] 
	& C^{-1} \ar[r, "d_{-1}"] \ar[d] 
	& C^0    \ar[r, "d_0"] \ar[d]
	& C^1    \ar[r, "d_1"] \ar[d]
	& \cdots \\
\cdots \ar[r]
	& \oplus_{m \in \Z} P_s\<m\>^{\oplus a'(m,-1)} \ar[r] 
	& \oplus_{m \in \Z} P_s\<m\>^{\oplus a'(m, 0)} \ar[r] 
	& \oplus_{m \in \Z} P_s\<m\>^{\oplus a'(m, 1)} \ar[r] 
	& \cdots
\end{tikzcd}
\end{equation}
where the vertical maps are defined from each summand $P_t\<n\>$ of $C^j$ by the admissible walk into the corresponding summand $P_s\<m\>$, and the horizontal maps in the second row are again (scalar multiples of) identity maps between the $P_s\<m\>$'s making the diagram commutative.
The operation of $\sigma_s^{-1}$ on morphisms can be defined similarly to $\sigma_s$, which we shall leave as an exercise to the reader; so $\sigma_s^{-1}$ is also a functor from $\cK_{\Gamma}$ to itself.

The notations used were meant to be suggestive:
\begin{proposition}
For each $s \in \Gamma_0$ and each complex $C^\bullet\coloneqq (C^j,d_j) \in \cK_\Gamma$, the following complexes are isomorphic in $\cK_{\Gamma}$ (i.e.\ they are homotopy equivalent)
\[
\sigma_s \sigma^{-1}_s(C^\bullet) \cong C^\bullet \cong \sigma_s^{-1} \sigma_s(C^\bullet).
\]
In fact, the equation above holds as isomorphism of endofunctors of $\cK_{\Gamma}$.
\end{proposition}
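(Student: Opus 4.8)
My plan is to prove the isomorphism $\sigma_s^{-1}\sigma_s(C^\bullet)\cong C^\bullet$ in $\cK_\Gamma$, functorially in $C^\bullet$; the other isomorphism $\sigma_s\sigma_s^{-1}(C^\bullet)\cong C^\bullet$ is proved in exactly the same way, with the roles of the diagrams \eqref{eq:addingPabove}/\eqref{eq:addingPbelow} and of $\Hom_{\cA_\Gamma}(P_s\<m\>,-)$/$\Hom_{\cA_\Gamma}(-,P_s\<m\>)$ interchanged. Write $R_C^\bullet$ for the complex attached above $C^\bullet$ in \eqref{eq:addingPabove}, with $R_C^k=\bigoplus_m P_s\<m\>^{\dim\Hom(P_s\<m\>,C^k)}$, so that $\sigma_s(C^\bullet)$ is the cone of $\Phi_C\colon R_C^\bullet\to C^\bullet$; and $S_C^\bullet$ for the one attached below in \eqref{eq:addingPbelow}, with $S_C^k=\bigoplus_m P_s\<m\>^{\dim\Hom(C^k,P_s\<m\>)}$ and $\sigma_s^{-1}(C^\bullet)$ the (shifted) cone of $\Psi_C\colon C^\bullet\to S_C^\bullet$. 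The assignments $C^\bullet\mapsto R_C^\bullet$, $C^\bullet\mapsto S_C^\bullet$, $\Phi_{(-)}$, $\Psi_{(-)}$ are all functorial.

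The strategy is a direct computation: write out $\sigma_s^{-1}\sigma_s(C^\bullet)$ degree by degree and strip away all the contractible summands using \cref{lem:gaussianelimination}. The point is that the morphism spaces $\Hom_{\cA_\Gamma}(P_s\<m'\>,P_s\<m\>)$ are spanned by $e_s$ (when $m'=m$) and by $X_s$ (when $m'=m+2$), so that the $P_s$-complex one attaches below $\sigma_s(C^\bullet)$ when forming $\sigma_s^{-1}$ splits canonically, in each degree, as an ``$e_s$-copy'' of a piece of $R_C^\bullet$, an ``$X_s$-copy'' of a graded-shifted piece of $R_C^\bullet$, and a copy of $S_C^\bullet$ itself. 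By the very way the connecting maps are chosen to make \eqref{eq:addingPbelow} commute, the differential of $\sigma_s^{-1}\sigma_s(C^\bullet)$ contains identity maps pairing each ``$e_s$-copy'' summand with the corresponding summand of $R_C^\bullet$ sitting inside $\sigma_s(C^\bullet)$. Gaussian-eliminating all of these simultaneously (this is the bulk of the argument) deletes every $R_C$-summand; one then checks --- following how the remaining entries of the differential are modified by these eliminations, and keeping track of the signs produced by the two nested cones --- that the ``$X_s$-copy'' summands and the $S_C^\bullet$ summands cancel against each other and that what survives is precisely $C^\bullet$ with its original differential.

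A cleaner, less computational route is available once one notes that $\sigma_s$ and $\sigma_s^{-1}$ are triangulated (they send a termwise-split short exact sequence of complexes to one, since $\Hom$ into and out of the additive category $\cA_\Gamma$ is additive, and hence commute with the formation of cones). Since every object of $\cK_\Gamma$ is built from the $P_t\<m\>$ by finitely many mapping cones, it then suffices to verify $\sigma_s^{-1}\sigma_s(P_t\<m\>)\cong P_t\<m\>$ on generators. A short Gaussian-elimination computation gives the values there: $\sigma_s(P_t\<m\>)=P_t\<m\>=\sigma_s^{-1}(P_t\<m\>)$ if $t\neq s$ is not adjacent to $s$; $\sigma_s(P_t\<m\>)$ is the two-term complex $P_s\<m+1\>\xrightarrow{(s|t)}P_t\<m\>$ and $\sigma_s^{-1}(P_t\<m\>)$ the two-term complex $P_t\<m\>\xrightarrow{(t|s)}P_s\<m-1\>$ if $t$ is adjacent to $s$; and $\sigma_s(P_s\<m\>)\cong P_s\<m+2\>[1]$ (and, dually, $\sigma_s^{-1}(P_s\<m\>)\cong P_s\<m-2\>[-1]$), each by one elimination of an $e_s$. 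The first two families compose trivially, and in the adjacent case $\sigma_s^{-1}\sigma_s(P_t\<m\>)=\cone\bigl(\sigma_s^{-1}(P_s\<m+1\>)\to\sigma_s^{-1}(P_t\<m\>)\bigr)$ collapses under one more Gaussian elimination to $P_t\<m\>$.

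I expect the real obstacle to be twofold. First, the sign bookkeeping in the direct route: composing two cone constructions introduces two layers of the sign twists in \eqref{eq:cone}, and one must also check that the many simultaneous Gaussian eliminations are legitimate --- i.e.\ that one never falls into the pathology flagged in the exercise following \cref{lem:gaussianelimination}, which amounts to verifying that the block of the differential supported on the $R_C$-summands is, for a suitable ordering of summands, unitriangular. Second, and more serious, is naturality: upgrading ``isomorphism for each $C^\bullet$'' to ``isomorphism of endofunctors'' is not automatic from a check on objects, and one must either track a morphism $f_\bullet\colon C^\bullet\to D^\bullet$ through the fixed sequence of eliminations (well defined up to canonical isomorphism by \cref{prop:minimaliso}) or construct the natural transformation $\sigma_s^{-1}\sigma_s\Rightarrow\id$ a priori from the evaluation/coevaluation maps $\Phi,\Psi$ and then invoke the object-wise check. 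The slickest way to handle everything at once is to pass to the description of $\cA_\Gamma$ as graded projective modules over the zigzag algebra $A$ of $\Gamma$ noted above: there $\sigma_s\simeq(-)\otimes^{\mathbf L}_A X_s$ and $\sigma_s^{-1}\simeq(-)\otimes^{\mathbf L}_A Y_s$ for explicit two-term complexes of $A$-bimodules $X_s,Y_s$, so $\sigma_s^{-1}\sigma_s\simeq(-)\otimes^{\mathbf L}_A(X_s\otimes^{\mathbf L}_A Y_s)$, with $X_s\otimes^{\mathbf L}_A Y_s\simeq A$ an easy computation with two-term bimodule complexes, which yields the isomorphism of functors directly.
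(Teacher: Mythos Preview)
The paper does not give its own proof of this proposition; it defers to \cite{khovanov_seidel_2001} (and \cite{AK_link_hom,HueKho,thomas_seidel_2001}). The argument in those references is precisely your third route: realise $\sigma_s$ and $\sigma_s^{-1}$ as tensoring with explicit two-term complexes of graded $(A,A)$-bimodules over the zigzag algebra $A$, and check by a short direct computation that their tensor product is homotopy equivalent to the diagonal bimodule. This gives the isomorphism of endofunctors in one stroke and sidesteps all of the naturality bookkeeping you (rightly) flag as the main obstacle in routes 1 and 2. So your proposal, in its final form, coincides with what the cited literature does.

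Your routes 1 and 2 are legitimate alternatives, and your assessment of their difficulties is accurate. One sharpening on route 2: the sentence ``it then suffices to verify $\sigma_s^{-1}\sigma_s(P_t\<m\>)\cong P_t\<m\>$ on generators'' is not correct as stated --- two exact endofunctors agreeing up to isomorphism on a cone-generating set of objects need not even agree on all objects, since a cone depends on the morphism and not only on its source and target. What \emph{is} true, and what you implicitly invoke later, is that once a natural transformation $\sigma_s^{-1}\sigma_s\Rightarrow\id$ has been constructed, checking it on the $P_t\<m\>$ suffices by the five-lemma; the bimodule picture is exactly what supplies that transformation for free. Finally, your parenthetical ``(shifted) cone'' in the description of $\sigma_s^{-1}$ is correct and necessary: without the extra $[-1]$ one computes $\sigma_s^{-1}\sigma_s(P_t\<m\>)\cong P_t\<m\>[1]$ rather than $P_t\<m\>$, so the paper's verbal definition should be read with that shift.
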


Moreover, these endofunctors also satisfy the braiding relations
\begin{proposition}
Let $C^\bullet\coloneqq (C^j,d_j) \in \cK_\Gamma$ and let $s, t \in \Gamma_0$.
\begin{itemize}
\item If $(s,t) \in \Gamma_1$, we have that
\[
\sigma_s \sigma_t \sigma_s(C^\bullet) \cong \sigma_t \sigma_s \sigma_t(C^\bullet) \in \cK_{\Gamma};
\]
\item otherwise $s$ and $t$ are not connected by an edge and we have that
\[
\sigma_s \sigma_t(C^\bullet) \cong \sigma_t \sigma_s(C^\bullet) \in \cK_{\Gamma}.
\]
\end{itemize}
In fact, the equations above hold as isomorphisms of endofunctors of $\cK_{\Gamma}$.
\end{proposition}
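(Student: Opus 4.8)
The plan is to reduce both braiding identities to a finite check on the indecomposable objects $P_i$, $i \in \Gamma_0$, concentrated in cohomological degree $0$, and then to settle those checks by explicit Gaussian elimination.

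First I would argue that $\sigma_s$ is an \emph{exact} endofunctor. Directly from the defining cone \eqref{eq:addingPabove} one sees that $\sigma_s$ commutes with the internal shift $\<1\>$ and with the cohomological shift, and that the cone construction interleaves, so that $\sigma_s\big(\cone(f)\big)\cong\cone\big(\sigma_s(f)\big)$ naturally in $f$ (the standard interleaving/$3\times 3$ argument for a cone of a morphism of cones; alternatively this is immediate once the triangulated structure of \cref{sec:triangulated} is in place). Consequently, if a natural isomorphism of the two composite functors holds on $A^\bullet$ and on $B^\bullet$, it holds on $\cone(f\colon A^\bullet\to B^\bullet)$; as every object of $\cK_\Gamma=\Kom^b(\cA_\Gamma)$ is obtained from the $P_i\<m\>$ by finitely many cones, it suffices to produce, for each $i\in\Gamma_0$, an isomorphism $\sigma_s\sigma_t\sigma_s(P_i)\cong\sigma_t\sigma_s\sigma_t(P_i)$ (resp.\ $\sigma_s\sigma_t(P_i)\cong\sigma_t\sigma_s(P_i)$ when $(s,t)\notin\Gamma_1$) which is natural with respect to the generating morphisms $e_i$, $(i|j)$, $X_i$ of $\cA_\Gamma$; naturality on these then propagates to all of $\cK_\Gamma$ by the same cone argument.

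Next I would tabulate $\sigma_s(P_i)$ in the three possible positions: $\sigma_s(P_i)\cong P_i$ when $i\neq s$ and $(i,s)\notin\Gamma_1$ (no admissible walk from $s$ to $i$, so nothing gets attached); $\sigma_s(P_i)$ is the two-term minimal complex $\big[P_s\<1\>\xra{(s|i)}P_i\big]$ when $(i,s)\in\Gamma_1$; and $\sigma_s(P_s)=\cone\big([X_s\ e_s]\colon P_s\<2\>\oplus P_s\to P_s\big)$, which by \cref{lem:gaussianelimination} minimalises to $P_s\<2\>[1]$. It follows that for every vertex $i$ that is neither equal nor adjacent to $s$ or $t$, all functors in sight act as the identity and both identities are trivial; the content is concentrated in the vertices $i\in\{s,t\}$ together with the neighbours of $s$ and of $t$. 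For those finitely many $i$ the complexes $\sigma_s\sigma_t\sigma_s(P_i)$ (resp.\ $\sigma_s\sigma_t(P_i)$) are supported on the $P_j\<m\>$ with $j$ in the closed neighbourhood of $\{s,t\}$, and I would compute their minimal forms by iterated Gaussian elimination. In the commuting case $(s,t)\notin\Gamma_1$ the minimal complex comes out manifestly symmetric under $s\leftrightarrow t$; in the braid case $(s,t)\in\Gamma_1$ one checks directly that $\sigma_s\sigma_t\sigma_s(P_i)$ and $\sigma_t\sigma_s\sigma_t(P_i)$ have the same minimal form --- these are exactly the vertex-by-vertex computations underlying the spherical-twist/zigzag-algebra braid actions (cf.\ Khovanov--Seidel, Seidel--Thomas, Rouquier--Zimmermann). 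Since any two minimal complexes of a given object are isomorphic \emph{as complexes} by \cref{prop:minimaliso}, matching the minimal forms yields the desired isomorphism; tracking this identification through the generating morphisms $e_i$, $(i|j)$, $X_i$ gives naturality.

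The main obstacle is the braid-case bookkeeping for $i\in\{s,t\}$ (and, in the presence of triangles in $\Gamma$, for common neighbours of $s$ and $t$): there the unreduced complex $\sigma_s\sigma_t\sigma_s(P_s)$ has several summands in each cohomological degree, and one must carry the signs and rescalings introduced by the cones in \eqref{eq:cone} carefully through every Gaussian elimination and then exhibit the explicit invertible matrices identifying the two minimal forms. A smaller, more conceptual point to pin down is the exactness claim (that $\sigma_s$ preserves cones) used in the reduction step; everything else is routine.
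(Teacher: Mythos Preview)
The paper does not supply its own proof here; immediately after the statement it defers to \cite{khovanov_seidel_2001} for type $A_n$ and remarks that the argument extends to arbitrary simply-laced $\Gamma$ with no difficulty (citing also \cite{AK_link_hom, HueKho, thomas_seidel_2001}). So there is no in-paper argument to compare against, only the cited literature.

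Your outline is sound and matches the computational core of those references: the tabulation of $\sigma_s(P_i)$ you give is correct, and for each relevant $i$ the two sides do have isomorphic minimal complexes obtainable by Gaussian elimination. The one point to flag is the passage from ``isomorphic on each $P_i$, compatibly with the generating morphisms'' to ``isomorphic as endofunctors of $\cK_\Gamma$''. Your propagation-through-cones argument is morally right but needs to be stated with care, since cones are not functorial in a bare triangulated category; one must either work at the chain/dg level throughout (where the $\sigma_s$ are honestly defined and cones \emph{are} functorial), or --- as Khovanov--Seidel and Huerfano--Khovanov actually do --- realise each $\sigma_s$ as tensoring with an explicit two-term complex of graded bimodules over the zigzag algebra and verify the braid relation as a homotopy equivalence of bimodule complexes. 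The bimodule route packages naturality for free and collapses the ``explicit invertible matrices identifying the two minimal forms'' you anticipate into a single bimodule computation; your route is equivalent in content but obliges you to say explicitly that the isomorphisms you build are chain-level and hence assemble functorially.
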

Proofs for the two results above for $\Gamma = A_n$ can be found in \cite{khovanov_seidel_2001}\footnote{The grading in \cite{khovanov_seidel_2001} is slightly different, but this does not affect the proof.}, which generalises to arbitrary $\Gamma$ with no difficulty (see also \cite{AK_link_hom, HueKho, thomas_seidel_2001}).

Combining the two proposition above, we have the following theorem.
\begin{theorem}\label{thm:braidaction}
We have a (weak) action of $\bbB(\Gamma)$ on $\cK_{\Gamma}$ via sending each standard generator $\sigma_s \in \bbB(\Gamma)$ to the invertible endofunctor $\sigma_s$.
\end{theorem}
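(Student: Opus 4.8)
The plan is to deduce the theorem formally from the two preceding propositions, using nothing more than the universal property of the presentation of the Artin group $\bbB(\Gamma)$ by its standard generators $\sigma_s$ ($s \in \Gamma_0$) subject only to the braid relations $\sigma_s\sigma_t\sigma_s = \sigma_t\sigma_s\sigma_t$ for $(s,t)\in\Gamma_1$ and the commutation relations $\sigma_s\sigma_t = \sigma_t\sigma_s$ for $s,t$ non-adjacent. So the statement is really a piece of bookkeeping: all the geometric/combinatorial content sits in the two propositions (which in turn rest on the Khovanov--Seidel-type computations of \cite{khovanov_seidel_2001}).

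First I would pin down the receiving group. Since $\cK_\Gamma$ is essentially small (its objects are bounded complexes assembled from the countable family $P_i\<m\>$), the natural-isomorphism classes $[F]$ of endofunctors $F\colon \cK_\Gamma \ra \cK_\Gamma$ form a set; those $[F]$ admitting a two-sided inverse up to natural isomorphism (the autoequivalences) form a group $\Aut(\cK_\Gamma)$ under $[F]\cdot[G] \coloneqq [F\circ G]$ with unit $[\id]$, associativity and unitality being formal.

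Next I would check that the assignment $\sigma_s\mapsto[\sigma_s]$ makes sense and respects the relations. By the proposition giving $\sigma_s\sigma_s^{-1}\cong\id\cong\sigma_s^{-1}\sigma_s$ as endofunctors, each $\sigma_s$ is an autoequivalence with $[\sigma_s]^{-1}=[\sigma_s^{-1}]$, so we indeed get a map $\Gamma_0\ra\Aut(\cK_\Gamma)$. By the braiding proposition, $\sigma_s\sigma_t\sigma_s\cong\sigma_t\sigma_s\sigma_t$ for $(s,t)\in\Gamma_1$ and $\sigma_s\sigma_t\cong\sigma_t\sigma_s$ for $s,t$ non-adjacent, as endofunctors; i.e.\ the elements $[\sigma_s]\in\Aut(\cK_\Gamma)$ satisfy exactly the defining relators of $\bbB(\Gamma)$. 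The universal property of the group presentation then produces a unique homomorphism $\bbB(\Gamma)\ra\Aut(\cK_\Gamma)$ sending $\sigma_s\mapsto[\sigma_s]$, which is precisely a weak action of $\bbB(\Gamma)$ on $\cK_\Gamma$ by invertible endofunctors.

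I do not expect a genuine obstacle here; the only points that merit care are the mild set-theoretic remark above (so that $\Aut(\cK_\Gamma)$ is honestly a group) and being explicit about what ``weak action'' means: we record natural-isomorphism \emph{classes} of functors rather than the natural isomorphisms themselves, so we are building a plain homomorphism into $\Aut(\cK_\Gamma)$ and no higher coherence (2-cocycle) data needs to be tracked. (Strictifying this to an honest action, or to a genuine action of the category $\bbB(\Gamma)$ on $\cK_\Gamma$, would be a separate matter and is not claimed.)
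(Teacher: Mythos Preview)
Your proposal is correct and matches the paper's approach exactly: the paper simply states ``Combining the two propositions above, we have the following theorem,'' and your write-up is a careful unpacking of precisely that deduction via the universal property of the presentation of $\bbB(\Gamma)$. The extra care you take (essential smallness so that $\Aut(\cK_\Gamma)$ is a genuine group, and spelling out that ``weak'' means a homomorphism into isomorphism classes of autoequivalences with no coherence data) is not in the paper but is welcome and entirely in line with its intent.
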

The action of $\bbB(\Gamma)$ on $\cK_{\Gamma}$ is known to be faithful for $\Gamma$ of type A \cite{khovanov_seidel_2001, rouquier_zimmermann_2003}, type ADE \cite{brav_thomas_2010, LQ_ADE} and affine type A \cite{IUU_affineA}.
For $\bbB(\Gamma)$ a RAAG (where the construction of $\cA_\Gamma$ will require some simple modification to allow for edges labelled by $\infty$), see \cite{licata_2017_free}.

\begin{exercise}
Let $\Gamma = A_2 = \begin{tikzcd}[column sep=small] 1 \ar[r,no head] & 2 \end{tikzcd}$.
\begin{enumerate}[(i)] \label{ex:computeA2braid}
\item Compute $\sigma_i(P_j)$ for $i,j \in \{1,2\}$.
\item Show that $\sigma_1\sigma_2\sigma_1(P_1) \cong \sigma_2\sigma_1\sigma_2(P_1)$.
\end{enumerate}
\end{exercise}

\section{Triangulated structure of \texorpdfstring{$\cK_{\Gamma}$}{K}} \label{sec:triangulated}
Before we dive into stability conditions, we will need a categorical structure attached to $\cK_{\Gamma}$, which is the structure of a \emph{triangulated category}.
We won't flesh out all of the details; the keen reader is referred to \cite{neeman01}.
We do need, however, the notion of an \emph{exact triangle} (also known as a distinguished triangle).

Firstly, recall the construction of a cone of a morphism $f_\bullet \colon C^\bullet \ra D^\bullet$ of complexes.
Note that by construction, each cohomological degree component $j$ of the complex $\cone(f_\bullet)$ is a direct sum $C^{j+1} \oplus D^j$.

Let us define $C^\bullet[1]$ to be the complex $C^\bullet$ ``shifted to the left'' with \emph{negative differentials}; more precisely, the $j$th cohomological degree component of the complex $C^\bullet[1]$ is given by $C^{j+1}$ and the differentials maps $d_i$'s are all replaced with $-d_i$'s.
We have a chain of morphisms:
\begin{equation}\label{eq:conetriangle}
C^\bullet \xra{f_\bullet} D^\bullet \xra{\iota} \cone(f_\bullet) \xra{p} C^\bullet[1]
\end{equation}
with $\iota$ and $p$ denoting the canonical inclusion and projection respectively (one should check that these are indeed morphisms of complexes).
We call this a \emph{standard triangle} in $\cK_{\Gamma}$.
\begin{remark}
Note that there is a different \emph{internal grading shift} operation (functor) $\<\pm 1\>$ that one can define on $\cK_{\Gamma}$.
Given $P_j\<m\> \in \cA_{\Gamma}$, we define 
\[
(P_j\<m\>)\<\pm 1\> \coloneqq P_j\<m \pm 1\>.
\]
This carries forward to complexes in a similar fashion.
\end{remark}
\begin{definition}
An \emph{exact triangle} in $\cK_\Gamma$ is a chain of morphisms of complexes 
\[
X^\bullet \xra{f_\bullet} Y^\bullet \xra{g_\bullet} Z^\bullet \xra{h_\bullet} X[1]^\bullet
\]
that is isomorphic to some standard triangle $C^\bullet \xra{\theta_\bullet} D^\bullet \xra{\iota} \cone(f_\bullet) \xra{p} C^\bullet[1]$; this means that we have a commutative diagram in $\cK_\Gamma$:
\[
\begin{tikzcd}
X^\bullet \ar[r, "f_\bullet"] \ar[d,"\varphi_1"] &
	Y^\bullet \ar[r, "g_\bullet"] \ar[d,"\varphi_2"] &
	Z^\bullet \ar[r, "h_\bullet"] \ar[d,"\varphi_3"] &
	X[1] \ar[d,"\varphi_1{[1]}"] \\
C^\bullet \ar[r, "\theta_{\bullet}"] &
	D^\bullet \ar[r, "\iota"] &
	\cone(\varphi_{\bullet}) \ar[r, "p"] & 
	C^\bullet[1]
\end{tikzcd}
\]
with each $\varphi_i$ an isomorphism in $\cK_{\Gamma}$.
\end{definition}
\begin{remark}
The following notations are also commonly used to represent an exact triangle:
\[
    X \ra Y \ra Z \ra, \quad
        X \ra Y \ra Z \xra{+1}, \quad
            \begin{tikzcd}[column sep=small]
            X \ar[rr] & & Y \ar[ld] \\
            & Z \ar[ul, dashed]
            \end{tikzcd},
\]
where it is implied that the target of the last morphism must be $X[1]$ so it is omitted.
\end{remark}
\begin{exercise} \label{ex:exacttriangle}
\begin{enumerate}[(i)]
\item Show that $X^\bullet \xra{\id} X^\bullet \ra 0 \ra X[1]$ is an exact triangle.
\item Show that 
\[
D^\bullet \xra{\iota} \cone(f_\bullet) \xra{p} C^\bullet[1] \xra{-f_\bullet[1]} D^\bullet[1]
\]
is an exact triangle. (Hint: compute the cone of $\iota$.)
\item Deduce that $X^\bullet \xra{f} Y^\bullet \ra 0 \ra X^\bullet[1]$ is exact if and only if $X^\bullet \xra{f} Y^\bullet$ is an isomorphism in $\cK_\Gamma$.
\end{enumerate}
\end{exercise}
\begin{definition}
A functor $\cF\colon \cK \ra \cK'$ between triangulated categories is \emph{exact} if it sends exact triangles to exact triangles.
Note that this implies $\cF$ and $[1]$ commute (up to isomorphism of functors).
\end{definition}
\begin{exercise} \label{ex:actionexact}
Show that $\sigma_s\colon \cK_{\Gamma} \ra \cK_{\Gamma}$ is an exact (endo)functor.
\end{exercise}
\begin{definition}
The \emph{Grothendieck group} $K_0(\cK_{\Gamma})$ is the free abelian group generated by isomorphism classes of objects $[X^\bullet]$ in $\cK_{\Gamma}$, modulo the relations 
\[
[Y^\bullet] = [X^\bullet] + [Z^\bullet] \iff \exists \text{ an exact triangle of the form } X^\bullet \ra Y^\bullet \ra Z^\bullet \ra X^\bullet[1].
\]
\end{definition}
\begin{exercise}
Show that $[X^\bullet[1]] = -[X^\bullet]$. (Use \cref{ex:exacttriangle}. \emph{This is not to be confused with $[X^\bullet\<1\>]$.})
\end{exercise}
Let us now study the Grothendieck group $K_0(\cK_{\Gamma})$.
Firstly, note that any complex in $\cK_{\Gamma}$ is an iterated cone of $P_i\<m\>$'s.
Indeed, given a complex $C^\bullet \in \cK_{\Gamma}$ whose right-most non-zero cohomological degree component is $k$, we have the following exact (standard) triangle:
\[\begin{tikzcd}
	{C^k[-k]} & \cdots & 0 & 0 & {C^k} & 0 \\
	{C^\bullet} & \cdots & {C^{k-2}} & {C^{k-1}} & {C^k} & 0 \\
	{\cone(\iota)} & \cdots & {C^{k-2}} & {C^{k-1}} & 0 & 0 \\
	{C^k[-k+1]} & \cdots & 0 & {C^k} & 0 & 0
	\arrow[from=2-2, to=2-3]
	\arrow["{{d_{k-2}}}", from=2-3, to=2-4]
	\arrow["{{d_k}}", from=2-4, to=2-5]
	\arrow["\id"', from=2-3, to=3-3]
	\arrow["\id"', from=2-4, to=3-4]
	\arrow["\id"', from=1-5, to=2-5]
	\arrow[from=3-2, to=3-3]
	\arrow["{{d_{k-2}}}", from=3-3, to=3-4]
	\arrow[from=3-3, to=4-3]
	\arrow[from=3-4, to=3-5]
	\arrow[from=1-4, to=1-5]
	\arrow[from=1-3, to=1-4]
	\arrow[from=1-2, to=1-3]
	\arrow[from=1-3, to=2-3]
	\arrow[from=1-4, to=2-4]
	\arrow[from=2-5, to=3-5]
	\arrow[from=1-5, to=1-6]
	\arrow[from=2-5, to=2-6]
	\arrow[from=3-5, to=3-6]
	\arrow["{{=}}"{marking, allow upside down}, draw=none, from=1-1, to=1-2]
	\arrow["{{=}}"{marking, allow upside down}, draw=none, from=2-1, to=2-2]
	\arrow["{{\cong}}"{marking, allow upside down}, draw=none, from=3-1, to=3-2]
	\arrow["\iota"', from=1-1, to=2-1]
	\arrow[from=2-1, to=3-1]
	\arrow[from=3-1, to=4-1]
	\arrow["{{=}}"{marking, allow upside down}, draw=none, from=4-1, to=4-2]
	\arrow[from=4-2, to=4-3]
	\arrow[from=4-3, to=4-4]
	\arrow["{d_k}", from=3-4, to=4-4]
	\arrow[from=4-4, to=4-5]
	\arrow[from=4-5, to=4-6]
	\arrow[from=3-5, to=4-5]
	\arrow[from=1-6, to=2-6]
	\arrow[from=2-6, to=3-6]
	\arrow[from=3-6, to=4-6]
\end{tikzcd}\]
Note that for $A \in \cA_{\Gamma}$, we have used (and will continue using) $A[k]$ to denote the complex in $\cK_{\Gamma}$ concentrated in degree $-k$ (warning: not $k$).
When $k=0$, we shall abuse notation and simply write $A$ instead of $A[0]$; it will be clear from the context whether we are referring to $A \in \cA_{\Gamma}$ or $A \in \cK_{\Gamma}$.
\begin{exercise}
Show that for any $C^\bullet \in \cK_{\Gamma}$, the following holds in $K_0(\cK_{\Gamma})$:
\[
[C^\bullet] = \sum_{i \in \bbZ} (-1)^i[C^i].
\]
\end{exercise}
In fact, one can show that $K_0(\cK_{\Gamma})$ is a free abelian group with basis 
\[
\left\{ 
	[P_i\<m\>] \mid i \in \Gamma_0, \ m \in \bbZ
\right\}
\]
By defining $q^{\pm 1} \cdot [X^\bullet] = [X^\bullet \<\pm 1\>]$, we have that $K_0(\cK_{\Gamma})$ is a free $\bbZ[q^{\pm 1}]$-module with basis $\{[P_i] \mid i\in \Gamma_0\}$:
\begin{equation} \label{eq:grothendieckgrpq}
K_0(\cK_{\Gamma}) \cong \bigoplus_{i \in \Gamma_0} \bbZ[q^{\pm 1}]\{[P_i]\}
\end{equation}
Let $\Lambda_{\Gamma}$ denote the further quotient of $K_0(\cK_{\Gamma})$ that evaluates $q$ at $-1$, so that
\[
\Lambda_{\Gamma}= \bigoplus_{i \in \Gamma_0} \bbZ\{[P_i]\}.
\]
One should think of $\Lambda_\Gamma$ as the root lattice associated to the Coxeter (Weyl) group $\bbW(\Gamma)$.
Namely, by equipping $\Lambda_\Gamma$ with the symmetric bilinear product 
\[
(-,-): \Lambda_\Gamma \times \Lambda_\Gamma \ra \bbZ
\]
defined on the basis elements as follows (note that this bilinear form is twice the ``usual'' one: $(-,-) = 2\cB(-,-)$)
\[
([P_i], [P_j]) \coloneqq
    \begin{cases}
        2,  &\text{ if } i=j; \\
        -1, &\text{ if } (i,j) \in \Gamma_1; \\
        0,  &\text{ otherwise},
    \end{cases}
\]
we have an action of $\bbW(\Gamma)$ on $\Lambda_\Gamma$ given by
\[
s_i(v) = v - ([P_i], v)[P_i],
\]
where $s_i \in \bbW(\Gamma)$ denote the standard generators in $\bbW(\Gamma)$.
This action is the faithful \emph{geometric representation} of $\bbW(\Gamma)$ (or the Tits' representation).

As the action of $\bbB(\Gamma)$ on $\cK_{\Gamma}$ is exact, it descends onto an action on the Grothendieck group $K_0(\cK_{\Gamma})$, where
\[
\sigma_i([X^\bullet]) \coloneqq [\sigma_i(X^\bullet)].
\]
Furthermore, the action descends onto the quotient $\Lambda_{\Gamma}$, since the functors $\sigma_i$ and $\<\pm 1\>$ commute (check this!).
\begin{exercise}
Use the computations from \cref{ex:computeA2braid} to show that the action of $\bbB(A_2)$ on $K_0(\cK_{A_2})$ is isomorphic to the reduced Burau representation of the braid group (on 3 strands).
Moreover, after evaluating $q=-1$, show that the action on $\Lambda_\Gamma$ factors through the standard geometric representation of $\bbW(A_2) \cong S_3$.
\end{exercise}
For general Coxeter graphs $\Gamma$, the result above also follows from a direct computation on the basis elements $[P_i]$ of $K_0(\cK_{\Gamma})$, which we record here.
\begin{proposition}\label{prop:coxeterongrothendieck}
The induced action of $\bbB(\Gamma)$ on the quotient $\Lambda_{\Gamma} \cong K_0(\cK_{\Gamma})|_{q=-1}$ factors through the geometric representation of $\bbW(\Gamma)$ on $\Lambda_\Gamma$, i.e.\ the following diagram commutes:
\[
\begin{tikzcd}
K_0(\cK) \ar[r, "q=-1"] \ar[d,"\sigma_i"] & \Lambda_\Gamma \ar[d,"s_i"]\\
K_0(\cK_{\Gamma}) \ar[r, "q=-1"] & \Lambda_\Gamma
\end{tikzcd}.
\]
In particular, $\sigma_i^2 \in \bbB(\Gamma)$ acts as the identity on $\Lambda_{\Gamma}$.
\end{proposition}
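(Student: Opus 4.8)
The plan is to verify the claim by an explicit computation of the action of each $\sigma_i$ on the basis classes $[P_j]$ of $K_0(\cK_\Gamma)$, and then to match the resulting formula (after setting $q=-1$) with the reflection formula $s_i(v) = v - ([P_i],v)[P_i]$ defining the geometric representation. First I would fix $i \in \Gamma_0$ and compute $\sigma_i(P_j)$ directly from the definition in \cref{sec:artinact}, distinguishing three cases: $j = i$, $(i,j)\in\Gamma_1$, and $i,j$ not connected. In the case $j=i$: the relevant hom space contributions are $\Hom(P_i, P_i) = \bbC e_i$ and $\Hom(P_i\langle 2\rangle, P_i) = \bbC X_i$, so the complex $\oplus_m P_i\langle m\rangle^{\oplus a(m,0)}$ attached above $P_i$ is $P_i\langle 2\rangle \oplus P_i$, and $\sigma_i(P_i)$ is the cone of a map $P_i\langle 2\rangle \oplus P_i \to P_i$; after Gaussian elimination (\cref{lem:gaussianelimination}) this is homotopy equivalent to the two-term complex with $P_i\langle 2\rangle$ in degree $-1$ and nothing in degree $0$, i.e.\ $\sigma_i(P_i) \cong P_i\langle 2\rangle[1]$. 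In the case $(i,j)\in\Gamma_1$: the only contribution is $\Hom(P_i\langle 1\rangle, P_j) = \bbC\cdot(i|j)$, so $\sigma_i(P_j)$ is the cone of $P_i\langle 1\rangle \to P_j$, a minimal two-term complex with $P_i\langle 1\rangle$ in degree $-1$ and $P_j$ in degree $0$. In the remaining case there are no admissible walks from $i$ to $j$ of the relevant lengths, so the attached complex is zero and $\sigma_i(P_j) \cong P_j$.

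Next I would pass to $K_0$ and use the exercise identity $[C^\bullet] = \sum_k (-1)^k [C^k]$ together with $[X\langle 1\rangle] = q[X]$. This gives, in $K_0(\cK_\Gamma)$,
\[
\sigma_i([P_i]) = -q^2[P_i], \qquad
\sigma_i([P_j]) = [P_j] - q[P_i] \ \ ((i,j)\in\Gamma_1), \qquad
\sigma_i([P_j]) = [P_j] \ \ \text{otherwise.}
\]
Setting $q = -1$: $\sigma_i([P_i]) = -[P_i] = [P_i] - 2[P_i]$, $\sigma_i([P_j]) = [P_j] + [P_i] = [P_j] - (-1)[P_i]$ when $(i,j)\in\Gamma_1$, and $\sigma_i([P_j]) = [P_j]$ otherwise. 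In every case this is exactly $[P_j] - ([P_i],[P_j])[P_i]$ with $([P_i],[P_j])$ the values $2, -1, 0$ of the bilinear form, which is precisely $s_i([P_j])$. Since the $[P_j]$ form a $\bbZ$-basis of $\Lambda_\Gamma$ and both $\sigma_i$ and $s_i$ act $\bbZ$-linearly, the square commutes. For the final sentence, $\sigma_i^2$ acts on $\Lambda_\Gamma$ as $s_i^2 = \id$ because $s_i$ is an involution (directly: $s_i^2(v) = s_i(v) - ([P_i], s_i(v))[P_i] = v - ([P_i],v)[P_i] - ([P_i],v)[P_i] + ([P_i],v)([P_i],[P_i])[P_i] = v$ using $([P_i],[P_i]) = 2$).

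The only genuine content is the Gaussian-elimination step producing the homotopy equivalence $\sigma_i(P_i) \cong P_i\langle 2\rangle[1]$ in the case $j=i$ — one must check that the map $P_i \to P_i$ appearing as a component of the cone differential is (a nonzero scalar multiple of) $e_i$, hence an isomorphism in $\cA_\Gamma$, so that \cref{lem:gaussianelimination} applies and cancels the degree-$0$ copy of $P_i$ against a degree-$(-1)$ copy. This is exactly the content of the construction of $\sigma_s$ via a cone of the map \eqref{eq:addingPabove}, where the top complex was built to map nontrivially by identity-type maps onto the summands of $C^\bullet$; I expect this to be the main (though still routine) obstacle, and everything else is bookkeeping with the bilinear form. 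Alternatively, one can avoid recomputing $\sigma_i(P_j)$ from scratch by invoking \cref{ex:computeA2braid} for the $A_2$ (and trivially the $A_1$) sub-pictures and noting that the action on $[P_j]$ only ever involves $i$ and its neighbours, but the direct computation above is cleaner to write out in general.
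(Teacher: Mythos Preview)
Your proposal is correct and is exactly the ``direct computation on the basis elements $[P_i]$'' that the paper alludes to in the sentence preceding the proposition; the paper does not spell out a proof beyond that remark. Your case-by-case computation of $\sigma_i(P_j)$, the passage to $K_0$, and the specialisation $q=-1$ matching the reflection formula are all accurate (in particular the Gaussian elimination for $\sigma_i(P_i)\cong P_i\langle 2\rangle[1]$ is exactly as you describe, since the component $P_i\to P_i$ is the identity $e_i$ by construction).
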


\section{Hearts and linear complexes} \label{sec:heartlinear}
In this section, we introduce the notion of hearts and linear complexes, which will be used in the study of stability conditions in later sections.

\begin{definition}[\protect{\cite{BBD_faisceaux}}]
A \textit{heart} (of a bounded $t$-structure) in a triangulated category $\cD$ is a full additive subcategory $\cH$ such that:
	\begin{enumerate}[(1)]
		\item If $k_1>k_2$ then $\Hom_{\cD}(H_1[k_1],H_2[k_2])=0$ for all objects $H_1, H_2$ in $\cH$.
		\item For any object $X$ in $\cD$ there is a sequence of exact triangles called the \emph{heart filtration} with respect to $\cH$:
		\begin{equation} \label{eq:heartfiltration}
\begin{tikzcd}[column sep = 3mm]
0
	\ar[rr] &
{}
	{} &
X_1
	\ar[rr] \ar[dl]&
{}
	{} &	
X_2
	\ar[rr] \ar[dl]&
{}
	{} &	
\cdots
	\ar[rr] &
{}
	{} &
X_{m-1}
	\ar[rr] &
{}
	{} &
X_m = X
	\ar[dl] \\
{}
	{} &
A_1
	\ar[lu, dashed] &
{}
	{} &
A_2
	\ar[lu, dashed] &
{}
	{} &
{}
	{} &
{}
	{} &
{}
	{} &
{}
	{} &
A_m
	\ar[lu, dashed] &
\end{tikzcd}
\end{equation}
		such that $A_i\in \cH[k_i]$ and $k_1>k_2>\cdots > k_m$.
	\end{enumerate}
Note that the hom vanishing condition in (1) implies that the heart filtration is unique up to isomorphism.
\end{definition}
One should think of a choice of heart as a way to ``cut'' objects in $\cD$ uniquely into a filtration of ``shifted heart pieces''.

In fact, $\cH$ is an \emph{abelian category}; once again, we omit the full definition of an abelian category, but we point out that this means in particular that $\cH$ has \emph{exact sequences}, which are given by
\begin{equation}\label{eq:exactseqtriangle}
0 \ra X \xra{f} Y \xra{g} Z \ra 0 \text{ is exact in } \cH \iff X \xra{f} Y \xra{g} Z \ra X[1] \text{ is an exact triangle in } \cD.
\end{equation}
\begin{exercise} \label{ex:autoheartisheart}
Suppose $\cH$ is a heart in $\cD$ and $\sigma\colon \cD \ra \cD$ is an exact endofunctor that is invertible (an autoequivalence).
Show that $\sigma(\cH) \subseteq \cD$ is again a heart in $\cD$.
\end{exercise}
Our category $\cK_\Gamma$ comes with a standard heart that can be defined as follows.
\begin{definition}
Let $C^\bullet$ be a minimal complex in $\cK_{\Gamma}$.
We say that $C^\bullet$ is \emph{linear} if for each $j \in \bbZ$, every indecomposable summand of $C^j$ is of the form $P_i\<-j\>$ for some $i \in \Gamma_0$.
Similarly, a complex in $\cK_{\Gamma}$ is linear if it has a minimal complex that is linear (this is independent of the choice of minimal complex by \cref{prop:minimaliso}).
The smallest full additive subcategory of $\cK_{\Gamma}$ containing all linear complexes will be denoted by $\cH_{lin}$.
The category $\cH_{lin}$ forms a heart in $\cK_{\Gamma}$, and shall be called the \emph{heart of linear complexes} (see \cref{ex:heartlinearcomplex}).
\end{definition}
Note that $\cH_{lin}$ is closed under \emph{linear shifts} $\<m\>[m]$ for all $m \in \bbZ$; however, it is not closed under $\<m\>$ nor $[m]$ separately (except for $m=0$, of course).
\begin{example}[and non-example]
For $\Gamma=A_2$, the complex 
\[
0 \ra P_1\<1\> \xra{(1|2)} P_2 \ra 0 \qquad \text{(linear)}
\]
with $P_2$ sitting in cohomological degree zero is linear, whereas 
\[
\sigma_1(0 \ra P_1\<1\> \xra{(1|2)} P_2 \ra 0) \cong 0 \ra P_1\<3\> \xra{X_1} P_1\<1\> \xra{(1|2)} P_2 \ra 0 \qquad \text{(not linear)}
\]
is not.
\end{example}
More generally, the morphisms appearing in the differentials of a minimal complex must be scalar multiples (including zero) of length one admissible walks $(i|j)$.
\begin{exercise}\label{ex:heartlinearcomplex}
\begin{enumerate}[(i)]
\item Show that up to linear shifts $\<m\>[m]$, the indecomposable, minimal linear complexes of $\cK_{A_2}$ are (up to homotopy equivalence):
\begin{equation}\label{eq:indecomplinearA2}
P_1, P_2, \sigma_1(P_2) = 0 \ra P_1\<1\> \xra{(1|2)} P_2 \ra 0, \sigma_2(P_1) = 0 \ra P_2\<1\> \xra{(2|1)} P_1 \ra 0.
\end{equation}
Using this, write out the heart filtration with respect to $\cH_{lin}$ for the (non-linear) complex
\[
\sigma_1(0 \ra P_1\<1\> \xra{(1|2)} P_2 \ra 0) \cong 0 \ra P_1\<3\> \xra{X_1} P_1\<1\> \xra{(1|2)} P_2 \ra 0.
\]
\item Check that $\cH_{lin}$ is indeed a heart in $\cK_{\Gamma}$ (you may choose to only do this for $\Gamma = A_2$).
\item Using \cref{ex:autoheartisheart}, we know that $\sigma_1(\cH_{lin})$ is a (different) heart in $\cK_{\Gamma}$.
For example, the complex $\sigma_1(0 \ra P_1\<1\> \xra{(1|2)} P_2 \ra 0)$ is, by definition, in the heart $\sigma_1(\cH_{lin})$ and so its heart filtration \emph{with respect to $\sigma_1(\cH_{lin})$} is trivial. 
For the four indecomposable objects in \eqref{eq:indecomplinearA2}, work out their heart filtrations with respect to this new heart $\sigma_1(\cH_{lin})$.
\end{enumerate}
\end{exercise}
\section{The moduli space of stability conditions \texorpdfstring{$\Stab(\cK_{\Gamma})$}{Stab(K)}} \label{sec:stab}
We briefly introduce the notion of Bridgeland's stability conditions \cite{bridgeland_2007} in this section, focusing mainly on our triangulated category of interest $\cK_\Gamma$. 
We refer the interested reader to \cite{bayer11} for a slightly more expanded introduction to stability conditions.

From here on, we will fix the following conventions.
$\cD$ will always denote a triangulated category (you may take $\cD = \cK_{\Gamma}$).
We assume throughout that we have fixed a quotient $v\colon K_0(\cD) \ra \Lambda$ onto some finite rank free abelian group $\Lambda$; in our case $\cD = \cK_{\Gamma}$, we have $\Lambda \coloneqq \Lambda_{\Gamma}$ with $v$ given by evaluating $q=-1$.
Given any morphism of abelian groups $Z\in \Hom_{\Z}(\Lambda,\C)$, for each object $X \in \cD$, we will also write
\[
Z(X) \coloneqq Z(v([X])).
\]
\begin{definition}[\protect{\cite{bridgeland_2007}}] \label{defn:heartstability}
Let $\cH$ be a heart in $\cD$.
A \emph{stability function} on $\cH$ is a group homomorphism $Z\in \Hom_{\Z}(\Lambda,\C)$ such that for all non-zero objects $E$ in $\cH \subseteq \cD$, we have $Z(E)$ lying in the strict upper-half plane union the strictly negative real line:
\[
\mathbb{H} \cup \R_{<0} = \{re^{i\pi\phi} : r \in \R_{> 0} \text{ and } 0 < \phi \leq 1\}.
\]
Expressing $Z(E) = m(E)e^{i\pi\phi}$ as above, we call $\phi$ the \emph{phase} of $E$, which we denote by $\phi(E)$.
We say that an object $E\in \cH$ is $Z$-\emph{semistable} (resp.\ $Z$-\emph{stable}) if every object $U \in \cH$ fitting into an exact sequence $0 \ra U \ra E \ra V \ra 0$ in $\cH$ satisfies $\phi(U) \leq$ (resp.\ $<$) $\phi(E)$. \\
A stability function is said to have the \emph{Harder-Narasimhan (HN) property} if every object $A\in \cH$ has a filtration by exact sequences (viewed as exact triangles in $\cD$; see \eqref{eq:exactseqtriangle})
\[
\begin{tikzcd}[column sep = 3mm]
0
	\ar[rr] &
{}
	{} &
A_1
	\ar[rr] \ar[dl]&
{}
	{} &	
A_2
	\ar[rr] \ar[dl]&
{}
	{} &	
\cdots
	\ar[rr] &
{}
	{} &
A_{m-1}
	\ar[rr] &
{}
	{} &
A_m = A
	\ar[dl] \\
{}
	{} &
S_1
	\ar[lu, dashed] &
{}
	{} &
S_2
	\ar[lu, dashed] &
{}
	{} &
{}
	{} &
{}
	{} &
{}
	{} &
{}
	{} &
S_m
	\ar[lu, dashed] &
\end{tikzcd}
\]
such that all $S_i \in \cH$ are semistable and $\phi(S_i) > \phi(S_{i+1})$.
Note that this filtration, if it exists, is unique up to isomorphism.
Hence we can define $\phi_+(A) \coloneqq \phi(S_1)$ and $\phi_-(A) \coloneqq \phi(S_m)$.
\end{definition}
\begin{remark}\label{rem:HNautomatic}
The HN property is automatically satisfied if the heart is a finite-length abelian category (i.e.\ every object in the heart has a Jordan--H\"older filtration of finite length).
The heart of linear complexes $\cH_{lin} \subseteq \cK_{\Gamma}$ is finite-length for all $\Gamma$ -- and so is $\sigma(\cH_{lin})$ for any invertible, exact endofunctor $\sigma$.
As such, any stability function on $\sigma(\cH_{lin})$ will satisfy the HN property.
\end{remark}

\begin{definition}[\protect{\cite{bridgeland_2007}}]\label{defn:stabcondusingheart}
A \emph{stability condition} of a triangulated category $\cD$ is a choice of a heart $\cH \subseteq \cD$ together with a stability function on $\cH$ satisfying the HN property.
\end{definition}

\begin{example} \label{eg:A2stab}
Consider the heart of linear complexes $\cH_{lin}$ in $\cK_{A_2}$.
Any group homomorphism $Z\in \Hom_{\bbZ}(\Lambda_{A_2}, \bbC)$ is uniquely determined by $Z(P_1)$ and $Z(P_2)$.
Moreover, $Z$ is a stability function on $\cH_{lin}$ if and only if
\[
Z(P_1), Z(P_2) \in \bbH \cup \bbR_{<0};
\]
the requirement that all non-zero objects in $\cH_{lin}$ are mapped to $\bbH \cup \bbR_{<0}$ follows immediately. Indeed, the $Z$-image of all objects in $\cH_{lin}$ live in the convex cone of $Z(P_i)$.
As discussed in \cref{rem:HNautomatic}, any stability function on $\cH_{lin}$ will satisfy the HN property, and thus defines a stability condition on $\cK_{A_2}$.
Consider the stability function $Z$ defined as follows:
\[
\begin{tikzpicture}[scale = 1.65]
    \coordinate (Origin)   at (0,0);
    \coordinate (XAxisMin) at (-1.5,0);
    \coordinate (XAxisMax) at (1.5,0);
    \coordinate (YAxisMin) at (0,-.5);
    \coordinate (YAxisMax) at (0,1.5);
    \coordinate (na1)      at (-1,0);

    \draw [thin, gray,-latex] (XAxisMin) -- (XAxisMax);
    \draw [thin, gray,-latex] (YAxisMin) -- (YAxisMax);

	\draw[blue, thick, ->] (0,0) -- ++(5:1);
	\node[blue] (P1) at ($(Origin) + (10:1.25)$) 
		{\scriptsize $Z(P_1)$};
	
	\draw[blue, thick, ->] (0,0) -- ++(125:1);
	\node[blue] (P2) at ($(Origin) + (121:1.1)$) 
		{\scriptsize $Z(P_2)$};
  \end{tikzpicture}
\]
In this case, up to linear shifts $\<k\>[k]$, we have 3 $Z$-(semi)stable objects that are minimal: $P_1$, $P_2$ and $0 \ra P_2\<1\> \xra{(2|1)} P_1 \ra 0$ (where $P_1$ sits in cohomological degree zero).
\end{example}
\begin{exercise}
Now define the stability function $Z'$ with $Z'(P_1) = Z(P_2)$ and $Z'(P_2) = Z(P_1)$. What are the $Z'$-(semistable) objects?
What about $Z''$ with $Z''(P_1) = Z''(P_2) \in \bbH \cup \bbR_{<0}$?
\end{exercise}

\begin{definition}
Let $Z$ be a stability function on a heart $\cH$ of $\cD$ satisfying the HN property; i.e.\ we have a stability condition on $\cD$.
For each $0 < \phi \leq 1$, let $\cP(\phi)$ denote the full additive subcategory of $\cH \subseteq \cD$ containing all the $Z$-semistable objects in $\cH$ of phase $\phi$; if there are no $Z$-semistable objects of phase $\phi$, then $\cP(\phi)$ contains only the zero object.
For all other $\phi \in \bbR$, we define $\cP(\phi):= \cP(\phi-m)[m]$ with (the unique) $m \in \bbZ$ such that $0 < \phi-m \leq 1$.
The collection of full additive subcategories $\cP = \{\cP(\phi) \subseteq \cD \mid \phi \in \bbR\}$ of the stability condition is called a \emph{slicing} of the stability condition.
\end{definition}

One can check that a slicing satisfies the following:
\begin{enumerate}
\item $\cP(\phi + 1) = \cP(\phi)[1]$;
\item $\Hom( \cP(\phi_1), \cP(\phi_2)) = 0$ for all $\phi_1 > \phi_2$; and
\item for each object $X \in \cD$, there exists a filtration of $X$:
\[
\begin{tikzcd}[column sep = 3mm]
0
	\ar[rr] &
{}
	{} &
X_1
	\ar[rr] \ar[dl]&
{}
	{} &	
X_2
	\ar[rr] \ar[dl]&
{}
	{} &	
\cdots
	\ar[rr] &
{}
	{} &
X_{m-1}
	\ar[rr] &
{}
	{} &
X_m = X
	\ar[dl] \\
{}
	{} &
E_1
	\ar[lu, dashed] &
{}
	{} &
E_2
	\ar[lu, dashed] &
{}
	{} &
{}
	{} &
{}
	{} &
{}
	{} &
{}
	{} &
E_m
	\ar[lu, dashed] &
\end{tikzcd}
\]
such that $E_i \in \cP(\phi_i)$ and $\phi_1 > \phi_2 > \cdots > \phi_m$.
\end{enumerate}
The filtration in (iii) is obtained by refining the heart filtration of $X$ (from \eqref{eq:heartfiltration}) using the HN filtration of each $A_i$ (see \cref{app:refinement} for more details about refinement); this filtration is called the \emph{Harder-Narasimhan (HN) filtration} of $X \in \cD$ and it is also unique up to isomorphism.
As in the case for $\cH$, for objects $X \in \cD$, we define $\phi_+(X) \coloneqq \phi_1$ and $\phi_-(X) \coloneqq \phi_m$.
\begin{remark}
In fact, the definition of a slicing of $\cD$ is actually just a collection of full additive subcategory $\cP = \{\cP(\phi) \subseteq \cD \mid \phi \in \bbR\}$ of $\cD$ satisfying conditions (i)--(iii) above.
Bridgeland showed \cite[Proposition 5.3]{bridgeland_2007} that one can equivalently define a stability condition as a pair $(\cP, Z)$ of a slicing $\cP$ and a group homomorphism $Z \in \Hom_{\bbZ}(\Lambda, \bbC)$ called the central charge, which satisfies the condition that any non-zero $E \in \cP(\phi)$ has its central charge given by
\[
Z(E) = m(E)e^{i\pi\phi} \quad \text{ for some } m(E) \in \R_{>0}.
\]
(In fact, this was the original definition.)
In particular, one recovers the heart by taking the smallest additive subcategory (usually denoted by $\cP(0,1]$) containing all objects $A \in \cD$ such that the phases of the semistable pieces all live in $(0, 1]$.
The stability function is then given by the central charge $Z$.
\end{remark}
\begin{definition}
Let $\Stab(\cD)$ denote the set of stability conditions\footnote{Technically, for the results that follow, one requires a technical assumption on stability conditions known as \emph{locally-finiteness}, or the \emph{support property} (stronger). In these notes, we will only discuss stability conditions for $\Gamma$ being ADE, where these conditions will be superfluous.} on $\cD$.
Viewing elements in $\Stab(\cD)$ as a pair $(\cP, Z)$ consisting of a slicing $\cP$ and a stability function $Z \in \Hom_{\bbZ}(\Lambda, \bbC)$ on $\cH$, we shall define a topology on $\Stab(\cD)$ as follows.
Let $(\cP, Z)$ and $(\cQ, Z')$ be two stability conditions.
We define a generalised metric\footnote{A generalised metric $d(-,-)$ satisfies all properties of a metric except possibly taking infinite value. In particular, it defines a topology in the usual way, where $d(\xi, \varsigma) = \infty$ implies $\xi$ and $\varsigma$ live on different connected components.} on the set of slicings:
\[
d(\cP,\cQ):= \sup_{0\neq E \in \cD} \left\{
	|\phi_-^\cP(E) - \phi_-^{\cQ}(E)|, |\phi_+^\cP(E) - \phi_+^{\cQ}(E)|
	\right\}.
\]
This induces a topology on the set of slicings, and $\Hom_{\bbZ}(\Lambda, \bbC) \cong \bbC^{m}$ ($m$ is the rank of $\Lambda$) is equipped with the standard topology as $\bbC$-vector space.
The topology on $\Stab(\cD)$ is defined to be the coarsest topology such that both forgetful maps $(\cP, Z) \mapsto \cP$ and $(\cP, Z) \mapsto Z$ are continuous.
\end{definition}
The following lemma will be useful in patching together subsets of stability conditions:
\begin{lemma}\label{lem:uniqueslicing<1}
Let $(\cP, Z)$ and $(\cQ, Z)$ be two stability conditions with the same central charge $Z$. 
If $d(\cP,\cQ) < 1$, then $\cP = \cQ$.
\end{lemma}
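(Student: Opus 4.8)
The plan is to show that the semistable objects of $(\cP,Z)$ and $(\cQ,Z)$ coincide phase-by-phase, which forces the two slicings to agree. Since $Z$ is common to both, the numbers $m(E) > 0$ and the "target phase" $\phi$ with $Z(E) = m(E)e^{i\pi\phi}$ are fixed the moment we know $E$ is semistable in a given slicing; the only freedom is in \emph{which} objects are declared semistable and in what phase. So the real content is: if $E \in \cP(\phi)$ is $\cP$-semistable, then $E$ is also $\cQ$-semistable (necessarily of the same phase $\phi$, by compatibility with $Z$), and vice versa.

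First I would fix $0 \neq E \in \cP(\phi)$ and look at its HN filtration with respect to $\cQ$, with semistable factors $E_1, \dots, E_m$ of phases $\psi_1 > \dots > \psi_m$, so that $\phi_+^{\cQ}(E) = \psi_1$ and $\phi_-^{\cQ}(E) = \psi_m$. Because $E$ is $\cP$-semistable we have $\phi_+^{\cP}(E) = \phi_-^{\cP}(E) = \phi$. The hypothesis $d(\cP,\cQ) < 1$ gives $|\psi_1 - \phi| < 1$ and $|\psi_m - \phi| < 1$, hence $\psi_1 - \psi_m < 2$. Now I would use compatibility of each slicing with the \emph{same} $Z$: the central charge of $E$ is, on one hand, $m(E)e^{i\pi\phi}$ (read off from $\cP$), and on the other hand $\sum_i Z(E_i) = \sum_i m(E_i) e^{i\pi\psi_i}$ (read off from $\cQ$). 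All the $m(E_i)$ are strictly positive and all the $\psi_i$ lie in the open window $(\psi_m, \psi_1)$ of length $< 2$; in such a window the rays $e^{i\pi\psi_i}$ span strictly less than a half-plane, so no non-trivial positive combination of them can vanish, and in fact the argument of the sum $\sum_i m(E_i)e^{i\pi\psi_i}$ is \emph{strictly} squeezed between $\psi_m$ and $\psi_1$ unless $m=1$. Comparing arguments, the sum has argument $\phi$, so we must have $\psi_m \leq \phi \leq \psi_1$ with both inequalities strict when $m>1$ — but that would force $\psi_1 - \psi_m$ to straddle a window around $\phi$, and I would iterate the same comparison after peeling off the top piece, or more cleanly, observe that $\psi_1 > \phi > \psi_m$ together with $|\psi_1 - \phi| < 1$, $|\psi_m - \phi| < 1$ still needs to be contradicted; the decisive point is that a sum of at least two positive-length vectors with distinct arguments in a window of length $<2$ cannot have the same argument as one would need, because the argument of the sum is a strict convex-type average that cannot equal a value achieved only "from one side" — here the correct sharp statement is that if $Z(E) = m(E)e^{i\pi\phi}$ and also $Z(E) = \sum m(E_i) e^{i\pi\psi_i}$ with $\psi_1 > \dots > \psi_m$ all within distance $<1$ of $\phi$, then necessarily $\psi_1 = \dots = \psi_m = \phi$. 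This last elementary planar fact is what I would isolate as a sublemma and prove by the half-plane/argument-strictly-between estimate.

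With that sublemma in hand, every $\cP$-semistable $E$ has all its $\cQ$-HN factors of the same phase $\phi$, so $E$ is $\cQ$-semistable of phase $\phi$, i.e.\ $\cP(\phi) \subseteq \cQ(\phi)$ for every $\phi$; by the symmetric argument $\cQ(\phi) \subseteq \cP(\phi)$, hence $\cP(\phi) = \cQ(\phi)$ for all $\phi \in \bbR$ and $\cP = \cQ$. (For the integer-shift bookkeeping I would first reduce to $\phi \in (0,1]$ using $\cP(\phi+1) = \cP(\phi)[1]$ and the same for $\cQ$, so there is no loss of generality.)

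The main obstacle is the planar sublemma: getting the inequality $d(\cP,\cQ)<1$ to propagate all the way to "all HN phases equal $\phi$" rather than merely "within distance $1$". The subtlety is that knowing $\phi_{\pm}^{\cQ}(E)$ is within $1$ of $\phi$ for \emph{every} $E$ (not just the fixed one) is what lets one run the argument on the HN factors $E_i$ themselves — each $E_i$ is $\cQ$-semistable, hence one can compare $\phi^{\cQ}(E_i) = \psi_i$ with $\phi_{\pm}^{\cP}(E_i)$, and feed that back. So the care needed is in choosing the right object to test the metric on at each stage; once the bookkeeping is set up, the geometry is just "a positive sum of vectors in an open half-plane is non-zero and its argument lies strictly inside the angular span."
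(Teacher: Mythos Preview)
The paper states this lemma without proof (it is Bridgeland's Lemma~6.4), so there is nothing on the paper side to compare against. Your overall plan---show that each $\cP$-semistable $E$ of phase $\phi$ is also $\cQ$-semistable of the same phase by examining its $\cQ$-HN factors---is correct and is how the standard proof begins. The gap is the ``planar sublemma'': it is \emph{false}. A phase window of width $<2$ corresponds to an angular window of width $<2\pi$ for the rays $e^{i\pi\psi_i}$, i.e.\ the whole circle minus a point, \emph{not} a half-plane; the half-plane/strict-convexity estimate you invoke would need the phases confined to a window of width $<1$. For a concrete counterexample take $\phi=0$, $\psi_1=0.3$, $\psi_2=-0.3$, $m_1=m_2=1$: then
\[
e^{0.3\pi i}+e^{-0.3\pi i}=2\cos(0.3\pi)\in\bbR_{>0}
\]
has phase $0=\phi$, while $\psi_1\neq\psi_2\neq\phi$. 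What the central-charge identity $\sum m_i e^{i\pi\psi_i}=m\,e^{i\pi\phi}$ with $\psi_i\in(\phi-1,\phi+1)$ actually yields (take imaginary parts after rotating by $e^{-i\pi\phi}$, and use that $\sin(\pi t)$ has the sign of $t$ on $(-1,1)$) is only the dichotomy: either all $\psi_i=\phi$, or $\psi_n<\phi<\psi_1$.

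Eliminating the second branch is where the real work lies, and it cannot be done by planar geometry alone. You correctly sense this in your final paragraph---one must apply the distance bound to the $\cQ$-semistable pieces $A_i$ themselves and bring in the Hom-vanishing $\Hom_\cD(\cP(>\phi),\cP(\phi))=0$ together with the nonzero morphisms $A_1\to E$ and $E\to A_n$ supplied by the HN filtration---but you do not actually carry this out; you instead fall back on the false geometric statement. As written, the proof rests on a lemma that has explicit counterexamples, so there is a genuine gap.
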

Note that if $d(\cP, \cQ) \geq 1$, then the above may not hold:
\begin{exercise}
Let $Z$ be any stability function on $\cH_{lin} \subseteq \cK_{A_2}$.
\begin{enumerate}
\item Show that $Z$ defines a stability function on the heart $\sigma_i^2(\cH_{lin})$. (Hint: what is the action of $\sigma_i^2$ on $K_0(\cK_\Gamma)$?)
\item Let $\cP$ and $\cQ$ be the slicings defined by the (same) stability function $Z$ on $\cH_{lin}$ and $\sigma_i^2(\cH_{lin})$ respectively. Show that $d(\cP,\cQ) > 1$. (Hint: Compare the phase of $P_1$ in $\cP$ and $\cQ$.)
\end{enumerate}
\end{exercise}
The following deformation result allows one to (locally) deform $Z$ to obtain new stability conditions:
\begin{lemma}[\protect{\cite[Theorem 7.1]{bridgeland_2007}}]\label{lem:deformation}
Let $\xi = (\cP, Z)$ be a stability condition. 
For all $0<\varepsilon<1/8$, if $W \in \Hom_{\Z}(\Lambda,\C)$ satisfies
\begin{equation} \label{eq:deformcondition}
|W(E) - Z(E)|<\sin(\varepsilon \pi)|Z(E)|
\end{equation}
for all non-zero $E \in \cP(\phi)$ and all $\phi \in \bbR$, then there exists a slicing $\cQ$ so that $\varsigma = (\cQ, W)$ forms a stability condition on $\cD$ with $d(\cP,\cQ) < \varepsilon$.
\end{lemma}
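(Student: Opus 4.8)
The plan is to follow Bridgeland's original argument \cite{bridgeland_2007}: the point is that \eqref{eq:deformcondition} forces $W$ to move the phases of $\cP$-semistable objects by less than $\varepsilon$, and this is rigid enough to let one regroup the slices of $\cP$ into a new slicing $\cQ$ with central charge $W$. \emph{Step 1 (geometric content of the hypothesis).} For $0\neq E\in\cP(\phi)$ write $Z(E)=|Z(E)|e^{i\pi\phi}$; the Euclidean distance from $Z(E)$ to each of the rays $\bbR_{>0}e^{i\pi(\phi\pm\varepsilon)}$ is exactly $\sin(\varepsilon\pi)|Z(E)|$, which since $0<\varepsilon<1/8$ is positive and strictly less than $|Z(E)|$. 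Hence the open disc of radius $\sin(\varepsilon\pi)|Z(E)|$ about $Z(E)$ avoids both rays and the origin, so it is contained in the open sector $\{w\neq 0:\ |\tfrac1\pi\arg w-\phi|<\varepsilon\}$; by \eqref{eq:deformcondition} this means $W(E)\neq 0$ and its phase $\phi_W(E)$ lies in $(\phi-\varepsilon,\phi+\varepsilon)$.

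\emph{Step 2 (candidate slices).} For a real interval $I$ of length $<1$, write $\cP(I)\subseteq\cD$ for the extension-closed full subcategory generated by the $\cP(\psi)$, $\psi\in I$; recall it is a quasi-abelian category whose strict short exact sequences are exactly the triangles of $\cD$ with all three vertices in $\cP(I)$, and it is of finite length in our setting (cf.\ \cref{rem:HNautomatic}). Fix $\psi\in\bbR$ and put $I_\psi:=(\psi-\varepsilon,\psi+\varepsilon)$. Every object of $\cP(I_\psi)$ is an iterated extension of $\cP$-semistables of phase in $I_\psi$, and by Step 1 the $W$-image of each such lies in the open sector of opening $4\varepsilon\pi<\pi$ about $\bbR_{>0}e^{i\pi\psi}$; so $W$ carries every non-zero object of $\cP(I_\psi)$ into that sector and, after a suitable rotation, becomes a stability function on the finite-length quasi-abelian category $\cP(I_\psi)$, hence has the Harder--Narasimhan property there. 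Define $\cQ(\psi)$ to be the full subcategory of objects of $\cP(I_\psi)$ that are $W$-semistable of phase exactly $\psi$ (together with $0$), and extend by $\cQ(\psi+n):=\cQ(\psi)[n]$. That the two descriptions of $\cQ$ at a given real number agree follows from $\cP(I_{\psi+n})=\cP(I_\psi)[n]$, $W(E[1])=-W(E)$, and uniqueness of Harder--Narasimhan decompositions in $\cP(I_\psi)$.

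\emph{Step 3 (the slicing axioms).} Axiom $\cQ(\psi+1)=\cQ(\psi)[1]$ is immediate. For Hom-vanishing, take $E\in\cQ(\psi_1)$, $F\in\cQ(\psi_2)$ and set $m:=\psi_1-\psi_2>0$: if $m\geq 2\varepsilon$, every $\cP$-HN factor of $E$ has $\cP$-phase $>\psi_1-\varepsilon\geq\psi_2+\varepsilon$, which exceeds the $\cP$-phase of every $\cP$-HN factor of $F$, so $\Hom(E,F)=0$ by the $\cP$-slicing; if $0<m<2\varepsilon$, then $E$ and $F$ both lie in the quasi-abelian slice $\cP\big((\psi_2-\varepsilon,\psi_1+\varepsilon)\big)$, an interval of length $m+2\varepsilon<1$ on which $W$ is again a stability function, they remain semistable there (membership in a semistable slice is independent of the ambient slice of length $<1$) of distinct phases, and the usual see-saw argument on strict images gives $\Hom(E,F)=0$. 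The Harder--Narasimhan property is the substantial point: for $0\neq X\in\cD$, start from its (finite) $\cP$-HN filtration; each graded piece $E_i\in\cP(\phi_i)$ has, inside the corresponding $\cP(I_{\phi_i})$, a finite $W$-HN filtration whose factors have phase in $(\phi_i-\varepsilon,\phi_i+\varepsilon)$, so only finitely many $\cQ$-phases $\psi_1>\cdots>\psi_k$ occur in total. Picking separating reals $\mu_j\in(\psi_{j+1},\psi_j)$, I build for each $j$ a truncation triangle $X^{>\mu_j}\to X\to X^{\leq\mu_j}$ with $X^{>\mu_j}$ an iterated extension of $\cQ(\alpha)$'s with $\alpha>\mu_j$ and $X^{\leq\mu_j}$ one with $\alpha\leq\mu_j$, as follows: $\cP$-truncate $X$ at the phases $\mu_j+\varepsilon$ and $\mu_j-\varepsilon$; the outer two pieces already sit on the correct side of $\mu_j$ (their $\cQ$-phases are confined to windows $(\,\cdot\,-\varepsilon,\,\cdot\,+\varepsilon)$ about $\cP$-phases $>\mu_j+\varepsilon$, resp.\ $\leq\mu_j-\varepsilon$); the ``thick middle'' piece lies in the quasi-abelian slice $\cP\big((\mu_j-\varepsilon,\mu_j+\varepsilon]\big)$ of length $2\varepsilon<1$, and its $W$-HN filtration splits it at phase $\mu_j$; assembling the three parts by the octahedral axiom yields the desired triangle, unique by the Hom-vanishing just proved. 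Chaining these over $j=1,\dots,k$ produces a filtration of $X$ with $j$-th graded piece in $\cQ(\psi_j)$, i.e.\ a $\cQ$-HN filtration. Thus $\varsigma=(\cQ,W)$ is a stability condition.

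\emph{Step 4 (the estimate) and the main obstacle.} For $0\neq E\in\cD$, the top $\cP$-HN factor $E_1\in\cP(\phi^\cP_+(E))$ of $E$ contributes a $\cQ$-HN factor of $E$ of phase $>\phi^\cP_+(E)-\varepsilon$, while every $\cQ$-HN factor of $E$ has phase $<\phi^\cP_+(E)+\varepsilon$ (it comes from a $\cP$-HN factor of $\cP$-phase $\leq\phi^\cP_+(E)$); so $|\phi^\cQ_+(E)-\phi^\cP_+(E)|<\varepsilon$, and symmetrically $|\phi^\cQ_-(E)-\phi^\cP_-(E)|<\varepsilon$. Taking the supremum and using local finiteness (automatic in our setting, so that the relevant phase-shifts range over a finite set) gives $d(\cP,\cQ)<\varepsilon$. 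I expect Step 3 --- turning the pointwise phase estimate of Step 1 into the global truncation triangles of the $\cQ$-HN filtration, with phases interlacing correctly --- to be the main obstacle; Steps 1, 2 and 4 are routine once one is comfortable manipulating the quasi-abelian slices $\cP(I)$.
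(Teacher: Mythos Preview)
The paper does not prove this lemma at all --- it is stated with a citation to \cite[Theorem 7.1]{bridgeland_2007} and used as a black box in the subsequent discussion. Your proposal is a faithful outline of Bridgeland's original argument from that reference (thin quasi-abelian slices $\cP(I)$, $W$-HN filtrations inside them, and octahedral reassembly), so in the only meaningful sense it takes the same approach as the source the paper defers to.
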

Note that \eqref{eq:deformcondition} is an open condition on $\Hom_{\bbZ}(\Lambda, \bbC)$, and it implies, in particular, that for any $E \in \cP(\phi)$ and any $\phi \in \bbR$:
\begin{itemize}
\item the phase of $W(E)$ and $Z(E)$ can only differ by at most $\varepsilon$; and
\item $W(E) \neq 0$.
\end{itemize}

Combining \cref{lem:uniqueslicing<1} and \cref{lem:deformation}, we obtain the main theorem of Bridgeland showing that $\Stab(\cD)$ forms a complex manifold.
\begin{theorem}[\protect{\cite[Theorem 1.2]{bridgeland_2007}}] \label{thm:stabmfld}
The space of stability conditions on $\cD$, $\Stab(\cD)$, is a complex manifold (of dimension equal the rank of $\Lambda$), with the forgetful map $\cZ$ defining the local homeomorphism 
\begin{align*}
\cZ\colon \Stab(\cD) &\ra \Hom_{\Z}(\Lambda,\C) \\
(\cP, Z) &\mapsto Z.
\end{align*}
\end{theorem}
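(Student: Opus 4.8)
The plan is to show that the forgetful map $\cZ$ is a local homeomorphism onto an open subset of $\Hom_{\Z}(\Lambda,\C)\cong\C^{m}$, where $m=\rk\Lambda$; pulling back charts along $\cZ$ then makes $\Stab(\cD)$ a complex manifold of dimension $m$, with all transition maps restrictions of the identity of $\C^{m}$ and hence biholomorphic. Hausdorffness is immediate, since the topology on $\Stab(\cD)$ is the initial topology for the two forgetful maps, equivalently the subspace topology under the injection $(\cP,Z)\mapsto(\cP,Z)$ into the product of the generalised metric space of slicings with $\C^{m}$.

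First I would fix $\xi=(\cP,Z)\in\Stab(\cD)$ and $0<\varepsilon<1/8$, and set $U_\xi:=\{(\cQ,W)\in\Stab(\cD):d(\cP,\cQ)<\varepsilon\}$, which is open. The map $\cZ|_{U_\xi}$ is injective: if $(\cQ,W)$ and $(\cQ',W)$ both lie in $U_\xi$, then $d(\cQ,\cQ')\le d(\cP,\cQ)+d(\cP,\cQ')<2\varepsilon<1$, so $\cQ=\cQ'$ by \cref{lem:uniqueslicing<1}; write $\cQ_W$ for this unique slicing. Next, $\cZ(U_\xi)$ is open: given $W_0\in\cZ(U_\xi)$, realised by the unique $(\cQ_{W_0},W_0)\in U_\xi$, choose $\varepsilon'<\min\{1/8,\ \varepsilon-d(\cP,\cQ_{W_0})\}$ and apply \cref{lem:deformation} at $(\cQ_{W_0},W_0)$ — the condition \eqref{eq:deformcondition} being open in the central charge, which is automatic for $\Gamma$ of type ADE — to get $\delta>0$ such that every $W$ with $\|W-W_0\|<\delta$ carries a slicing $\cQ$ with $(\cQ,W)\in\Stab(\cD)$ and $d(\cQ_{W_0},\cQ)<\varepsilon'$; then $d(\cP,\cQ)<\varepsilon$, so $(\cQ,W)\in U_\xi$ and the $\delta$-ball about $W_0$ lies in $\cZ(U_\xi)$.

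It then remains to check that $\cZ|_{U_\xi}^{-1}$ is continuous. Its $Z$-component is the identity, so I only need the slicing component $W\mapsto\cQ_W$ to be continuous. Given $\eta>0$ and $W_0\in\cZ(U_\xi)$, I would pick $\varepsilon'<\min\{\eta,\ 1/8,\ \varepsilon-d(\cP,\cQ_{W_0})\}$ and apply \cref{lem:deformation} at $(\cQ_{W_0},W_0)$: for $W$ close enough to $W_0$ there is a slicing $\cQ$ with $(\cQ,W)\in\Stab(\cD)$, $d(\cQ_{W_0},\cQ)<\varepsilon'$, and (as before) $(\cQ,W)\in U_\xi$; by injectivity $\cQ=\cQ_W$, whence $d(\cQ_{W_0},\cQ_W)<\varepsilon'<\eta$. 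Thus $\cZ|_{U_\xi}\colon U_\xi\to\cZ(U_\xi)$ is a homeomorphism onto an open set, and letting $\xi$ vary gives the required holomorphic atlas, with $\cZ$ visibly a local (bi)homeomorphism.

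The only genuinely substantive ingredient is the deformation \cref{lem:deformation}; granting it, the one delicate point is that the slicing produced by that lemma must agree with the one singled out by uniqueness inside $U_\xi$ — which is precisely where \cref{lem:uniqueslicing<1} enters — together with the harmless (in the ADE setting) fact that \eqref{eq:deformcondition} cuts out an open condition on the central charge. I expect no other obstacles.
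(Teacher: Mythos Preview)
Your proposal is correct and follows exactly the approach the paper indicates: the paper does not spell out a proof but simply states that the theorem is obtained by combining \cref{lem:uniqueslicing<1} and \cref{lem:deformation}, citing \cite{bridgeland_2007}. Your argument is precisely a careful fleshing-out of that combination --- using \cref{lem:uniqueslicing<1} for injectivity on the $\varepsilon$-neighbourhood $U_\xi$, \cref{lem:deformation} for openness of the image and continuity of the inverse, and correctly flagging that the openness of condition~\eqref{eq:deformcondition} is where the support/locally-finite hypothesis (superfluous in the ADE setting, as the paper notes) enters.
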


\begin{example} \label{eg:deformP1}
Let us continue with \cref{eg:A2stab}, where we have drawn all the central charges for the (semi)stable objects in $\cH_{lin} \subseteq \cK_{A_2}$:
\[
\begin{tikzpicture}[scale = 1.65]
    \coordinate (Origin)   at (0,0);
    \coordinate (XAxisMin) at (-1.5,0);
    \coordinate (XAxisMax) at (1.5,0);
    \coordinate (YAxisMin) at (0,-.5);
    \coordinate (YAxisMax) at (0,1.5);
    \coordinate (na1)      at (-1,0);

    \draw [thin, gray,-latex] (XAxisMin) -- (XAxisMax);
    \draw [thin, gray,-latex] (YAxisMin) -- (YAxisMax);

	\draw[blue, thick, ->] (0,0) -- ++(5:1);
	\node[blue] (s2s1s2a1) at ($(Origin) + (10:1.2)$) 
		{\scriptsize $Z(P_1)$};
		
	\draw[blue, thick, ->] (0,0) -- ++(65:1);
	\node[blue] (a2) at ($(Origin) + (61:1.2)$) 
		{\scriptsize $Z(\sigma_2(P_1))$};
	
	\draw[blue, thick, ->] (0,0) -- ++(125:1);
	\node[blue] (a2) at ($(Origin) + (121:1.1)$) 
		{\scriptsize $Z(P_2)$};
  \end{tikzpicture}
\]
Suppose we now deform $Z$ slightly ($\varepsilon$ should be taken to be very small) to obtain a new central charge $Z'$ such that $Z'(P_1)$ now has non-positive imaginary part (and $Z'(P_2) = Z(P_2)$):
\[
\begin{tikzpicture}[scale = 1.65]
    \coordinate (Origin)   at (0,0);
    \coordinate (XAxisMin) at (-1.5,0);
    \coordinate (XAxisMax) at (1.5,0);
    \coordinate (YAxisMin) at (0,-.5);
    \coordinate (YAxisMax) at (0,1.5);

    \draw [thin, gray,-latex] (XAxisMin) -- (XAxisMax);
    \draw [thin, gray,-latex] (YAxisMin) -- (YAxisMax);

	\draw[blue, dashed, ->] (0,0) -- ++(-20:1);
	\node[blue] (P1) at ($(Origin) + (-20:1.3)$) 
		{\scriptsize $Z'(P_1)$};
		
	\draw[blue, thick, ->] (0,0) -- ++(65:1);
	\node[blue] (s2P1) at ($(Origin) + (61:1.2)$) 
		{\scriptsize $Z'(\sigma_2(P_1))$};
	
	\draw[blue, thick, ->] (0,0) -- ++(125:1);
	\node[blue] (P2) at ($(Origin) + (121:1.1)$) 
		{\scriptsize $Z'(P_2)$};
	
	\draw[blue, thick, ->] (0,0) -- ++(160:1);
	\node[blue] (P1shift) at ($(Origin) + (160:1.4)$) 
		{\scriptsize $Z'(P_1[1])$};
	
	\draw[thin, gray, ->] (0,0) -- ++(5:1);
	\node[gray] (oldP1) at ($(Origin) + (10:1.2)$) 
		{\scriptsize $Z(P_1)$};
	
	\pic [draw, <-, "\scriptsize $\varepsilon \pi$", angle eccentricity=1.7] {angle=P1--Origin--oldP1};
  \end{tikzpicture}
\]
Clearly, $Z'$ can no longer be a stability function on $\cH_{lin}$, since $Z'(P_2)$ no longer lives in $\bbH \cup \bbR_{<0}$; however, we see that $Z'(P_1[1])$ enters the picture.
More precisely, with $\cP$ denoting the slicing induced from the stability function $Z$ on $\cH_{lin}$, we claim that $Z'$ is a stability function on the heart $\sigma_1^{-1}(\cH_{lin})$, such that the induced slicing $\cQ$ satisfies $d(\cP, \cQ) < 1$. \cref{lem:uniqueslicing<1} then implies that this is the unique lift of $Z'$ from the deformation result \cref{lem:deformation}.

Let us first understand $\sigma_1^{-1}(\cH_{lin})$: its four indecomposable objects (again, up to linear shifts and homotopy equivalence) are:
\begin{align*}
\sigma_1^{-1}(P_1)\<-2\>[-2] &\cong P_1[1]; \\
\sigma_1^{-1}(P_2)\<1\>[1] &\cong 0 \ra P_2\<1\> \xra{(2|1)} P_1 \ra 0 \cong \sigma_2(P_1); \\
\sigma_1^{-1}(\sigma_1(P_2)) &\cong P_2 \\
\sigma_1^{-1}(\sigma_2(P_1)) &\cong 0 \ra P_2\<1\> \xra{(2|1)} P_1 \xra{X_1} P_1\<-2\> \ra 0.
\end{align*}
Only the first three are $Z'$-semistable; we have an exact sequence in $\sigma_1^{-1}(\cH_{lin})$:
\[
0\ra P_1[1]\<-2\>[-2] \ra \sigma_1^{-1}(\sigma_2(P_1)) \ra \sigma_2(P_1) \ra 0.
\] 
If we denote the phases $0< \phi_1, \phi_{21}, \phi_2 \leq 1$ such that:
\[
P_1 \in \cP(\phi_1), \quad \sigma_2(P_1) \in \cP(\phi_{21}), \quad P_2 \in \cP(\phi_2),
\]
we now get
\[
P_1[1] \in \cQ(\phi_1 + 1 - \varepsilon), \quad \sigma_2(P_1) \in \cP(\phi_{21} - \varepsilon'), \quad P_2 \in \cP(\phi_2)
\]
with $0 < \varepsilon' < \varepsilon$.
In particular, we see that $d(\cP, \cQ) = \epsilon < 1$.
\end{example}

\begin{example}[Falling into a hole]
Note that while deforming the central charge $Z$ of a stability condition $(\cP,Z)$, one has to avoid ``falling into holes'': we can not deform into a central charge $W$ with $W(E)=0$ for some $E \in \cP(\phi)$ -- this violates \eqref{eq:deformcondition} in \ref{lem:deformation}.
For example, if we started with $Z(P_2)$ very close to the negative real line, we may have deformed $Z$ into $Z'$ such that $Z'(P_1[1]) = Z'(P_2)$:
\[
\begin{tikzpicture}[scale = 1.65]
    \coordinate (Origin)   at (0,0);
    \coordinate (XAxisMin) at (-1.5,0);
    \coordinate (XAxisMax) at (1.5,0);
    \coordinate (YAxisMin) at (0,-.5);
    \coordinate (YAxisMax) at (0,1.5);

    \draw [thin, gray,-latex] (XAxisMin) -- (XAxisMax);
    \draw [thin, gray,-latex] (YAxisMin) -- (YAxisMax);

	\draw[blue, thick, ->] (0,0) -- ++(160:1);
	\node[blue] (P2) at ($(Origin) + (150:1.1)$) 
		{\scriptsize $Z'(P_2)=Z'(P_1[1])$};
	
	\draw[blue, dashed, ->] (0,0) -- ++(-20:1);
	\node[blue] (P1) at ($(Origin) + (-20:1.3)$) 
		{\scriptsize $Z'(P_1)$};
\end{tikzpicture},
\]
In this case, we would have $Z'(\sigma_2(P_1)) = Z'(P_2) + Z'(P_1) = 0$, which is not allowed since $\sigma_2(P_1)$ was in $\cP(\phi_{21})$.
In fact, we see that $Z'$ also violates the condition of being a stability function on $\sigma_1^{-1}(\cH_{lin})$!
\end{example}

\begin{exercise}\label{ex:crossnegative}
Start instead with the following stability function on $\cH_{lin}$:
\[
\begin{tikzpicture}[scale = 1.65]
    \coordinate (Origin)   at (0,0);
    \coordinate (XAxisMin) at (-1.5,0);
    \coordinate (XAxisMax) at (1.5,0);
    \coordinate (YAxisMin) at (0,-.5);
    \coordinate (YAxisMax) at (0,1.5);

    \draw [thin, gray,-latex] (XAxisMin) -- (XAxisMax);
    \draw [thin, gray,-latex] (YAxisMin) -- (YAxisMax);

		
	\draw[blue, thick, ->] (0,0) -- ++(60:1);
	\node[blue] (P1) at ($(Origin) + (60:1.1)$) 
		{\scriptsize $Z(P_1)$};
	
	\draw[blue, thick, ->] (0,0) -- ++(125:1);
	\node[blue] (s2P1) at ($(Origin) + (121:1.1)$) 
		{\scriptsize $Z(\sigma_2(P_1))$};
	
	\draw[blue, thick, ->] (0,0) -- ++(180:1);
	\node[blue] (P2) at ($(Origin) + (175:1.3)$) 
		{\scriptsize $Z(P_2)$};
\end{tikzpicture}
\]
Following \cref{eg:deformP1}, work out the the slight deformation where $Z(P_2)$ crosses the negative real line instead.
\end{exercise}

\section{Covering structure of  \texorpdfstring{$\Stab(\cK_{\Gamma})$}{Stab(K)}} \label{sec:coveringstab}
For this section, we will focus on the case where $\Gamma$ is an ADE Coxeter diagram.
The aim of this section is to show that $\cZ: \Stab(\cK_{\Gamma}) \ra \Hom_\bbZ(\Lambda_\Gamma, \bbC)$ is a covering map onto its image, where its image is exactly the hyperplane complement associated to the Coxeter group $\bbW(\Gamma)$ (through the contragradient representation).
The proofs presented here follow (an appropriate modification of) the proofs in \cite{bridgeland2009kleinian,ikeda2014stability}.
The upshot is that general results about the triangulated category $\cK_\Gamma$ will imply that $\Stab(\cK_\Gamma)$ is a contractible space, which proves that $\cZ$ is actually a universal cover and moreover (re-)proves the $K(\pi,1)$ conjecture for $\Gamma$ of ADE types.
We note here that the proof obtained will be independent of Deligne's original proof \cite{Deligne_72}.

Recall from \cref{thm:braidaction} and \cref{ex:actionexact} that we have an action of $\bbB(\Gamma)$ on $\cK_{\Gamma}$ by invertible, exact endofunctors.
Moreover, for each $\beta \in \bbB(\Gamma)$, $\beta(\cH_{lin})$ is again a heart in $\cK_\Gamma$, where any stability function on it would define a stability condition on $\cK_{\Gamma}$ (HN property comes for free in our setting).
More generally, one can define an action of $\beta \in \bbB(\Gamma)$ on $\Stab(\cK_\Gamma)$ as follows.
\begin{definition}\label{defn:actionstab}
Let $\beta \in \bbB(\Gamma)$.
For each stability condition $\varsigma = (\cP,Z)$ defined by a stability function $Z$ on some heart $\cH \subset \cK_{\Gamma}$, we define $\beta \cdot \varsigma$ as the stability condition defined by the stability function $Z \circ \beta^{-1}$ on the the heart $\beta(\cH)$. Equivalently, $\beta \cdot \varsigma \coloneqq (\beta\cdot \cP, Z \circ \beta^{-1})$ has slicing $\beta\cdot \cP$ defined by $(\beta \cdot \cP)(\phi) \coloneqq \beta(\cP(\phi))$ for each $\phi \in \bbR$.
One can check that this defines an action of $\bbB(\Gamma)$ on $\Stab(\cK_{\Gamma})$ by homeomorphisms.
\end{definition}
Let us first understand how this action translates through the forgetful map $\cZ\colon \Stab(\cK_\Gamma) \ra \Hom_{\bbZ}(\Lambda_{\Gamma}, \bbC)$.
Recall that the action of the Artin group $\bbB(\Gamma)$ on $\Lambda_{\Gamma}$ factors through the geometric representation of the Coxeter group $\bbW(\Gamma)$ on $\Lambda_{\Gamma}$; namely, with $\sigma_i$ and $s_i$ denoting the standard generators of $\bbB(\Gamma)$ and $\bbW(\Gamma)$ respectively,  the following diagram is commutative:
\[
\begin{tikzcd}
K_0(\cK) \ar[r, "q=-1"] \ar[d,"\sigma_i"] & \Lambda_\Gamma \ar[d,"s_i"]\\
K_0(\cK_{\Gamma}) \ar[r, "q=-1"] & \Lambda_\Gamma
\end{tikzcd}.
\]
The geometric representation of $\bbW(\Gamma)$ on $\Lambda_{\Gamma}$ induces an action on the $\bbC$-linear dual $\Lambda_{\Gamma}$, where the action on each $f\in \Hom_{\bbZ}(\Lambda_{\Gamma}, \bbC)$ is given by
\[
(s \cdot f)(x) \coloneqq f(s^{-1}(x)).
\]
This is known as the \emph{contragradient representation of} $\bbW(\Gamma)$.
With $\Hom_{\bbZ}(\Lambda_\Gamma, \bbC)$ equipped with the action of $\bbW(\Gamma)$ given by the contragradient representation, we see that the forgetful map $\cZ\colon \Stab(\cK_\Gamma) \ra \Hom_{\bbZ}(\Lambda_\Gamma, \bbC)$ is equivariant with respect to the action of $\bbB(\Gamma)$ and $\bbW(\Gamma)$, i.e.\ the following diagram is commutative:
\[
\begin{tikzcd}
\Stab(\cK_\Gamma) \ar[r, "\cZ"] \ar[d, "\sigma_i"] &  \Hom_{\bbZ}(\Lambda_\Gamma, \bbC) \ar[d,"s_i"]\\
\Stab(\cK_\Gamma) \ar[r, "\cZ"] &  \Hom_{\bbZ}(\Lambda_\Gamma, \bbC)
\end{tikzcd}.
\]
We recall that the space of hyperplane complement associated to $\bbW(\Gamma)$ can be defined using $\Hom_{\bbZ}(\Lambda_{\Gamma}, \bbC)$ as follows.
Let $\alpha_i := [P_i] \in \Lambda_\Gamma$ for each $i \in \Gamma_0$ be the (positive) simple roots of $\Lambda_\Gamma$.
The set of positive roots $\Phi^+$ is then given by
\[
\Phi^+ \coloneqq \bbW \cdot \{ \alpha_i \mid i \in \Gamma_0 \} \cap \sum_{i \in \Gamma_0} \bbZ_{\geq 0}\cdot [P_i].
\]
For each positive root $\alpha \in \Phi^+$ of $\bbW(\Gamma)$, define the complex hyperplane
\[
H_\alpha \coloneqq \{ Z \in \Hom_{\bbZ}(\Lambda_\Gamma,\bbC) \mid Z(\alpha) = 0 \},
\]
so that 
\[
V^c_{\Gamma} \coloneqq  \Hom_{\bbZ}(\Lambda_\Gamma, \bbC) \setminus \bigcup_{\alpha \in \Phi^+} H_{\alpha}
\]
is the \emph{hyperplane complement} associated to $\bbW(\Gamma)$.
The (contragradient) action of $\bbW(\Gamma)$ on the hyperplane complement is free and properly discontinuous and the fundamental group of the quotient space is isomorphic to the Artin group:
\[
\bbB(\Gamma) \cong \pi_1\left( V^c_{\Gamma}/\bbW(\Gamma) \right).
\]
Moreover, a fundamental domain of $V^c_\Gamma$ can be identified as follows.
\begin{proposition}[Complexified Weyl chamber] \label{prop:weylchamber}
A fundamental domain of $V^c_\Gamma$ with respect to the action of $\bbW(\Gamma)$ is given by
\[
C_{\Gamma}\coloneqq \{ Z \in \Hom_{\bbZ}(\Lambda_\Gamma, \bbC) \mid Z(P_i) \in \bbH \cup \bbR_{<0} \}.
\]
Moreover, every point in $\Hom_\bbZ(\Lambda_\Gamma, \bbC)$ lies in the $\bbW(\Gamma)$-orbit of the closure
\[
\overline{C_\Gamma} = \{ Z \in \Hom_{\bbZ}(\Lambda_\Gamma, \bbC) \mid Z(P_i) \in \bbH \cup \bbR \}.
\]
\end{proposition}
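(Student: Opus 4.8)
The plan is to split a central charge $Z\in\Hom_\bbZ(\Lambda_\Gamma,\bbC)$ into its real and imaginary parts and reduce both assertions to the classical fact that the closed dominant cone is a strict fundamental domain for a finite reflection group.

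First I would write $Z=x+\fri\,y$ with $x,y$ in the real dual $V^\ast\coloneqq\Hom_\bbZ(\Lambda_\Gamma,\bbR)$. Since $\bbW(\Gamma)$ acts on $\Hom_\bbZ(\Lambda_\Gamma,\bbC)$ through the (real) contragradient representation, the action is componentwise, $w\cdot Z=(w\cdot x)+\fri\,(w\cdot y)$, and the imaginary part, resp.\ real part, of $(w\cdot Z)(P_i)$ equals $(w\cdot y)(P_i)$, resp.\ $(w\cdot x)(P_i)$. Hence $Z\in\overline{C_\Gamma}$ means exactly that $y$ lies in the closed dominant cone $\overline{D}\coloneqq\{\lambda\in V^\ast:\lambda(P_i)\ge 0\ \forall i\}$, whereas $Z\in C_\Gamma$ means in addition that $x(P_i)<0$ whenever $y(P_i)=0$. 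Because $\Gamma$ is ADE, $\bbW(\Gamma)$ is a finite reflection group on $V^\ast$, so by the standard theory of finite reflection groups (Bourbaki; Humphreys) $\overline{D}$ meets every $\bbW(\Gamma)$-orbit of $V^\ast$ in exactly one point, and the stabiliser of any $y\in\overline{D}$ is the standard parabolic $\bbW_I\coloneqq\langle s_i:i\in I\rangle$ with $I\coloneqq\{i:y(P_i)=0\}$. The ``moreover'' statement then follows at once: for an arbitrary $Z=x+\fri\,y$, choosing $w\in\bbW(\Gamma)$ with $w\cdot y\in\overline{D}$ gives $w\cdot Z\in\overline{C_\Gamma}$.

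For the fundamental-domain claim I would argue in three steps. (i) \emph{$C_\Gamma\subseteq V^c_\Gamma$:} for a positive root $\alpha=\sum_i c_i[P_i]$ with $c_i\ge 0$ not all zero, $Z(\alpha)=\sum_i c_i Z(P_i)$ has strictly positive imaginary part if some $i$ with $c_i>0$ satisfies $y(P_i)>0$, and otherwise lies in $\bbR_{<0}$; either way $Z(\alpha)\ne 0$. (ii) \emph{$\bbW(\Gamma)\cdot C_\Gamma\supseteq V^c_\Gamma$:} using the computation behind the ``moreover'' statement, reduce to $Z=x+\fri\,y$ with $y\in\overline{D}$ and, since $V^c_\Gamma$ is $\bbW(\Gamma)$-stable, still $Z\in V^c_\Gamma$. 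With $I$ as above, any positive root $\alpha$ lying in the span of $\{[P_i]:i\in I\}$ has $y(\alpha)=0$, hence $Z(\alpha)=x(\alpha)\ne 0$; thus $x$ avoids every reflection hyperplane of $\bbW_I$ and lies in a unique open $\bbW_I$-chamber. Picking $w''\in\bbW_I$ that carries this chamber to $\{\lambda:\lambda(P_i)<0\ \forall i\in I\}$, and noting $w''$ fixes $y$, one finds $(w''\cdot Z)(P_i)\in\bbH$ for $i\notin I$ and $(w''\cdot Z)(P_i)=(w''\cdot x)(P_i)\in\bbR_{<0}$ for $i\in I$, so $w''\cdot Z\in C_\Gamma$. (iii) \emph{Each orbit meets $C_\Gamma$ only once:} if $Z$ and $w\cdot Z$ both lie in $C_\Gamma$ with $Z=x+\fri\,y$, then $y,\,w\cdot y\in\overline{D}$ forces $w\cdot y=y$ and $w\in\bbW_I$; for $i\in I$ the imaginary part of $(w\cdot Z)(P_i)$ vanishes, so (using $C_\Gamma\subseteq V^c_\Gamma$, which as in (ii) makes $x$ regular for $\bbW_I$) membership in $C_\Gamma$ forces $x(P_i)<0$ and $(w\cdot x)(P_i)<0$ for all $i\in I$, placing $x$ and $w\cdot x$ in the same open $\bbW_I$-chamber; since $\bbW_I$ acts simply transitively on its chambers, $w=\id$.

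The step I expect to be the main obstacle is (ii), namely reconciling the real part $x$ on the walls $y(P_i)=0$. This is exactly where the hypothesis $Z\in V^c_\Gamma$ is genuinely used: it guarantees that $x$ lands in the \emph{open} anti-dominant chamber of the relevant parabolic rather than merely in its closure, which is precisely why $C_\Gamma$ — and not only $\overline{C_\Gamma}$ — is a \emph{strict} fundamental domain. Everything else is the routine componentwise reduction to the theory of finite reflection groups.
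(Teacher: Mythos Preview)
The paper states this proposition without proof, treating it as a standard fact about complexified Weyl chambers for finite Coxeter groups, so there is no argument in the paper to compare against. Your proof is correct and is essentially the standard one: split $Z$ into real and imaginary parts, invoke that the closed dominant cone $\overline{D}$ is a strict fundamental domain for the finite reflection group $\bbW(\Gamma)$ acting on the real dual, and then adjust on the walls using the parabolic stabiliser $\bbW_I$.

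All three steps are sound. In (i), the convexity argument showing $Z(\alpha)\in\bbH\cup\bbR_{<0}$ for every positive root is exactly right. In (ii), the key observation that $Z\in V^c_\Gamma$ forces $x$ to be $\bbW_I$-regular (so it sits in an \emph{open} $\bbW_I$-chamber and can be moved to the anti-dominant one) is precisely the point, and your remark that this is where the hyperplane-complement hypothesis is genuinely used is well taken. In (iii), note that once you have $x(P_i)<0$ for all $i\in I$ you automatically get $x(\alpha)<0$ for every positive root $\alpha$ of $\bbW_I$ (positive roots being non-negative combinations of the simple ones), so $x$ lies in the open anti-dominant $\bbW_I$-chamber without any separate appeal to regularity; the simple-transitivity argument then finishes it.
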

The following decomposition of $\Hom_\bbZ(\Lambda_\Gamma, \bbC)$ into (real-codimension one) walls and chambers will be helpful.
For each positive root $\alpha \in \Phi^+$, the (type II) wall associated to $\alpha$, denoted by $\frW_\alpha$, is defined as 
\[
\frW_\alpha := \{ Z \in \Hom_{\bbZ}(\Lambda_\Gamma, \bbC) \mid Z(\alpha) \in \R\}.
\]
Note that these walls are \emph{real-codimension one} and are \emph{not} the same as the complex-codimension one hyperplanes $H_\alpha$; each hyperplane $H_\alpha$ is just a subset of $\frW_\alpha$.
Removing these walls from $\Hom_\bbZ(\Lambda_\Gamma, \bbC)$ results in a disjoint union of open chambers 
\begin{equation}\label{eq:walldecomp}
\Hom_\bbZ(\Lambda_\Gamma, \bbC) \setminus \bigcup_{\alpha \in \Phi^+} \frW_\alpha = \coprod_{w \in \bbW(\Gamma)} w \cdot C_\gamma^o, 
\end{equation}
where $C_\Gamma^o$ is the interior of $C_\Gamma$:
\[
C_\Gamma^o = \{ Z \in \Hom_{\bbZ}(\Lambda_\Gamma, \bbC) \mid Z(P_i) \in \bbH\}.
\]
Indeed, the boundary points of $C_\Gamma^o$ all live on some wall $\frW_{\alpha_i}$ and $w \cdot \frW_{\alpha} = \frW_{w(\alpha)}$.
The equality \eqref{eq:walldecomp} follows from \cref{prop:weylchamber}.

Going back to our running example $\Gamma = A_2$, we thus have 6 open chambers -- one for each element of the Coxeter group $\bbW(A_2) \cong S_3$ -- that are also the connected components of $\Hom_\bbZ(\Lambda_{A_2}, \bbC)$ after the removal of the 3 real-codimension one walls:
\begin{align*}
\frW_{\alpha_1} &= \{Z \in \Hom_\bbZ(\Lambda_{A_2}, \bbC) \mid Z(\alpha_1) = Z(P_1) \in \R\}; \\
\frW_{\alpha_2} &= \{Z \in \Hom_\bbZ(\Lambda_{A_2}, \bbC) \mid Z(\alpha_2) = Z(P_2) \in \R\}; \text{ and} \\
\frW_{\alpha_1 + \alpha_2} &= \{Z \in \Hom_\bbZ(\Lambda_{A_2}, \bbC) \mid Z(\alpha_1 + \alpha_2) = Z(P_1)+Z(P_2) \in \R\}.
\end{align*}
\begin{remark}
The preimage of $\cZ$ of the walls described above are real-codimension one submanifolds in $\Stab(\cK_\Gamma)$ known as type II walls.
Crossing type II walls in the stability manifold indicates a change in the (standard) heart $\cP(0,1]$ associated to stability conditions $(\cP, Z)$.
There are also type I walls which describe changes to the stability of objects, which we will not discuss here.
\end{remark}

Now that we understand the local picture $\Hom_{\bbZ}(\Lambda_\Gamma, \bbC)$, let us return to the global picture $\Stab(\cK_\Gamma)$.
In general, $\Stab(\cD)$ need not be connected, so we shall just consider the connected component\footnote{It is known that for $\Gamma$ of ADE type, $\Stab(\cK_{\Gamma})$ is connected \cite{AMY_silting, BDL_2CY}, but we will not need this result.} $\Stab^*(\cK_{\Gamma}) \subseteq \Stab(\cK_{\Gamma})$ that contains the stability conditions coming from stability functions on the standard heart $\cH_{lin}$.
Abusing notation, we shall let $\Stab(\cH_{lin}) \subset \Stab^*(\cK_{\Gamma})$ denote the subset of stability conditions defined by stability functions on the standard heart $\cH_{lin}$.
Any stability function on the standard heart $\cH_{lin}$ is completely (and uniquely) defined by $Z(P_i) \in \bbH \cup \bbR_{<0}$ for each $i \in \Gamma_0$ (see \cref{eg:A2stab} for the example $\Gamma = A_2$).
In particular, we have that
\begin{equation}\label{eq:heartupperplane}
    \cZ\colon \Stab(\cH_{lin}) \xra{\cong} C_{\Gamma}.
\end{equation}
Our first step is to prove the following lifted version of \cref{prop:weylchamber}.
\begin{proposition}\label{prop:stabfundamentaldomain}
For any stability condition $\varsigma \in \Stab^*(\cK_\Gamma)$, there exists a unique $\beta \in \bbB(\Gamma)$ such that $\beta\cdot \varsigma \in \Stab(\cH_{lin})$. In particular, the action of $\bbB(\Gamma)$ on $\Stab^*(\cK_{\Gamma})$ is free and $\Stab(\cH_{lin})$ is a fundamental domain of $\Stab^*(\cK_{\Gamma})$.
\end{proposition}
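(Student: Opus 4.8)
The plan is to lift the proof of \cref{prop:weylchamber} along the $\bbB(\Gamma)$--$\bbW(\Gamma)$-equivariant local homeomorphism $\cZ$ of \cref{thm:stabmfld}. Set $X \coloneqq \bigcup_{\beta \in \bbB(\Gamma)} \beta\cdot\Stab(\cH_{lin}) \subseteq \Stab^*(\cK_\Gamma)$. Since $\Stab^*(\cK_\Gamma)$ is connected and contains $\Stab(\cH_{lin})$, the existence half of the statement follows once $X$ is shown to be both open and closed, and the uniqueness half (and freeness of the action) follows once the translates $\beta\cdot\Stab(\cH_{lin})$ are shown to be pairwise disjoint. So I would split the proof into: openness of $X$, exhaustion $X = \Stab^*(\cK_\Gamma)$, and disjointness.

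For \emph{openness}, by $\bbB(\Gamma)$-equivariance it is enough to find a neighbourhood of an arbitrary point $\varsigma = (\cP,Z) \in \Stab(\cH_{lin})$ inside $X$. Using \cref{thm:stabmfld} together with \cref{lem:deformation}, every central charge $W$ sufficiently close to $Z$ lifts to a unique nearby stability condition $\varsigma_W$. If $W(P_i) \in \bbH \cup \bbR_{<0}$ for all $i$ then $W$ is again a stability function on $\cH_{lin}$ and $\varsigma_W \in \Stab(\cH_{lin})$ by \eqref{eq:heartupperplane}; this already handles all $\varsigma$ with $\cZ(\varsigma) \in C_\Gamma^o$. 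When $\cZ(\varsigma) \in \partial C_\Gamma$, some $W(P_i)$ crosses the real axis, and the explicit tilting computation of \cref{eg:deformP1} and \cref{ex:crossnegative}, combined with the rigidity statement \cref{lem:uniqueslicing<1}, identifies $\varsigma_W$ with a point of $\sigma_i^{\pm 1}\cdot\Stab(\cH_{lin})$. When several $W(P_i)$ cross simultaneously — which happens only over the lower-dimensional faces of $C_\Gamma$ — the relevant generators satisfy the commutation or braid relations of $\Gamma$, and $\varsigma_W$ lands in an iterated translate $\sigma_{i_1}^{\pm1}\cdots\sigma_{i_k}^{\pm 1}\cdot\Stab(\cH_{lin})$; the braid relations are exactly what makes this consistent. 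Thus a whole neighbourhood of $\varsigma$ lies in $X$.

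For \emph{exhaustion}, given $\varsigma = (\cP,Z) \in \Stab^*(\cK_\Gamma)$ I would first argue $Z \in V^c_\Gamma$: for each positive root $\alpha$ there is a $\cP$-semistable object of class $\pm\alpha$ in $\Lambda_\Gamma$ (this is where the ADE hypothesis enters, via the correspondence between positive roots and spherical objects of $\cK_\Gamma$), so $Z(\alpha)\neq 0$. By \cref{prop:weylchamber} pick $\beta$ with $[\beta]\cdot Z \in C_\Gamma$ and replace $\varsigma$ by $\beta\cdot\varsigma$, so that $Z \in C_\Gamma$; it then remains to show the heart of $\varsigma$ is $\cH_{lin}$. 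For $Z \in C_\Gamma^o$ this is forced because $\cZ$ is a local homeomorphism and, by the openness analysis, the heart cannot change along a path whose central charge stays in the open chamber $C_\Gamma^o$, which is connected; the boundary case $Z \in C_\Gamma\setminus C_\Gamma^o$ follows by a limiting argument. Hence $\varsigma \in \Stab(\cH_{lin})$ and $X = \Stab^*(\cK_\Gamma)$. For \emph{disjointness}: if $\beta$ sends one point of $\Stab(\cH_{lin})$ to another, then $[\beta]$ fixes a point of $C_\Gamma \subseteq V^c_\Gamma$, so $[\beta]=1$ by freeness of the $\bbW(\Gamma)$-action, and the two $\cZ$-images coincide, hence by \eqref{eq:heartupperplane} the two stability conditions coincide; thus $\beta$ is a pure braid fixing some $\tau \in \Stab(\cH_{lin})$. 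Taking $\tau$ with generic central charge, $\beta$ preserves the heart and slicing and fixes each simple $P_i$ (the unique $\tau$-stable object of its class and phase), whence $\beta = 1$ by faithfulness of the $\bbB(\Gamma)$-action on $\cK_\Gamma$. This yields freeness and shows $\Stab(\cH_{lin})$ is a strict fundamental domain.

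The hard part will be the exhaustion step — proving that the heart of \emph{every} stability condition in the component is a braid translate of $\cH_{lin}$, equivalently that the wall-and-chamber structure on $\Stab^*(\cK_\Gamma)$ is precisely the pullback of the Coxeter arrangement on $\Hom_\bbZ(\Lambda_\Gamma,\bbC)$. This is where one genuinely needs the global, finite-type structure of $\cK_\Gamma$ (to keep central charges inside $V^c_\Gamma$, and to bound the tilting that connects an arbitrary heart back to $\cH_{lin}$), rather than only the local deformation theory of \cref{lem:deformation}.
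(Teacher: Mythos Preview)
Your exhaustion step contains a genuine gap. After translating so that $Z \in C_\Gamma^o$, you claim the heart must be $\cH_{lin}$ because ``the heart cannot change along a path whose central charge stays in the open chamber $C_\Gamma^o$, which is connected''. But connectedness of $C_\Gamma^o$ downstairs says nothing about connectedness of its preimage: $\cZ^{-1}(C_\Gamma^o)$ has infinitely many sheets, one for each pure braid, and your argument gives no way to decide which sheet $\varsigma$ lies on. Concretely, $\sigma_i^2\cdot\Stab(\cH_{lin})$ also maps homeomorphically onto $C_\Gamma$ under $\cZ$, and nothing in your argument distinguishes it from $\Stab(\cH_{lin})$. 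Relatedly, your preliminary claim that $Z \in V^c_\Gamma$ (via ``for each positive root $\alpha$ there is a $\cP$-semistable object of class $\pm\alpha$'') is not available at this point; in the paper this is a \emph{consequence} of \cref{prop:stabfundamentaldomain}, not an input to it.

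The paper sidesteps both issues by arguing entirely upstairs. Take a path $p$ in $\Stab^*(\cK_\Gamma)$ from a point of $\Stab(\cH_{lin})$ to $\xi$ (path-connectedness of the manifold), perturb $p$ so that $\cZ(p)$ meets only finitely many walls and only in real codimension one, and then invoke \cref{lem:neighbourhearts}: each wall-crossing of $\Stab(\cH_{lin})$ is absorbed by a single $\sigma_i^{\pm 1}$, and by induction on the number of crossings one obtains $\beta$ with $\beta\cdot\xi \in \Stab(\cH_{lin})$. This never needs the image of $\cZ$ computed in advance, nor any analysis at higher-codimension faces of $C_\Gamma$ (the perturbation avoids them), whereas your openness step tries to handle those faces directly and only sketches why the iterated tilts are consistent. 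Your disjointness argument is essentially the paper's: once $\beta$ preserves $\cH_{lin}$, the functor is isomorphic to the identity and faithfulness of the $\bbB(\Gamma)$-action finishes it.
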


We saw in \cref{eg:deformP1} and \cref{ex:crossnegative} that the stability conditions coming from the hearts $\sigma_1^{\pm 1}(\cH_{lin})$ and $\cH_{lin}$ are ``close by''; the following is a general version of that result:
\begin{lemma}[\protect{\cite[Lemma 3.5]{bridgeland2009kleinian}}]\label{lem:neighbourhearts}
Suppose $(\cP,Z) \in \Stab(\cK_\Gamma)$ is defined by a stability function on the standard heart $\cH_{lin}$ satisfying $Z(P_i) \in \bbR_{<0}$ and $Z(P_j) \in \bbH$ for all $j \neq i$.
Then there exists an open neighbourhood $U \subset \Stab(\cK_\Gamma)$ of $(\cP,Z)$ such that the stability conditions in $U$ are either given by stability functions on $\cH_{lin}$ or $\sigma_i(\cH_{lin})$.
Conversely, suppose $(\cP,Z)$ is defined by a stability function on $\sigma_i^{-1}\cH_{lin}$ satisfying $Z(P_i[1]) \in \bbR_{<0}$ (so $Z(P_i) \in \bbR_{>0}$) and $Z(P_j) \in \bbH$ for all $j \neq i$.
Then there exists an open neighbourhood $U \subset \Stab(\cK_\Gamma)$ of $(\cP,Z)$ such that the stability conditions in $U$ are either given by stability functions on $\sigma_i^{-1}(\cH_{lin})$ or $\cH_{lin}$.
\end{lemma}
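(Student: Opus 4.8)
The plan is to reduce to the first assertion, then analyse a small ball of central charges around $Z$, split along the real hyperplane $\{\operatorname{Im} W(P_i)=0\}$.

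\smallskip
\noindent\emph{Reduction.} I would deduce the second assertion from the first, applied with $\cH_{lin}$ replaced by the (finite-length) heart $\sigma_i^{-1}(\cH_{lin})$, the autoequivalence $\sigma_i$, and the distinguished simple object $\sigma_i^{-1}(P_i)$. Indeed, since $\sigma_i$ acts on $\Lambda_\Gamma$ as the reflection $s_i$ (\cref{prop:coxeterongrothendieck}), one has $[\sigma_i^{-1}(P_i)]=-[P_i]$ and $[\sigma_i^{-1}(P_j)]\in\{[P_j],[P_j]+[P_i]\}$ for $j\neq i$, so $Z(\sigma_i^{-1}(P_i))=Z(P_i[1])\in\bbR_{<0}$ while $Z(\sigma_i^{-1}(P_j))\in\bbH$; also $\sigma_i(\sigma_i^{-1}(\cH_{lin}))=\cH_{lin}$. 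The argument below uses $\cH_{lin}$ only through such features, so from now on I treat only the first assertion.

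\smallskip
\noindent\emph{Setting up the neighbourhood.} Combining \cref{thm:stabmfld}, \cref{lem:deformation} and \cref{lem:uniqueslicing<1} I would fix $\varepsilon\in(0,1/8)$, an open neighbourhood $N$ of $(\cP,Z)$, and an open ball $B\ni Z$ in $\Hom_{\bbZ}(\Lambda_\Gamma,\bbC)$ such that $\cZ$ restricts to a homeomorphism $N\to B$, every stability condition in $N$ has slicing within distance $\varepsilon$ of $\cP$, and (shrinking $B$) $W(P_j)\in\bbH$ for all $j\neq i$ and $\operatorname{Re}W(P_i)<0$ for every $W\in B$. Write $B^{+}=\{W\in B:\operatorname{Im}W(P_i)\geq 0\}$ and $B^{-}=\{W\in B:\operatorname{Im}W(P_i)<0\}$.

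\smallskip
\noindent\emph{Two heart families.} For $W\in B^{+}$: every simple object of $\cH_{lin}$ has class $[P_k]$ for some $k\in\Gamma_0$, so $W(P_k)\in\bbH\cup\bbR_{<0}$; as $\bbH\cup\bbR_{<0}$ is closed under sums and positive scalings and $\cH_{lin}$ is finite length, $W$ is a stability function on $\cH_{lin}$ and (by \cref{rem:HNautomatic}) defines a stability condition $\tau_W=(\cQ_W,W)$ with heart $\cH_{lin}$. For $W\in B^{-}$: the heart $\sigma_i(\cH_{lin})$ (a heart by \cref{ex:autoheartisheart}) has simples of class $s_i[P_k]$, and by \cref{prop:coxeterongrothendieck}
\[
W(\sigma_i(P_i))=-W(P_i)\in\bbH,\qquad W(\sigma_i(P_j))\in\{\,W(P_j),\ W(P_j)+W(P_i)\,\}\subset\bbH\quad(j\neq i),
\]
so the same reasoning gives a stability condition $\tau'_W=(\cQ'_W,W)$ with heart $\sigma_i(\cH_{lin})$. (Note $\sigma_i(\cH_{lin})$ is \emph{not} admissible on the wall $\operatorname{Im}W(P_i)=0$, where $-W(P_i)\in\bbR_{>0}$.)

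\smallskip
\noindent\emph{Exhaustion, and the main obstacle.} Given any $\varsigma=(\cR,W)\in N$, I would identify $\cR$ with $\cH_{lin}$ or $\sigma_i(\cH_{lin})$. If $W\in B^{+}$: both $\cP$ and $\cQ_W$ arise from stability functions on the finite-length heart $\cH_{lin}$ with nearby central charges $Z,W$, and this assignment is continuous (the content of \eqref{eq:heartupperplane} being a homeomorphism), so after shrinking $B$ one gets $d(\cP,\cQ_W)<1-\varepsilon$, hence $d(\cR,\cQ_W)<1$; since $\varsigma$ and $\tau_W$ share the central charge $W$, \cref{lem:uniqueslicing<1} forces $\varsigma=\tau_W$ and $\cR=\cH_{lin}$. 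If $W\in B^{-}$: since $Z(P_i)\in\bbR_{<0}$, points $W\in B^{-}$ near $Z$ are near the wall point $W_0$ obtained by setting $\operatorname{Im}W(P_i)=0$, and using $\sigma_i(P_i)\cong P_i\langle 2\rangle[1]$ one checks $\tau'_W\to\tau_{W_0}$ as $W\to W_0$ within $B^{-}$ (for instance the $\tau'_W$-phase of $\sigma_i(P_i)$, small and positive, tends to $\phi_{\tau_{W_0}}(\sigma_i(P_i))=\phi_{\tau_{W_0}}(P_i)-1=0$); hence $d(\cP,\cQ'_W)\leq d(\cP,\cQ_{W_0})+d(\cQ_{W_0},\cQ'_W)<1-\varepsilon$ for $B$ small, and \cref{lem:uniqueslicing<1} again forces $\varsigma=\tau'_W$ and $\cR=\sigma_i(\cH_{lin})$. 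Taking $U=N$ then proves the lemma. The main obstacle is exactly this exhaustion step: it rests on two continuity inputs — continuity of the slicing in the central charge within a fixed finite-length heart, and the matching of the $\cH_{lin}$- and $\sigma_i(\cH_{lin})$-families along the wall $\operatorname{Im}W(P_i)=0$ — both activated through the rigidity of \cref{lem:uniqueslicing<1}; the root-lattice computations of the previous paragraph are routine given \cref{prop:coxeterongrothendieck}.
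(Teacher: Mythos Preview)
The paper does not supply its own proof of this lemma—it is stated with a citation to \cite[Lemma 3.5]{bridgeland2009kleinian}, and the surrounding material only illustrates the phenomenon for $\Gamma=A_2$ in \cref{eg:deformP1} and \cref{ex:crossnegative}. Your outline is precisely the standard argument (the one in Bridgeland's paper and implicit in those examples): split a small ball of central charges along the wall $\{\operatorname{Im}W(P_i)=0\}$, build a stability condition with heart $\cH_{lin}$ on one side and $\sigma_i(\cH_{lin})$ on the other using the root-lattice computations from \cref{prop:coxeterongrothendieck}, and then use \cref{lem:uniqueslicing<1} to force every nearby stability condition to coincide with one of these. The reduction of the second assertion to the first via conjugation by $\sigma_i$ is also correct (one point you leave implicit: the spherical twist about the distinguished simple $\sigma_i^{-1}(P_i)$ of $\sigma_i^{-1}(\cH_{lin})$ is again $\sigma_i$, so the neighbouring heart really is $\cH_{lin}$).

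The two technical inputs you flag as the ``main obstacle''—continuity of $W\mapsto\tau_W$ for a fixed finite-length heart, and the matching of the two families along the wall—are exactly the non-formal content of the lemma, and you have located them correctly. One small remark on the $B^{+}$ side: the naive observation that any two stability functions on the \emph{same} heart $\cH_{lin}$ have slicings at distance $\leq 1$ (since all phases lie in $(0,1]$) is not sharp enough after the triangle inequality with $d(\cR,\cP)<\varepsilon$; you genuinely need the continuity, which (as you say) is the content of \eqref{eq:heartupperplane} being a homeomorphism, or equivalently follows by applying \cref{lem:deformation} and \cref{lem:uniqueslicing<1} \emph{at} $\tau_W$ rather than at $(\cP,Z)$. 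Your wall-matching check via the phase of $\sigma_i(P_i)\cong P_i\langle 2\rangle[1]$ is correct and is the essential case; the other simples $\sigma_i(P_j)$ for $j\neq i$ are easier, since after shrinking $B$ one has $W(\sigma_i(P_j))\in\bbH$ throughout and their phases pass continuously through the wall.
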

Note that the lemma above also implies that the action of $\bbB(\Gamma)$ on $\Stab(\cK_{\Gamma})$ preserves the connected component $\Stab^*(\cK_\Gamma)$.
\begin{proof}[Proof of \cref{prop:stabfundamentaldomain}]
Let $\xi \in \Stab^*(\cK_\Gamma)$.
Pick a path $p$ connecting some $\varsigma \in \Stab(\cH_{lin})$ to $\xi$ (connected implies path connected in a manifold).
The projection $\cZ(p)$ of the path $p$ is then a path in $\Hom_\bbZ(\Lambda_\Gamma,\bbC)$.
Using the local homeomorphism property of $\cZ$, we may deform $p$ slightly so that $\cZ(p)$ only passes through finitely many walls in $\Hom_\bbZ(\Lambda_\Gamma,\bbC)$ (no infinite ``wiggling'' around a wall'') and $\cZ(p)$ only passes through real-codimension one walls (in the $A_2$ case, this means the path $p$ only passes through walls with at most one of $Z(P_1), Z(P_2)$ and $Z(P_1)+Z(P_2)$ having zero imaginary part).
As such, every time $p$ passes through a wall of $\Stab(\cH_{lin}) \cong C_\Gamma$, \cref{lem:neighbourhearts} says that either $\sigma_i$ or $\sigma_i^{-1}$ can be applied so that we are back at stability conditions in $\Stab(\cH_{lin})$.
Given the condition on path $p$ this can only happen finitely many times, and so $\beta\cdot \xi \in \Stab(\cH_{lin})$ for some $\beta \in \bbB(\Gamma)$ as required.

To prove uniqueness, it is now sufficient to show that for all $\varsigma \in \Stab(\cH_{lin})$, $\beta \cdot \varsigma = \varsigma$ implies $\beta = \id$.
Note that $\beta \cdot \varsigma = \varsigma$ implies in particular that $\beta$ preserves $\cH_{lin}$. 
One can explicitly show that any composition of the endofunctors $\sigma_s$ that preserves the heart $\cH_{lin}$ is isomorphic to the identity functor (this can be done via realising the endofunctors $\sigma_i$ as actual complexes of graded bimodules, or see e.g.\ \cite[Remark 3.7]{bridgeland2009kleinian}).
Moreover, for $\Gamma$ of ADE type, the assignment of elements in $\bbB(\Gamma)$ to endofunctors of $\cK_\Gamma$ given in \cref{thm:braidaction} is known to be faithful (see references after \cref{thm:braidaction}), hence the result follows.
\end{proof}

We can now deduce that the image of $\cZ$ is the hyperplane complement $V^c_\Gamma$.
\begin{proposition}
The image of $\cZ\colon\Stab^*(\cK_\Gamma) \ra \Hom_\bbZ(\Lambda_\Gamma, \bbC)$ is equal to $V^c_\Gamma$.
\end{proposition}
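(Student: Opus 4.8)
The plan is to deduce the statement from the two structural results already established: the identification $\cZ\colon \Stab(\cH_{lin}) \xra{\cong} C_\Gamma$ from \eqref{eq:heartupperplane}, and \cref{prop:stabfundamentaldomain}, which exhibits $\Stab(\cH_{lin})$ as a fundamental domain for the $\bbB(\Gamma)$-action on $\Stab^*(\cK_\Gamma)$. Combined with the $\bbB(\Gamma)$-$\bbW(\Gamma)$-equivariance of $\cZ$ (recall that the $\bbB(\Gamma)$-action on the target factors through the contragradient $\bbW(\Gamma)$-action, and $\bbB(\Gamma)\twoheadrightarrow\bbW(\Gamma)$), this should turn the claim into a transport of \cref{prop:weylchamber} across $\cZ$.

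First I would observe that $\img\cZ$ is stable under the contragradient $\bbW(\Gamma)$-action: given $f=\cZ(\xi)$ and $w\in\bbW(\Gamma)$, pick any lift $\beta\in\bbB(\Gamma)$ of $w$; then $w\cdot f = \cZ(\beta\cdot\xi)\in\img\cZ$ by equivariance. For the inclusion $V^c_\Gamma\subseteq\img\cZ$, note that $C_\Gamma = \cZ(\Stab(\cH_{lin}))\subseteq\img\cZ$ by \eqref{eq:heartupperplane}, so $\img\cZ$ contains the orbit $\bbW(\Gamma)\cdot C_\Gamma$, which is exactly $V^c_\Gamma$ by \cref{prop:weylchamber}.

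For the reverse inclusion $\img\cZ\subseteq V^c_\Gamma$, I would first check the easy containment $C_\Gamma\subseteq V^c_\Gamma$: if $Z(P_i)\in\bbH\cup\bbR_{<0}$ for all $i$ and $\alpha=\sum_i n_i[P_i]\in\Phi^+$ (all $n_i\geq 0$, not all zero), then $Z(\alpha)$ is a nontrivial non-negative combination of elements of $\bbH\cup\bbR_{<0}$, hence again lies in $\bbH\cup\bbR_{<0}$ and in particular is nonzero, so $Z\notin H_\alpha$. Now take an arbitrary $\xi\in\Stab^*(\cK_\Gamma)$; by \cref{prop:stabfundamentaldomain} there is $\beta\in\bbB(\Gamma)$ with $\beta\cdot\xi\in\Stab(\cH_{lin})$, so $\cZ(\xi)$ lies in the $\bbW(\Gamma)$-orbit of a point of $C_\Gamma\subseteq V^c_\Gamma$; since $V^c_\Gamma$ is $\bbW(\Gamma)$-invariant (the contragradient action satisfies $wH_\alpha = H_{w\alpha}$ and $\bbW(\Gamma)$ permutes $\Phi^+\cup(-\Phi^+)$), we conclude $\cZ(\xi)\in V^c_\Gamma$.

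Once \cref{prop:stabfundamentaldomain} and \cref{prop:weylchamber} are granted the argument is essentially bookkeeping. The one place deserving genuine care is the reverse inclusion: a priori a stability condition lying deep inside some far chamber could conceivably have central charge vanishing on a positive root, and it is precisely the fundamental-domain statement \cref{prop:stabfundamentaldomain} -- not merely the formal equivariance of $\cZ$ -- that rules this out. That is the step I would flag as the crux of the proof.
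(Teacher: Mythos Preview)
Your proof is correct and follows essentially the same approach as the paper: both argue by combining the identification $\cZ\colon\Stab(\cH_{lin})\xra{\cong}C_\Gamma$, the fundamental-domain statements \cref{prop:stabfundamentaldomain} and \cref{prop:weylchamber}, and the $\bbB(\Gamma)$--$\bbW(\Gamma)$-equivariance of $\cZ$. Your write-up is more explicit (e.g.\ you spell out $C_\Gamma\subseteq V^c_\Gamma$, which the paper takes as implicit in $C_\Gamma$ being a fundamental domain of $V^c_\Gamma$), but the logical skeleton is the same.
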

\begin{proof}
Recall from \cref{prop:stabfundamentaldomain} and \cref{prop:weylchamber} that $\Stab(\cH_{lin})$ and $C_{\Gamma}$ are fundamental domains of $\Stab^*(\cK_\Gamma)$ and $V^c_\Gamma$ respectively.
Moreover, $\cZ$ identifies the two fundamental domains.
The result now follows from the fact that $\cZ$ is equivariant with respect to $\bbB(\Gamma)$ and $\bbW(\Gamma)$.
\end{proof}

Moreover, this allows us to show that the action of $\bbB(\Gamma)$ on $\Stab^*(\cK_\Gamma)$ is properly discontinuous:
\begin{proposition}
The action of $\bbB(\Gamma)$ on $\Stab^*(\cK_{\Gamma})$ is properly discontinuous.
\end{proposition}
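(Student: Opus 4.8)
The plan is to leverage the equivariance of the forgetful map $\cZ$ together with two facts already at hand: the contragredient action of $\bbW(\Gamma)$ on $V^c_\Gamma$ is (free and) properly discontinuous, and the $\bbB(\Gamma)$-action on $\Stab^*(\cK_\Gamma)$ is free (\cref{prop:stabfundamentaldomain}). The target will be the strong ``covering space action'' form of proper discontinuity: every $\xi \in \Stab^*(\cK_\Gamma)$ has an open neighbourhood $U$ with $\beta \cdot U \cap U \neq \emptyset \implies \beta = \id$.

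First I would fix $\xi \in \Stab^*(\cK_\Gamma)$ and set $Z \coloneqq \cZ(\xi) \in V^c_\Gamma$. Since the $\bbW(\Gamma)$-action on $V^c_\Gamma$ is free and properly discontinuous (and $V^c_\Gamma$ is Hausdorff), there is an open neighbourhood $V \ni Z$ inside $V^c_\Gamma$ with $w \cdot V \cap V = \emptyset$ for all $w \neq e$ in $\bbW(\Gamma)$. Because $\cZ$ is a local homeomorphism (\cref{thm:stabmfld}) and $\cZ(\Stab^*(\cK_\Gamma)) = V^c_\Gamma$, I can then choose an open neighbourhood $U \ni \xi$ on which $\cZ$ restricts to a homeomorphism onto an open subset of $V^c_\Gamma$ contained in $V$; in particular $\cZ|_U$ is injective.

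Now suppose $\beta \in \bbB(\Gamma)$ satisfies $\beta \cdot U \cap U \neq \emptyset$, and pick $\eta \in U$ with $\beta \cdot \eta \in U$. Writing $\overline\beta \in \bbW(\Gamma)$ for the image of $\beta$ under $\bbB(\Gamma) \twoheadrightarrow \bbW(\Gamma)$, equivariance of $\cZ$ gives $\cZ(\beta \cdot \eta) = \overline\beta \cdot \cZ(\eta)$; since both $\cZ(\eta)$ and $\cZ(\beta \cdot \eta)$ lie in $V$, we get $\overline\beta \cdot V \cap V \neq \emptyset$ and hence $\overline\beta = e$. Consequently $\beta$ acts trivially on $\Lambda_\Gamma$, hence trivially on $\Hom_\bbZ(\Lambda_\Gamma, \bbC)$, so $\cZ(\beta \cdot \eta) = \cZ(\eta)$; as $\cZ|_U$ is injective and $\eta, \beta \cdot \eta \in U$, this forces $\beta \cdot \eta = \eta$, and then freeness of the action (\cref{prop:stabfundamentaldomain}) yields $\beta = \id$. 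This is exactly the desired property, which in particular implies proper discontinuity.

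The only genuinely non-formal point is the last step: the equivariance argument by itself only shows $\beta \in \ker(\bbB(\Gamma) \to \bbW(\Gamma))$, which is an infinite group, so one must actually feed in that $\cZ$ is injective on small opens (to pin the point down within its $\cZ$-fibre) and that the $\bbB(\Gamma)$-action is free (to pin $\beta$ down). Everything else — extracting the good neighbourhood $V$ from proper discontinuity plus freeness of the Weyl action, and shrinking $U$ via the local homeomorphism — is routine point-set topology and I would not belabour it.
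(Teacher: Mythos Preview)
Your argument is correct and follows the same overall strategy as the paper: use proper discontinuity of the $\bbW(\Gamma)$-action downstairs together with equivariance of $\cZ$ to force $\overline\beta = e$, then pin down $\beta \cdot \eta = \eta$ and invoke freeness from \cref{prop:stabfundamentaldomain}. The only difference is in that penultimate step: you use injectivity of $\cZ|_U$ directly (coming from the local homeomorphism in \cref{thm:stabmfld}), whereas the paper shrinks $U$ so that all slicings are within distance $1/2$ of $\cP$, applies the triangle inequality to get $d(\cP',\beta\cdot\cP') < 1$, and then invokes \cref{lem:uniqueslicing<1} explicitly. Your version is a little slicker since \cref{lem:uniqueslicing<1} is already baked into the local homeomorphism statement, but the two are really the same argument.
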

\begin{proof}
On the hyperplane complement $V^c_\Gamma$, the action of $\bbW(\Gamma)$ is properly discontinuous.
As such, for each $(\cP,Z) \in \Stab^*(\cK_{\Gamma})$ we can pick a small enough neighbourhood $U_{(\cP,Z)}$ such that $\cZ$ restricts to a homeomorphism onto an open neighbourhood $U_Z$ of $Z = \cZ(\cP,Z)$ satisfying 
\begin{equation}\label{eq:properlydiscont}
    w\cdot U_Z \cap U_Z \neq \emptyset \implies w = \id \in \bbW(\Gamma).
\end{equation}
Suppose $\beta \cdot U_{(\cP,Z)} \cap U_{(\cP,Z)} \neq \emptyset$.
Applying $\cZ$ and using its equivariant property, \eqref{eq:properlydiscont} implies that the image of $\beta$ in $\bbW(\Gamma)$ must be the identity element.
As such, the action of $\beta$ on any stability condition will leave its stability function invariant.
Now let $(\cP',Z') \in U_{(\cP,Z)}$ such that $\beta\cdot (\cP',Z') = (\beta\cdot \cP', Z') \in U_{(\cP,Z)}$.
Since $U_{(\cP,Z)}$ could be made arbitrarily small, we could assume without lost of generality that all stability conditions in it has slicing distance from $\cP$ less than $1/2$, so that we have 
\[
d(\cP', \cP)<1/2, \text{ and} \quad (\beta\cdot \cP', \cP)<1/2.
\]
Thus, we obtain 
\[
d(\cP', \beta\cdot \cP') \leq d(\cP', \cP) + d(\cP, \beta\cdot \cP') < 1, 
\]
where \cref{lem:uniqueslicing<1} implies that $\beta \cdot \cP' = \cP'$.
In particular, we have found $\beta \in \bbB(\Gamma)$ such that $\beta\cdot (\cP', Z') = (\cP', Z')$, where the uniqueness from \cref{prop:stabfundamentaldomain} implies that $\beta = \id$ as required.
\end{proof}

\begin{theorem}\label{thm:covering}
The composition 
\[
\Stab^*(\cK_\Gamma) \xra{\cZ} V^c_\Gamma \twoheadrightarrow V^c_\Gamma / \bbW(\Gamma)
\]
is a covering map, with $\bbB(\Gamma)$ acting via deck transformations.
\end{theorem}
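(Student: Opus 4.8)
The plan is to factor the map in the statement as the composition $\Stab^*(\cK_\Gamma) \xra{q} \Stab^*(\cK_\Gamma)/\bbB(\Gamma) \xra{\bar{\cZ}} V^c_\Gamma/\bbW(\Gamma)$, where $q$ is the orbit map of the $\bbB(\Gamma)$-action and $\bar{\cZ}$ is the map induced by $\cZ$, and then to show that $q$ is a covering map and $\bar{\cZ}$ a homeomorphism. For the first factor: by \cref{thm:stabmfld} the space $\Stab^*(\cK_\Gamma)$ is a manifold, in particular Hausdorff; by \cref{prop:stabfundamentaldomain} the action of $\bbB(\Gamma)$ on it is free, and by the proposition just above it is properly discontinuous. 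Hence, by the standard fact about free properly discontinuous actions on Hausdorff manifolds, $q$ is a (normal) covering map whose group of deck transformations is exactly $\bbB(\Gamma)$.

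For the second factor, the $\bbB(\Gamma)$--$\bbW(\Gamma)$--equivariance of $\cZ$ (through $\bbB(\Gamma)\twoheadrightarrow\bbW(\Gamma)$) together with $\img(\cZ)=V^c_\Gamma$ shows that $\cZ$ descends to a continuous map $\bar{\cZ}\colon \Stab^*(\cK_\Gamma)/\bbB(\Gamma) \to V^c_\Gamma/\bbW(\Gamma)$ fitting into the commutative square with $q$ and the orbit map $r\colon V^c_\Gamma \to V^c_\Gamma/\bbW(\Gamma)$. I would then verify that $\bar{\cZ}$ is (i) \emph{surjective}, as $\cZ$ already surjects onto $V^c_\Gamma$; (ii) \emph{open}, since $q$ is a quotient map and $r\circ\cZ = \bar{\cZ}\circ q$ is open ($\cZ$ being a local homeomorphism by \cref{thm:stabmfld} and $r$ an orbit map); and (iii) \emph{injective}. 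For (iii), suppose $\bar{\cZ}[\varsigma_1]=\bar{\cZ}[\varsigma_2]$, i.e.\ $\cZ(\varsigma_2)=w\cdot\cZ(\varsigma_1)$ for some $w\in\bbW(\Gamma)$. Replacing each $\varsigma_i$ by a suitable $\bbB(\Gamma)$-translate --- which changes neither its class in $\Stab^*(\cK_\Gamma)/\bbB(\Gamma)$ nor the hypothesis --- we may assume, by \cref{prop:stabfundamentaldomain}, that $\varsigma_1,\varsigma_2\in\Stab(\cH_{lin})$; then \eqref{eq:heartupperplane} places $\cZ(\varsigma_1),\cZ(\varsigma_2)$ in $C_\Gamma$, and since $C_\Gamma$ is a set of $\bbW(\Gamma)$-orbit representatives (\cref{prop:weylchamber}) we get $\cZ(\varsigma_1)=\cZ(\varsigma_2)$; finally the bijection $\cZ\colon\Stab(\cH_{lin})\xra{\cong}C_\Gamma$ forces $\varsigma_1=\varsigma_2$, hence $[\varsigma_1]=[\varsigma_2]$. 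An open continuous bijection is a homeomorphism, so $\bar{\cZ}$ is one.

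Assembling: the map of the theorem is $\bar{\cZ}\circ q$, a covering map followed by a homeomorphism, hence a covering map; transporting the deck-transformation action along $\bar{\cZ}$ identifies its deck group with $\bbB(\Gamma)$ acting as in \cref{defn:actionstab}. The step I expect to be the main obstacle is injectivity (iii): the argument hinges on $C_\Gamma$ being a \emph{strict} fundamental domain --- two central charges in $C_\Gamma$ in a common $\bbW(\Gamma)$-orbit must coincide --- which in turn rests on the chamber decomposition \eqref{eq:walldecomp} and the uniqueness clause of \cref{prop:stabfundamentaldomain}; everything else is formal once the preceding propositions are in hand.
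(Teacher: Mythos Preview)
Your proposal is correct and follows essentially the same route as the paper: factor through the orbit map $q\colon \Stab^*(\cK_\Gamma)\twoheadrightarrow\Stab^*(\cK_\Gamma)/\bbB(\Gamma)$, use the free and properly discontinuous action to make $q$ a covering, and then identify the two quotients via the equivariance of $\cZ$ together with the fundamental-domain matching $\Stab(\cH_{lin})\xra{\cong}C_\Gamma$. The paper compresses your steps (i)--(iii) into the single sentence ``\eqref{eq:heartupperplane} shows that the quotient spaces $\Stab^*(\cK_\Gamma)/\bbB(\Gamma)$ and $V^c_\Gamma / \bbW(\Gamma)$ are identified via $\cZ$''; your version just unpacks this, and the strictness of the fundamental domain you flag for (iii) is precisely what \cref{prop:weylchamber} supplies.
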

\begin{proof}
Since $\bbB(\Gamma)$ acts on $\Stab^*(\cK_\Gamma)$ freely and properly discontinuously, we know that the quotient
\[
\Stab^*(\cK_\Gamma) \twoheadrightarrow \Stab^*(\cK_\Gamma)/\bbB(\Gamma)
\]
is a covering map.
Recall that $\Stab(\cH_{lin})$ and $C_\Gamma$ are fundamental domains of $\Stab^*(\cK_\Gamma)$ and $V^c_\Gamma$ respectively.
Since $\cZ$ is equivariant with respect to the actions of $\bbB(\Gamma)$ and $\bbW(\Gamma)$, \eqref{eq:heartupperplane} shows that the quotient spaces $\Stab^*(\cK_\Gamma)/\bbB(\Gamma)$ and $V^c_\Gamma / \bbW(\Gamma)$ are identified via $\cZ$.
In particular, the composition in the theorem is indeed a covering map.
The equivariant property of $\cZ$ also implies that $\bbB(\Gamma)$ acts by deck transformations.
\end{proof}

\begin{corollary}
The $K(\pi,1)$ conjecture holds of $\Gamma$ of ADE type.
\end{corollary}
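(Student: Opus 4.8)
The plan is to deduce the corollary formally from \cref{thm:covering}, the only missing ingredient being the contractibility of the stability space. Recall that the $K(\pi,1)$ conjecture for $\Gamma$ asserts that the quotient hyperplane complement $V^c_\Gamma/\bbW(\Gamma)$ is aspherical, i.e.\ a $K(\bbB(\Gamma),1)$; since $\bbW(\Gamma)$ is finite in the ADE case, the covering $V^c_\Gamma \to V^c_\Gamma/\bbW(\Gamma)$ is finite-sheeted, so this is equivalent to asphericity of $V^c_\Gamma$ itself. Thus it suffices to exhibit a \emph{contractible} covering space of $V^c_\Gamma/\bbW(\Gamma)$.

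First I would invoke \cref{thm:covering}: the composite $\Stab^*(\cK_\Gamma) \xra{\cZ} V^c_\Gamma \twoheadrightarrow V^c_\Gamma/\bbW(\Gamma)$ is a covering map, with $\bbB(\Gamma)$ acting by deck transformations. A contractible covering space is automatically the universal cover, and a connected space whose universal cover is contractible has all higher homotopy groups vanishing (a covering map induces isomorphisms on $\pi_n$ for $n \geq 2$), hence is a $K(\pi_1,1)$. Combined with the identification $\pi_1(V^c_\Gamma/\bbW(\Gamma)) \cong \bbB(\Gamma)$ recalled just before \cref{prop:weylchamber}, this exhibits $V^c_\Gamma/\bbW(\Gamma)$ — and therefore, by the covering observation above, $V^c_\Gamma$ — as a $K(\bbB(\Gamma),1)$, which is exactly the conjecture.

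So the whole argument reduces to the single assertion that $\Stab^*(\cK_\Gamma)$ is contractible for $\Gamma$ of ADE type, and this is where the hard work and the genuine obstacle lie. It is precisely the ``general results about the triangulated category $\cK_\Gamma$'' alluded to in the introduction: one analyses the connected component $\Stab^*(\cK_\Gamma)$ through its decomposition into closed regions indexed by the hearts obtained from $\cH_{lin}$ under the $\bbB(\Gamma)$-action (as already foreshadowed by \cref{prop:stabfundamentaldomain} and \cref{lem:neighbourhearts}), together with the finiteness features of $\cK_\Gamma$ in ADE type, and shows the resulting space deformation-retracts onto a point — a fact established with no appeal to \cite{Deligne_72}. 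Granting this input, the descent to the $K(\pi,1)$ conjecture is pure covering-space theory, which is why the resulting argument is a genuinely new proof rather than a reformulation of Deligne's.
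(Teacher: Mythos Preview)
Your proposal is correct and follows essentially the same route as the paper: reduce everything to the contractibility of $\Stab^*(\cK_\Gamma)$ and then conclude via \cref{thm:covering} and elementary covering-space theory. The paper simply cites \cite[Theorem A]{QW_18} for contractibility rather than sketching a proof strategy, but the deductive structure is identical.
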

\begin{proof}
For $\Gamma$ of ADE type, the stability manifold $\Stab^*(\cK_\Gamma)$ is known to be contractible \cite[Theorem A]{QW_18}. As such, the covering in \cref{thm:covering} is actually a universal covering of the hyperplane complement.
Being contractible, it also proves the $K(\pi,1)$ conjecture for $\bbB(\Gamma)$.
\end{proof}
\begin{remark}
It is worth mentioning here that there are different triangulated categories (other than $\cK_\Gamma$) whose space of stability conditions can be used to study the $K(\pi,1)$ conjecture.
The triangulated category $\cK_\Gamma$ presented here is the most ``combinatorial'' one (in the author's very biased opinion), and other popular choices of triangulated categories include (some of which are related to ours via (dg)Koszul duality):
\begin{itemize}
    \item Preprojective algebras \cite{bridgeland2009kleinian,ikeda2014stability};
    \item Ginzburg (dg) algebras \cite{QW_18};
    \item Contraction algebras \cite{AW_22}.
\end{itemize}
The approach using contraction algebras \cite{AW_22} also allowed August--Wemyss to reprove the $K(\pi,1)$ conjecture for a few extra non-simply-laced cases: $I_2(m)$ for $m = 4,5,6$ and 8.
\end{remark}
\begin{remark}
For $\Gamma$ outside of ADE types (but simply-laced), Ikeda showed\footnote{Technically, there is a gap in Ikeda's argument, but it is fixable.} \cite{ikeda2014stability} that $\Stab(\cK_\Gamma)$ is a covering space of $S^1 \times (V^c_\Gamma/\bbW(\Gamma))$, where contractibility of $\Stab(\cK_\Gamma)$ would also prove the $K(\pi,1)$ conjecture.
For the affine case, see also \cite{bridgeland2009kleinian}.
\end{remark}
\begin{remark} \label{rem:nonsimplylaced}
The case of non-simply-laced $\Gamma$ can be approached via cutting out a nice (closed) submanifold of $\Stab^*(\Gamma)$ using fusion-equivariant stability conditions \cite{Heng_PhDthesis, DHL_fusionstab}.
See \cite[Theorem 5.5]{QZ_fusionstab} where they treat the case of (non-simply-laced) finite-type Coxeter diagrams.
\end{remark}

\appendix
\section{Refinement of filtrations and triangulations}\label{app:refinement}
The main purpose of this appendix is to introduce the octahedral axiom of a triangulated category via a dual presentation of exact triangles (where they really are triangles!), as introduced in \cite{dyckerhoff_kapranov_2018} -- the upshot is that under this dual presentation, the octahedral axiom corresponds to flips of triangulations.
Throughout we fix $\cD$ to be a triangulated category.

Let us start by introducing the dual presentation of exact triangles: 
\begin{center}
\begin{tikzpicture}[scale=1.4]
\tikzset{
  arrow/.pic={\path[tips,every arrow/.try,->,>=#1] (0.1,0) -- +(.1pt,0);},
  pics/arrow/.default={triangle 90}
}

\coordinate (X) at (0,0);
\coordinate (Y) at (2,0);
\coordinate (Z) at (1,1.732);

\begin{scope}[very thick,nodes={sloped,allow upside down}]
\draw[thick] (X) -- pic{arrow=latex} (Y) ;
\draw[thick] (X) -- pic{arrow=latex} (Z) ;
\draw[thick] (Z) -- pic{arrow=latex} (Y) ;
\end{scope}

\pic [draw, <-, swap, "$\alpha$", angle radius = 15, angle eccentricity=1.5] {angle=Y--X--Z};
\pic [draw, <-, swap, "$\beta$", angle radius = 15, angle eccentricity=1.5] {angle=Z--Y--X};
\pic [draw, <-, swap, dashed,"$\gamma$", angle radius = 15, angle eccentricity=1.5] {angle=X--Z--Y};

\filldraw[color=black!, fill=black!]  (0,0) circle [radius=1.5pt];
\filldraw[color=black!, fill=black!]  (2,0) circle [radius=1.5pt];
\filldraw[color=black!, fill=black!]  (1,1.732) circle [radius=1.5pt];

\node[left] at (0.5, 0.9) {$A$};
\node[below] at (1, 0) {$B$};
\node[right] at (1.5, 0.9) {$C$};

\node at (-3,1) {$A \xra{\alpha} B \xra{\beta} C \xra{\gamma} A[1] \qquad \leftrightsquigarrow$};
\end{tikzpicture}
\end{center}

Using the dual presentation, a filtration of $X$ by exact triangles:
\[
\begin{tikzcd}[column sep = 3mm]
0
	\ar[rr] &
{}
	{} &
X_1
	\ar[rr] \ar[dl]&
{}
	{} &	
X_2
	\ar[rr] \ar[dl]&
{}
	{} &	
\cdots
	\ar[rr] &
{}
	{} &
X_{m-1}
	\ar[rr] &
{}
	{} &
X_m = X
	\ar[dl] \\
{}
	{} &
A_1
	\ar[lu, dashed] &
{}
	{} &
A_2
	\ar[lu, dashed] &
{}
	{} &
{}
	{} &
{}
	{} &
{}
	{} &
{}
	{} &
A_m
	\ar[lu, dashed] &
\end{tikzcd}
\]
is presented as the following polygon (direction of morphisms implied by the orientation of the edges)
\begin{center}
\begin{tikzpicture}
\tikzset{
  arrow/.pic={\path[tips,every arrow/.try,->,>=#1] (0.1,0) -- +(.1pt,0);},
  pics/arrow/.default={triangle 90}
}

\coordinate (A) at (0,0);
\coordinate (B) at (2,0);
\coordinate (C) at (-1,1.732);
\coordinate (D) at (3,1.732);
\coordinate (E) at (0,3.5);
\coordinate (F) at (2,3.5);

\begin{scope}[very thick,nodes={sloped,allow upside down}]
\draw[thick] (A) -- pic{arrow=latex} (B) ;
\draw[thick] (A) -- pic{arrow=latex} (C) ;
\draw[thick] (A) -- pic{arrow=latex} (D) ;
\draw[thick] (A) -- pic{arrow=latex} (E) ;
\draw[thick] (A) -- pic{arrow=latex} (F) ;
\draw[thick] (D) -- pic{arrow=latex} (B) ;
\draw[thick] (C) -- pic{arrow=latex} (E) ;
\draw[thick] (E) -- pic{arrow=latex} (F) ;
\end{scope}

\filldraw[color=black!, fill=black!]  (A) circle [radius=1.5pt];
\filldraw[color=black!, fill=black!]  (B) circle [radius=1.5pt];
\filldraw[color=black!, fill=black!]  (C) circle [radius=1.5pt];
\filldraw[color=black!, fill=black!]  (D) circle [radius=1.5pt];
\filldraw[color=black!, fill=black!]  (E) circle [radius=1.5pt];
\filldraw[color=black!, fill=black!]  (F) circle [radius=1.5pt];

\node[left] at (-0.5, 0.9) {\scriptsize $0$};
\node[left] at (0, 1.8) {\scriptsize $X_1$};
\node[left] at (-0.5, 2.7) {\scriptsize $A_1$};
\node[above] at (0.8, 1.8) {\scriptsize $X_2$};
\node[above] at (1, 3.5) {\scriptsize $A_2$};
\node[above] at (1.5, 1) {\scriptsize $X_{m-1}$};
\node[below] at (1, 0) {\scriptsize $X_m=X$};
\node[right] at (2.5, 0.9) {\scriptsize $A_m$};
\node[right] at (1.5, 2) {$\ddots$};

\end{tikzpicture}
\end{center}
For our purposes, the triangulation of a polygon as above is called the \emph{standard triangulation}.
Note that the left most triangle can be ``absorbed'' into the edge $X_1$, since the morphism between $X_1$ and $A_1$ must be an isomorphism (see \cref{ex:exacttriangle}).

Now suppose each $A_i$ has a filtration by some $E^i_j$'s.
As above, thinking of the filtrations as polygons with the standard triangulation (with outer edges labelled by $E^i_j$'s), we can attach each of these polygons to the one above to obtain a single larger polygon.
Up to re-triangulation, this larger polygon would have the standard triangulation -- hence a filtration.

This is where the \emph{octahedral axiom} comes into play.
A ``standard presentation'' of the octahedral axiom will look as follows.
Suppose we have two morphisms $A_1 \xra{f} A_2$ and $A_2 \xra{g} A_3$ in $\cD$.
Then we can consider exact triangles associated to $f, g$ and the composition $gf$ ($A_{12}$, $A_{23}$ and $A_{13}$ are cones of $f$, $g$ and $gf$ respectively):
\[
A_1 \xra{f} A_2 \ra A_{12} \ra A_1[1], \quad A_2 \xra{g} A_3 \ra A_{23} \ra A_2[1], \quad A_1 \xra{gf} A_3 \ra A_{13} \ra A_1[1].
\]
Then there exists (not necessarily unique) morphisms $\psi_1, \psi_2, \psi_3$ in $\cD$ relating the three exact triangles as follows, where each part of the diagram is commutative:
\[\begin{tikzcd}
	{A_1} && {A_3} && {A_{23}} && {A_{12}[1]} \\
	& {A_2} && {A_{13}} && {A_2[1]} \\
	&& {A_{12}} && {A_1[1]}
	\arrow["f"', from=1-1, to=2-2]
	\arrow["g"', from=2-2, to=1-3]
	\arrow["gf", from=1-1, to=1-3]
	\arrow[from=1-3, to=1-5]
	\arrow[blue, "{\psi_3}"', dashed, from=1-5, to=1-7]
	\arrow[from=2-2, to=3-3]
	\arrow[from=1-3, to=2-4]
	\arrow[blue, "{\psi_1}"', dashed, from=3-3, to=2-4]
	\arrow[blue, "{\psi_2}"', dashed, from=2-4, to=1-5]
	\arrow[from=2-4, to=3-5]
	\arrow[from=1-5, to=2-6]
	\arrow[from=3-3, to=3-5]
	\arrow[from=3-5, to=2-6]
	\arrow[from=2-6, to=1-7]
\end{tikzcd}\]
In the dual presentation, the octahedral axiom implies that we can perform a triangulation flip on a square, switching from one triangulation to the other:
\begin{center}
\begin{tikzpicture}[scale=1.4]
\tikzset{
  arrow/.pic={\path[tips,every arrow/.try,->,>=#1] (0.1,0) -- +(.1pt,0);},
  pics/arrow/.default={triangle 90}
}

\coordinate (X) at (0,0);
\coordinate (Y) at (2,0);
\coordinate (Z) at (1,1.732);
\coordinate (W) at (1,-1.732);

\begin{scope}[very thick,nodes={sloped,allow upside down}]
\draw[thick] (X) -- pic{arrow=latex} (Y) ;
\draw[thick] (X) -- pic{arrow=latex} (Z) ;
\draw[thick] (Z) -- pic{arrow=latex} (Y) ;
\draw[thick] (X) -- pic{arrow=latex} (W) ;
\draw[thick] (Y) -- pic{arrow=latex} (W) ;
\end{scope}

\pic [draw, <-, swap, angle radius = 15, angle eccentricity=1.5, "$f$"] {angle=Y--X--Z};
\pic [draw, <-, swap, angle radius = 15, angle eccentricity=1.5] {angle=Z--Y--X};
\pic [draw, <-, swap, dashed, angle radius = 15, angle eccentricity=1.5] {angle=X--Z--Y};
\pic [draw, <-, swap, angle radius = 15, angle eccentricity=1.5, "$g$"] {angle=W--X--Y};
\pic [draw, <-, swap, angle radius = 15, angle eccentricity=1.5] {angle=Y--W--X};
\pic [draw, <-, swap, dashed, angle radius = 15, angle eccentricity=1.5] {angle=X--Y--W};

\filldraw[color=black!, fill=black!]  (0,0) circle [radius=1.5pt];
\filldraw[color=black!, fill=black!]  (2,0) circle [radius=1.5pt];
\filldraw[color=black!, fill=black!]  (1,1.732) circle [radius=1.5pt];
\filldraw[color=black!, fill=black!]  (1,-1.732) circle [radius=1.5pt];

\node[left] at (0.5, 0.9) {$A_1$};
\node[below] at (1, 0) {$A_2$};
\node[right] at (1.5, 0.9) {$A_{12}$};
\node[left] at (0.5, -0.9) {$A_3$};
\node[right] at (1.5, -0.9) {$A_{23}$};

\node at (3,0) {$\overset{\text{octahedral flip}}{\leftrightsquigarrow}$};

\coordinate (X) at (4,0);
\coordinate (Y) at (6,0);
\coordinate (Z) at (5,1.732);
\coordinate (W) at (5,-1.732);

\begin{scope}[very thick,nodes={sloped,allow upside down}]
\draw[thick] (Z) -- pic{arrow=latex} (W) ;
\draw[thick] (X) -- pic{arrow=latex} (Z) ;
\draw[thick] (Z) -- pic{arrow=latex} (Y) ;
\draw[thick] (X) -- pic{arrow=latex} (W) ;
\draw[thick] (Y) -- pic{arrow=latex} (W) ;
\end{scope}

\pic [draw, <-, swap, angle radius = 10, angle eccentricity=1.5, "\scriptsize$gf$"] {angle=W--X--Z};
\pic [draw, <-, swap, angle radius = 30, angle eccentricity=1.5, "\scriptsize$\psi_1$", blue] {angle=W--Z--Y};
\pic [draw, <-, swap, dashed, angle radius = 30, angle eccentricity=1.5] {angle=X--Z--W};
\pic [draw, <-, swap, angle radius = 25, angle eccentricity=1.5] {angle=Z--W--X};
\pic [draw, <-, swap, angle radius = 25, angle eccentricity=1.5, "\scriptsize$\psi_2$", blue] {angle=Y--W--Z};
\pic [draw, <-, swap, dashed, angle radius = 10, angle eccentricity=1.5, "\scriptsize$\psi_3$", blue] {angle=Z--Y--W};

\filldraw[color=black!, fill=black!]  (X) circle [radius=1.5pt];
\filldraw[color=black!, fill=black!]  (Y) circle [radius=1.5pt];
\filldraw[color=black!, fill=black!]  (Z) circle [radius=1.5pt];
\filldraw[color=black!, fill=black!]  (W) circle [radius=1.5pt];

\node[left] at (4.5, 0.9) {$A_1$};
\node[right] at (5, -0.2) {$A_{13}$};
\node[right] at (5.5, 0.9) {$A_{12}$};
\node[left] at (4.5, -0.9) {$A_3$};
\node[right] at (5.5, -0.9) {$A_{23}$};
\end{tikzpicture}
\end{center}
Since every triangulation can be achieved through a finite sequence of flips, we can always achieve the standard triangulation, as required.

\printbibliography

\end{document}